\newcommand{\sv}[1]{}%full version
 \newcommand{\lv}[1]{#1}%full version
 \newcommand{\toappendix}[1]{#1}%full version
\tikzstyle{legend_general}=[rectangle, rounded corners, thin,
\DeclareMathOperator{\col}{col}
\DeclareMathOperator{\ch}{ch}
\newtheorem{theorem}{Theorem}[section]
\newtheorem{proposition}[theorem]{Proposition}
\newtheorem{claim}[theorem]{Claim}
\newtheorem{lemma}[theorem]{Lemma}
\newtheorem{corollary}[theorem]{Corollary}
\theoremstyle{definition}
\newtheorem{definition}[theorem]{Definition}
\newtheorem{observation}[theorem]{Observation}
\newtheorem{question}[theorem]{Question}
\newcommand{\td}{\ensuremath{\sf{td}}}
\renewcommand{\epsilon}{\varepsilon}
\renewcommand{\phi}{\varphi}
\newcommand{\brm}[1]{\operatorname{#1}}
\title{Flexible List Colorings in Graphs with Special Degeneracy Conditions}\lv{\thanks{T.~Masařík did this work while he was a postdoc at Simon Fraser University in Canada. L.~Stacho was supported by NSERC grant R611368.\newline
\author{Peter Bradshaw}
\author{Tomáš Masařík}
\author{Ladislav Stacho}
\address{Simon Fraser University, Burnaby, BC, Canada}
\email{pabradsh@sfu.ca, masarik@kam.mff.cuni.cz, ladislav\_stacho@sfu.ca}
\begin{document}
\maketitle
\begin{abstract}
  For a given $\varepsilon > 0$, we say that a graph $G$ is \emph{$\epsilon$-flexibly $k$-choosable} if the following holds: for any assignment $L$ of color lists of size $k$ on $V(G)$, if a preferred color from a list is requested at any set $R$ of vertices, then at least $\epsilon |R|$ of these requests are satisfied by some $L$-coloring.
We consider 
the question of flexible choosability in several graph classes with certain degeneracy conditions.
We characterize the graphs of maximum degree $\Delta$ that are $\epsilon$-flexibly $\Delta$-choosable for some $\epsilon = \epsilon(\Delta) > 0$, which answers a question of Dvo\v{r}\'ak, Norin, and Postle [List coloring with requests, JGT 2019].
In particular, we show that for any $\Delta\geq 3$, any graph of maximum degree $\Delta$ that is not isomorphic to $K_{\Delta+1}$ is $\frac{1}{6\Delta}$-flexibly $\Delta$-choosable.
Our fraction of $\frac{1}{6 \Delta}$ is within a constant factor of being the best possible.
We also show that graphs of treewidth $2$ are $\frac{1}{3}$-flexibly $3$-choosable, answering a question of Choi et al.~[arXiv 2020], and we give conditions for list assignments by which graphs of treewidth $k$ are $\frac{1}{k+1}$-flexibly $(k+1)$-choosable. We show furthermore that graphs of treedepth $k$ are $\frac{1}{k}$-flexibly $k$-choosable. Finally, we introduce a notion of \emph{flexible degeneracy}, which strengthens flexible choosability, and we show that apart from a well-understood class of exceptions, 3-connected non-regular graphs of maximum degree $\Delta$ are flexibly $(\Delta - 1)$-degenerate.
\end{abstract}

\section{Introduction}
A \emph{proper coloring} of a graph $G$ is a function $\phi:V(G) \rightarrow S$ by which each vertex of $G$ receives a color from some color set $S$, such that no pair of adjacent vertices is assigned the same color.
Proper graph coloring is one of the oldest concepts in graph theory. 
The \emph{precoloring extension} problem is a related question which asks whether a graph can be properly colored using a given color set even when some vertices have preassigned colors. Surprisingly, the precoloring extension problem often has a negative answer, even for relatively simple graph classes and for a small number of precolored vertices~\cite{Tuza3,Marx06}.
In particular, it is NP-complete to decide whether an interval graph can be properly colored when only two vertices are precolored by different colors~\cite{Tuza1}.

In practical applications, proper graph coloring is often used to represent scheduling problems, in which case preassigned colors may be used to represent scheduling preferences or requests. Thus, the precoloring extension problem is not only an interesting concept in theory, but also has various practical applications, such as register allocation, scheduling, and many others (see~\cite{Tuza1} for an overview of basic precoloring extension applications).
However, in many applications that arise from graph coloring with requests, it is not always necessary to satisfy all coloring requests. With this in mind, Dvořák, Norin, and Postle~\cite{Dvorak} recently introduced a relaxed notion of the precoloring extension problem in which it is not mandatory to satisfy every coloring request and it is sufficient to satisfy a positive fraction of all coloring requests. They named this new concept \emph{flexibility}.

Dvořák, Norin, and Postle observed that for $k$-colorable graphs, the problem of finding a proper $k$-coloring that satisfies a positive fraction of some set of coloring requests is trivial, since by permuting the $k$ colors of any proper $k$-coloring, one can satisfy at least a fraction of $\frac{1}{k}$ of any set of coloring requests.
However, this approach does not work for list colorings, and thus the flexibility problem applied to list colorings becomes attractive and challenging.
A graph where each vertex $v$ has a list $L(v)$ of available colors is called \emph{$L$-colorable} if there exists a proper coloring in which each vertex $v$ receives a color from $L(v)$. We call such a coloring an \emph{$L$-coloring}.
A graph is \emph{$k$-choosable} if every assignment $L$ of at least $k$ colors to each vertex guarantees an $L$-coloring.
A graph class $\mathcal G$ is \emph{$k$-choosable} if every $G\in \mathcal G$ is $k$-choosable.
The \emph{choosability} of a graph $G$, written $\ch(G)$, is the minimum $k$ such that $G$ is $k$-choosable.

We now formally introduce the concept of flexibility.
A \emph{weighted request} on a graph $G$ with list assignment $L$ is a function $w$ that assigns a non-negative real number to each pair $(v,c)$ where $v\in V(G)$ and $c\in L(v)$.  
For $\varepsilon>0$, we say that $w$ is \emph{$\varepsilon$-satisfiable} if there exists an $L$-coloring $\varphi$ of $G$ such that
\[%
 \sum_{v\in V(G)} w(v,\varphi(v))\ge\varepsilon\cdot \sum\limits_{\substack{v\in V(G) \\ c\in L(v)}}
 w(v,c).
\]%
Then, we say that $G$ is \emph{weighted $\epsilon$-flexible} with respect to $L$ if every weighted request on $G$ is $\epsilon$-satisfiable.

We also consider an unweighted version of flexibility. We say that a \emph{request} on a graph $G$ with list assignment $L$ is a function $r$ with $\brm{dom}(r)\subseteq V(G)$ such that $r(v)\in L(v)$ for all $v\in\brm{dom}(r)$.\footnote{Note that if for all $v\in V(G)$, $w(v, c)\in\{0, 1\}$ holds for all $c\in L(v)$ and $w(v,c) = 1$ for at most one $c \in L(v)$, then a weighted request $w$ is actually a request.}
Analogously, for $\varepsilon>0$, we say that a request $r$ is \emph{$\varepsilon$-satisfiable} if there exists an $L$-coloring $\varphi$ of $G$ such that at least $\epsilon|\brm{dom}(r)|$ vertices $v$ in $\brm{dom}(r)$ receive the color $r(v)$. Then, we say that $G$ is $\epsilon$-flexible with respect to $L$ if every request on $G$ is $\epsilon$-satisfiable. In both the weighted and unweighted case, we call $\varepsilon$ the \emph{flexibility proportion} of $G$ with respect to $L$.

An interesting special case of unweighted flexibility, which was brought up recently in~\cite{choiclemen}, arises when each vertex of $G$ 
has exactly one preferred color, i.e.,
$\brm{dom}(r)=V(G)$.
We call such a request $r$ \emph{widespread}.
Analagously, we say that a graph $G$ with list assignment $L$ is \emph{weakly $\varepsilon$-flexible} if every widespread request is $\varepsilon$-satisfiable. 

If $G$ is $\varepsilon$-flexible for every list assignment with lists of size $k$, we say that $G$ is {\it $\varepsilon$-flexible for lists of size $k$.}
To simplify our terminology, we also often say that $G$ is {\it $\varepsilon$-flexibly $k$-choosable}. 
Analogously, if $\mathcal L$ is a set of assignments of color lists to $V(G)$, then we say that $G$ is {\it $\varepsilon$-flexibly $\mathcal L$-choosable} if $G$ is $\varepsilon$-flexible for every list assignment $L\in\mathcal L$.
For a graph class $\mathcal G$, we may omit $\varepsilon$ and say that $\mathcal G$ is {\it flexibly $k$-choosable} if there exists a universal constant $\varepsilon>0$ such that every graph in $\mathcal G$ is $\varepsilon$-flexibly $k$-choosable.
We will use this notation whenever we do not care about the precise value of the constant $\epsilon$ and only care that a constant $\epsilon > 0$ exists.
A meta-question which is central in the study of flexibility asks whether a given graph class is flexibly $k$-choosable.
We sometimes refer to this question simply as \textit{flexibility}.
Note that all the notation mentioned in this paragraph can be also stated for weak or weighted flexibility.

Typically, research in flexibility focuses on bounding the list size $k$ needed for a graph class to be $\epsilon$-flexibly $k$-choosable for some $\epsilon > 0$, and the precise value of $\epsilon$ is not usually of concern.
Apart from the original paper introducing flexibility~\cite{Dvorak}, where some basic results in terms of maximum average degree were established, the main focus in flexibility research has been on planar graphs. 
In particular, for many subclasses $\mathcal G$ of planar graphs, there has been a vast effort to reduce the gap between the choosability of $\mathcal G$ and the list size needed for flexibility in $\mathcal G$.
As of now, some tight bounds on list sizes are known:
namely, triangle-free\footnote{A graph $G$ is $\mathcal F$-free if $G$ does not contain any graph $F\in \mathcal F$ as a subgraph.} planar graphs~\cite{DvorakTriangle}, $\{C_4,C_5\}$-free planar graphs~\cite{Yang20}, and $\{K_4, C_5,C_6,C_7,B_5\}$-free\footnote{$B_5$ denotes the \emph{book} on $5$ vertices, which is the graph consisting of $3$ triangles sharing a common edge.} planar graphs~\cite{lidicky} are flexibly $4$-choosable, and planar graphs of girth 6~\cite{DvorakGirth6} are flexibly $3$-choosable.
For other subclasses $\mathcal G$ of planar graphs, an upper bound is known for the list size $k$ required for $\mathcal G$ to be flexibly $k$-choosable~\cite{choiclemen,Masarik}. 
However, these upper bounds are not known to be tight; see~\cite{choiclemen} for a comprehensive overview and a discussion of the related results.
The main question in this direction, of determining whether planar graphs are flexibly $5$-choosable, remains open.

\subsection{Our Results}
As discussed, the list size $k$ needed for a graph to be $\epsilon$-flexibly $k$-choosable for some $\epsilon > 0$ has a basic lower bound equal to the graph's choosability (the list size needed just for a proper list-coloring).
Similarly to list coloring, a graph's \emph{degeneracy} $d$, which is the largest minimum degree over all induced subgraphs, plays a natural role in establishing upper bounds on the list size needed for flexibility in a graph. 
However, while a simple greedy argument gives an upper bound of $d+1$ on the choosability, only the weaker upper bound of $d+2$ is currently known to hold for the list size needed for flexibility, as shown in~\cite{Dvorak}.
In the same paper, the authors ask whether an upper bound of $d+1$ can always be achieved---that is, whether $d$-degenerate graphs are flexibly $(d+1)$-choosable. However, answering this question seems to be out of reach with current knowledge.
Even for 2-degenerate graphs, the question seems rather tough, as it would imply the non-trivial result that planar graphs of girth $6$ are flexibly $3$-choosable, proven in \cite{DvorakGirth6}, as this class of graphs is $2$-degenerate.

In this direction, Dvo\v{r}\'ak et al.~\cite{Dvorak} asked a more specific question about non-regular\footnote{A non-regular graph is a graph that contains two vertices of different degrees.} graphs of bounded degree. 
A non-regular connected graph of maximum degree $\Delta$ is $(\Delta - 1)$-degenerate and therefore $\Delta$-choosable. With this in mind, Dvo\v{r}\'ak et al.\ asked the following question:

\begin{question}[\cite{Dvorak}]\label{q:non-regular}
  For each $\Delta \geq 2$, does there exist a value $\epsilon = \epsilon(\Delta) > 0$ such that any non-regular connected graph $G$ of maximum degree $\Delta$ is $\epsilon$-flexibly $\Delta$-choosable?
\end{question}

Later, in~\cite{choiclemen}, Choi et al.\ asked another specific question regarding degeneracy and flexibility.
Knowing that outer-planar graphs are $2$-degenerate, the authors asked:

\begin{question}[\cite{choiclemen}]\label{q:outer-planar}
  Are outer-planar graphs flexibly 3-choosable?
\end{question}

We will answer both questions in the affirmative.
\medskip

We dedicate Section~\ref{s:maxdeg} to solving Question~\ref{q:non-regular}.
We, in fact, show a stronger characterization for flexibility in connected graphs $G$ of maximum degree $\Delta$.
When $\Delta=2$, $G$ is flexibly $2$-choosable if and only if $G$ is a path (Theorem~\ref{thm:maxdeg2}). 
When $\Delta \geq 3$, $G$ is flexibly $\Delta$-choosable if and only if $G$ is not a $(\Delta + 1)$-clique (Theorem~\ref{thm:maxdeg3}).
In our proof, we use a seminal result by Erd\H{o}s, Rubin, and Taylor~\cite{ErdosDlist}, which characterizes graphs that can be list-colored whenever each vertex's color list has size equal to its degree. 
We prove theorems (Theorems~\ref{thm:weaker_lists_new} and \ref{thm:weaker_lists_general}) of a similar flavour that describes a sufficient condition for flexibility based on the degrees of the vertices in $G$.
Moreover, we provide an example (Figure~\ref{fig:weak_diamong}) that hints at which situations need to be avoided while aiming for a characterization of graphs that are flexibly choosable with lists of size equal to their vertex degree. 

We dedicate Section~\ref{s:tw} to solving Question~\ref{q:outer-planar}.
In fact, we prove the stronger statement that graphs of treewidth 2 are weighted flexibly 3-choosable (Theorem~\ref{thm:2tree}); hence, our result encompasses not only outer-planar graphs, but also series-parallel graphs and other graphs of treewidth $2$. 
Given a graph $G$ of treewidth $2$ with a $2$-treewidth decomposition, our method finds a list coloring on $G$ satisfying a fraction of $\frac{1}{3}$ of any weighted request in linear time. Furthermore, as a $k$-tree decomposition can be constructed in linear time for constant $k$~\cite{Bodlaender96}, our method therefore also runs in linear time. 
At the end of the section, we give a sufficient condition for lists of size $k+1$ that allow every weighted request on a $k$-tree to be $\frac{1}{k+1}$-satisfiable with respect to these lists (Theorem~\ref{thm:lambda_ch}). We state our result using a new concept of Zhu called $\lambda$-choosability~\cite{Zhu}.

Next, we append a short section (Section~\ref{s:td}) dedicated to the more restrictive graph parameter of treedepth. 
We show that graphs of treedepth $k$ are weighted $\frac{1}{k^2}$-flexibly $k$-choosable (Theorem~\ref{thm:td}).

In the last section (Section~\ref{s:degeneracy}), we propose a study of a new property stronger than flexibility that is motivated by degeneracy ordering.
We also explain relations between this property and a standard line of research concerning spanning trees with many leaves~\cite{Bonsma, Griggs}.
First, we give a standard definition of a \emph{$k$-degeneracy ordering}, which is an ordering of the vertices of a graph such that each vertex has at most $k$ neighbours appearing previously in the order.
Then, we say that a 
graph $G$ is \emph{$\epsilon$-flexibly $k$-degenerate} if for any subset $R \subseteq V(G)$, there exists a $k$-degeneracy ordering $D$ of $V(G)$ such that for at least $\epsilon|R|$ vertices $r \in R$, each neighbor of $r$ appears after $r$ in the ordering $D$.

For technical reasons, we will actually consider the slightly weaker notion of \emph{almost $\epsilon$-flexible $k$-degeneracy}, which we define formally in Section~\ref{s:degeneracy}, Definition~\ref{def:flexible_degeneracy}. 
We will show that for each $\Delta \geq 3$, there exists a value $\epsilon = \epsilon(\Delta) > 0$ such that if a non-regular graph $G$ of maximum degree $\Delta$ is 3-connected, then $G$ is almost $\epsilon$-flexibly $(\Delta-1)$-degenerate (Corollary~\ref{cor:degenerate}).

  We remark that in addition to Theorem~\ref{thm:2tree}, Theorem~\ref{thm:weaker_lists_new} can also be straightforwardly turned into a polynomial-time algorithm that finds a list coloring satisfying a positive fraction of coloring requests.
These algorithmic results can be compared with previous tools, which only give non-constructive proofs for the existence of an $\varepsilon$-satisifable coloring for an $\varepsilon$-flexible graph (as discussed in~\cite{Choi}).

\subsection{Preliminaries}
Let $G=(V,E)$ be a graph.
For an edge $e = \{uv\}\in E$, we say that a vertex $w\in V$ is adjacent to $e$ if $\{w u\}\in E$ or $\{w v\}\in E$.
We will always use $L$ to denote an assignment of color lists to each vertex of a graph $G$, and for a vertex $v \in V(G)$, we will use $L(v)$ to denote the list of colors assigned to $v$. 
We assume throughout the entire paper that all graphs are connected, as questions about colorings of a disconnected graph may be answered by analyzing each component separately.

We also define a slightly weaker version of weighted requests which we call \emph{uniquely weighted request}, where $w(v,c)$ can be nonzero only for at most one color $c\in L(v)$ for each vertex $v \in V(G)$.
This notion will be important only when we aim to calculate a specific (tight) value for our flexibility proportion, as for a general weighted request, one can disregard all but the largest weight request at each vertex while only losing an $|L(v)|$ factor in the computed flexibility proportion.
We formalize this idea in the following observation.
\begin{observation}\label{ob:unique}
  Let $G$ be a graph with list assignment $L$. 
  If every uniquely weighted request on $G$ is $\varepsilon$-satisfiable, then $G$ is weighted $\frac{\varepsilon}{\max_{v\in V(G)} |L(v)|}$-flexible with respect to $L$.
\end{observation}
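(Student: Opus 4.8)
The plan is to reduce an arbitrary weighted request to a uniquely weighted one by keeping, at each vertex, only its heaviest request, and then to invoke the hypothesis on that reduced request. First I would take any weighted request $w$ on $G$ and, for each $v\in V(G)$, fix a color $c_v\in L(v)$ achieving $w(v,c_v)=\max_{c\in L(v)}w(v,c)$. I would then define $w'$ by setting $w'(v,c_v)=w(v,c_v)$ and $w'(v,c)=0$ for all $c\neq c_v$, so that $w'$ is a uniquely weighted request satisfying $w'\le w$ pointwise.

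Next I would apply the hypothesis to $w'$: there exists an $L$-coloring $\varphi$ with
\[
\sum_{v\in V(G)} w'(v,\varphi(v))\ \ge\ \varepsilon\sum_{\substack{v\in V(G)\\ c\in L(v)}} w'(v,c)\ =\ \varepsilon\sum_{v\in V(G)} w(v,c_v).
\]
My goal would then be to show that this same $\varphi$ already $\frac{\varepsilon}{\max_{v\in V(G)}|L(v)|}$-satisfies $w$. For the numerator this is immediate from $w'\le w$, which gives $\sum_{v} w(v,\varphi(v))\ge\sum_{v} w'(v,\varphi(v))$. For the denominator I would use that $c_v$ is a maximizer, so that $w(v,c)\le w(v,c_v)$ for each of the at most $|L(v)|$ colors in $L(v)$; summing over $v$ yields
\[
\sum_{\substack{v\in V(G)\\ c\in L(v)}} w(v,c)\ \le\ \max_{v\in V(G)}|L(v)|\cdot\sum_{v\in V(G)} w(v,c_v).
\]
Combining the two displays would give $\sum_{v} w(v,\varphi(v))\ge\frac{\varepsilon}{\max_{v\in V(G)}|L(v)|}\sum_{v,c}w(v,c)$, which is exactly $\frac{\varepsilon}{\max_{v\in V(G)}|L(v)|}$-satisfiability of $w$; since $w$ was arbitrary, this would establish the claimed weighted flexibility.

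I do not expect a serious obstacle here, as the argument is essentially bookkeeping. The only point requiring care is the direction of the inequalities: the reduction can only decrease the numerator (we satisfy the underestimate $w'$), while the factor $\max_{v\in V(G)}|L(v)|$ is precisely the loss incurred in the denominator when the full weight of $w$ is bounded by the weight concentrated at the per-vertex maxima. Verifying that denominator estimate, where the $|L(v)|$ factor enters, is the crux of the computation.
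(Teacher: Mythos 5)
Your proof is correct and is exactly the argument the paper intends: the observation is stated without a formal proof, but the preceding sentence (``one can disregard all but the largest weight request at each vertex while only losing an $|L(v)|$ factor'') is precisely your reduction to the per-vertex maxima, and your denominator estimate $\sum_{v,c} w(v,c)\le \max_{v\in V(G)}|L(v)|\cdot\sum_v w(v,c_v)$ formalizes where that factor enters. Nothing is missing.
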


We will make use of a lemma from Dvořák, Norin, and Postle~\cite{Dvorak} that serves as a useful tool and is easy to prove.
This lemma tells us that in order to show weighted $\epsilon$-flexibility in a graph $G$, we do not need to consider every possible request, and it is enough to find a distribution on colorings such that each individual vertex $v \in V(G)$ is colored by a given color $c \in L(v)$ with probability at least $\epsilon$.
\begin{lemma}[Lemma~3 in~\cite{Dvorak}]
\label{lem:Probability}
Let $G$ be a graph with a list assignment $L$.
Suppose there exists a probability distribution on $L$-colorings $\varphi$ of $G$ such that for every $v\in V(G)$ and $c\in L(v)$, 
\[
\Pr[\varphi(v)=c]\ge\varepsilon.
\]%
Then $G$ is weighted $\varepsilon$-flexible with respect to $L$.
\end{lemma}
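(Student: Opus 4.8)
The plan is to prove this by a first-moment (averaging) argument. Fix an arbitrary weighted request $w$ on $G$; the goal is to exhibit a single $L$-coloring that collects at least an $\varepsilon$-fraction of the total requested weight. Rather than constructing such a coloring by hand, I would draw $\varphi$ from the promised distribution on $L$-colorings and bound the \emph{expected} satisfied weight from below, then extract a good coloring from the support.

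First I would compute the expected satisfied weight using linearity of expectation. Writing the satisfied weight of a coloring as $\sum_{v \in V(G)} w(v,\varphi(v))$, I obtain
\[
\mathbb{E}\Big[\sum_{v \in V(G)} w(v,\varphi(v))\Big] = \sum_{v \in V(G)} \sum_{c \in L(v)} w(v,c)\,\Pr[\varphi(v)=c].
\]
The structural point that makes the inner sum range only over $c \in L(v)$ is that every coloring in the support of the distribution is an $L$-coloring, so $\varphi(v) \in L(v)$ always.

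Next I would apply the two hypotheses termwise. Since each weight $w(v,c)$ is non-negative and the distribution satisfies $\Pr[\varphi(v)=c] \ge \varepsilon$, every summand can be bounded below, yielding
\[
\sum_{v \in V(G)} \sum_{c \in L(v)} w(v,c)\,\Pr[\varphi(v)=c] \ge \varepsilon \sum_{v \in V(G)} \sum_{c \in L(v)} w(v,c).
\]
Thus the expected satisfied weight is at least $\varepsilon$ times the total requested weight. Because a real-valued random variable attains a value at least its expectation with positive probability, there exists at least one $L$-coloring $\varphi$ in the support for which $\sum_{v} w(v,\varphi(v)) \ge \varepsilon \sum_{v,c} w(v,c)$. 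This $\varphi$ witnesses that $w$ is $\varepsilon$-satisfiable, and since $w$ was arbitrary, $G$ is weighted $\varepsilon$-flexible with respect to $L$.

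I do not expect any serious obstacle here; the whole argument is a single clean averaging step. The only two points requiring care are that the non-negativity of $w$ is precisely what allows the expectation to be bounded below termwise (so that the inequality points in the correct direction), and that the extracted coloring is automatically a proper $L$-coloring because the entire support of the distribution consists of $L$-colorings—hence no repair step is needed once a coloring meeting the expectation bound is selected.
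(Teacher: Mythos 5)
Your proof is correct and follows essentially the same route the paper uses: the paper cites this lemma from Dvo\v{r}\'ak, Norin, and Postle, but its proof of the companion Lemma~\ref{lem:Probability2} is exactly your argument---linearity of expectation, termwise application of $\Pr[\varphi(v)=c]\ge\varepsilon$ using non-negativity of $w$, and extraction of a coloring attaining at least the expected satisfied weight.
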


\section{Graphs of Bounded Degree}\label{s:maxdeg}
In this section, we will investigate which graphs of maximum degree $\Delta$ are flexibly $\Delta$-choosable, for all integers $\Delta \geq 2$. 
While every non-regular graph of maximum degree $\Delta$ is $\Delta$-choosable, the complete graph $K_{\Delta+1}$ shows that not every $\Delta$-regular graph is $\Delta$-choosable.
In~\cite{ErdosDlist}, Erd\H{o}s, Rubin, and Taylor give a complete characterization of $\Delta$-regular graphs $G$ with $\ch(G) = \Delta$ and show that $K_{\Delta +1}$ is in fact the only $\Delta$-regular graph that is not $\Delta$-choosable. 
Furthermore, the authors give a characterization of the more general notion of \emph{degree choosability}, which is defined as follows. 
We say that a graph $G$ is \emph{degree choosable} if $G$ can be list colored for any assignment $L$ of lists such that $|L(v)|\geq \deg(v)$ for all $v\in V(G)$.
Erd\H{o}s, Rubin, and Taylor's characterization of degree choosable graphs is given in terms of the \emph{blocks} of a graph, where a block of a graph $G$ is defined as a maximal connected subgraph of $G$ with no cut-vertex.
By this definition, a block of a graph $G$ is either a cut-edge or a $2$-connected subgraph. 
The characterization of degree choosable graphs is as follows. 
Recall that in this paper we only consider connected graphs.

\begin{theorem} [\cite{ErdosDlist}]
A graph $G$ is degree choosable if and only if $G$ contains some block that is not a clique and is not an odd cycle.
\label{thmErdos}
\end{theorem}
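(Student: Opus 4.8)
The plan is to prove the logically equivalent reformulation: a (connected) graph $G$ fails to be degree choosable \emph{precisely} when every block of $G$ is a clique or an odd cycle, i.e.\ when $G$ is what is usually called a \emph{Gallai tree}. I would establish the two implications separately. The reverse implication (a Gallai tree is not degree choosable) is the routine direction and amounts to exhibiting a single bad list assignment; the forward implication (a block that is neither a clique nor an odd cycle forces degree choosability) is the heart of the argument and is where I expect the real difficulty to lie.

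For the easy direction I would build an uncolorable assignment $L$ with $|L(v)|=\deg(v)$ by induction over the block--cut tree of $G$. The two base configurations are the only ones that occur: a clique $K_n$, where giving every vertex the common list $\{1,\dots,n-1\}$ of size $n-1=\deg(v)$ admits no proper coloring since $n$ mutually adjacent vertices cannot be colored from $n-1$ colors; and an odd cycle, which carries a well-known size-$2$ list assignment with no proper coloring (odd cycles are not $2$-choosable). To merge blocks at a cut vertex $w$ whose degree splits as $\deg_G(w)=\sum_B \deg_B(w)$ over the blocks $B$ meeting $w$, I would partition $L(w)$ into parts of the appropriate local sizes and carry an inductive invariant of the form ``each pendant subtree can force its root cut vertex into a unique admissible color,'' so that the forced colors propagate up the tree and leave the topmost block uncolorable. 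One checks the invariant on the two base cases (for a clique this is a counting argument; for an odd cycle it is the standard rigidity of its $2$-list colorings), and the merge step is bookkeeping on list sizes.

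For the hard direction, let $B$ be a block that is neither a clique nor an odd cycle; I would first reduce the whole problem to degree choosability of the $2$-connected graph $B$ alone. Fix a root $b\in V(B)$, take a BFS tree of $G$ from $b$, and color $V(G)\setminus V(B)$ greedily in order of \emph{decreasing} distance from $b$. Each such vertex $u$ has its BFS parent (strictly closer to $b$) still uncolored when $u$ is reached, so $u$ has at most $\deg_G(u)-1$ colored neighbors and a color survives in its list of size $\ge\deg_G(u)$. After this phase every $v\in V(B)$ has all of its neighbors outside $B$ colored; since there are $\deg_G(v)-\deg_B(v)$ of them, the residual list at $v$ has size at least $|L(v)|-(\deg_G(v)-\deg_B(v))\ge\deg_B(v)$. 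Hence it suffices to color $B$ from residual lists of size $\ge\deg_B$, which is exactly degree choosability of $B$.

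It remains to prove that a $2$-connected graph $B$ that is neither complete nor an odd cycle is degree choosable, and this is the crux. If $B$ is a cycle it is an even cycle, which is $2$-choosable, so I may assume $B$ has a vertex of degree at least $3$. I would then invoke the standard Brooks-type structural lemma that such a $B$ contains a vertex $v$ with two non-adjacent neighbors $x,y$ for which $B-\{x,y\}$ stays connected, order $V(B)$ as $x,y$, then a reverse-BFS ordering of $B-\{x,y\}$ rooted so that $v$ comes last, and color greedily: every middle vertex has a later (uncolored) neighbor toward $v$, and at the final vertex $v$ one tries to exploit that $x$ and $y$ can receive a common color, so that $v$ sees at most $\deg_B(v)-1$ colors and a color survives. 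The step I expect to be the main obstacle is precisely the case $L(x)\cap L(y)=\varnothing$, where $x$ and $y$ are forced to differ and the naive slack at $v$ disappears; resolving it requires either a more careful choice of the configuration $(v,x,y)$ relative to the given lists or an induction on $|V(B)|$ (as in the original argument), and this delicate sub-case is the only genuinely nontrivial part of the whole proof.
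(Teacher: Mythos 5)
Your proposal cannot be compared against an in-paper proof, because the paper does not prove Theorem~\ref{thmErdos} at all: it is quoted as a black box from Erd\H{o}s, Rubin, and Taylor \cite{ErdosDlist}, and the only argument the paper itself supplies in this vicinity is the short greedy proof of Corollary~\ref{corErdos}. Measured against the classical proof, two of your three stages are essentially right. Your reduction of the hard direction to the $2$-connected block $B$ (color $V(G)\setminus V(B)$ farthest-first so that every vertex keeps an uncolored neighbor toward $b$, then observe each $v\in V(B)$ retains a residual list of size at least $\deg_B(v)$) is correct, and is in fact the same surplus-greedy device the paper uses for Corollary~\ref{corErdos}. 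Your easy direction is also the standard construction, with one slip in the stated invariant: a pendant block with its private palette does not force its cut vertex $w$ \emph{into a unique color}; it forces $w$ to \emph{avoid} the palette share (for a pendant $K_m$ whose $m-1$ non-cut vertices exhaust the share, and for a pendant odd cycle whose $2$-palette is rigid). With disjoint palettes across blocks, ``forbid the share'' is exactly what makes the list sizes telescope to $\deg(v)$ and leaves the top block uncolorable; the ``unique color'' version is both false and unnecessary.

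The genuine gap is the one you flag yourself: the crux sub-case $L(x)\cap L(y)=\emptyset$ in the Lov\'asz configuration $(v;x,y)$ is where the theorem actually lives, and deferring it to ``a more careful choice of $(v,x,y)$ or the original argument'' leaves the statement unproven --- no choice of the triple can help in general, since an adversary may assign pairwise disjoint lists to all non-adjacent pairs. The missing step can be supplied by the following standard dichotomy, which would complete your plan. If some edge $uv$ of $B$ admits a color $c\in L(u)\setminus L(v)$, color $u$ with $c$ and delete it: by $2$-connectivity $B-u$ is connected, every remaining vertex $w$ satisfies $|L'(w)|\ge\deg_{B-u}(w)$, and $v$ now has strict surplus $|L'(v)|=\deg_B(v)>\deg_{B-u}(v)$, so your own farthest-first greedy (run toward $v$) finishes. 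Otherwise $L(u)=L(v)$ across every edge, hence by connectivity all lists are one common set; since $|L(v)|=\deg(v)$ for all $v$, the graph $B$ is regular with a single list of size equal to its degree, and degree choosability becomes precisely Brooks' theorem, which holds because $B$ is $2$-connected and neither complete nor an odd cycle. Without this reduction (or Erd\H{o}s--Rubin--Taylor's original induction via theta-subgraphs), the heart of the forward implication is absent from your write-up.
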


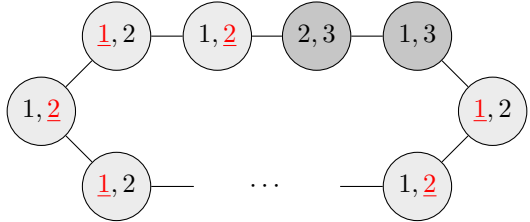
\begin{figure}
  \begin{center}
\begin{tikzpicture}
[scale=2,auto=left,every node/.style={circle,fill=gray!15}]
\node (a) at (0.33,1) [draw = black, fill = gray!45] {$2,3$};
\node (a2) at (-0.33,1) [draw = black] {$1,\textcolor{red}{\underline{2}}$};
\node (b1) at (0.4,0) [fill = white] {};
\node (b2) at (-0.4,0) [fill = white] {};
%\node (b3) at (-1.8,0) [draw = black] {};
%\node (b4) at (-1.4,0) [draw = black] {};

\node(dots) at (0,0) [fill = white] {$\cdots$};

\node (bb) at (1.5,0.5) [draw = black] {$\textcolor{red}{\underline{1}},2$};
\node (bbb) at (-1.5,0.5) [draw = black] {$1,\textcolor{red}{\underline{2}}$};
\node(c) at (1,1) [draw = black, fill = gray!45] {$1,3$};

\node(d) at (1,0) [draw = black] {$1,\textcolor{red}{\underline{2}}$};

\node(e) at (-1,1) [draw = black] {$\textcolor{red}{\underline{1}},2$};%1.495,1.62

\node(f) at (-1,0) [draw = black] {$\textcolor{red}{\underline{1}},2$};

    %b1/b3,b1/b4,b3/b4,b3/b2,b4/b2
    %,d2/b1,b2/f2
\foreach \from/\to in {a/c,a/a2,c/bb,bb/d,d/b1,b2/f,f/bbb,bbb/e,e/a2}
    \draw (\from) -- (\to);

\end{tikzpicture}
\end{center}
\caption{The graph $G$ in the figure is an even cycle with color lists of size two. However, not even a single one of the red underlined requests can be satisfied.
  Therefore, the class of even cycles is not weakly flexibly $2$-choosable.}
\label{fig:cycle}
\end{figure}

The question of whether there exists a characterization of graphs that are flexibly degree choosable is open. However, we may straightforwardly show that Theorem \ref{thmErdos} does not give a characterization of flexible degree choosability. Indeed, Theorem \ref{thmErdos} implies that a $2$-regular graph $G$ is $2$-choosable if and only if $G$ is an even cycle. However, the following theorem, which characterizes flexible $2$-choosability in graphs of maximum degree $2$, shows that in general, cycles are not flexibly $2$-choosable.

\begin{theorem}\label{thm:maxdeg2}
Let $G$ be a graph of maximum degree $2$. Then $G$ is weakly flexibly $2$-choosable if and only if $G$ is a path. 
Furthermore, if $G$ is a path, $G$ is weighted $\frac{1}{2}$-flexibly $2$-choosable.
\end{theorem}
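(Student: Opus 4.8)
The plan is to use the fact that a connected graph of maximum degree $2$ is either a path or a cycle, and to treat the two implications separately. For the ``if'' direction (which simultaneously establishes the \emph{Furthermore} clause) I would prove the strongest statement, weighted $\frac12$-flexibility, by appealing to Lemma~\ref{lem:Probability}: it suffices to exhibit, for every assignment $L$ of lists of size exactly $2$ to the path $v_1 v_2 \cdots v_n$, a probability distribution on proper $L$-colorings under which every vertex receives each of its two colors with probability \emph{exactly} $\frac12$. For the ``only if'' direction I would show that cycles admit no uniform positive flexibility proportion.

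For the path, I would build the distribution as a left-to-right Markov chain: color $v_1$ uniformly at random from $L(v_1)$, and then choose $\varphi(v_{i+1}) \in L(v_{i+1}) \setminus \{\varphi(v_i)\}$ according to transition probabilities designed to preserve the invariant that the current vertex is colored $\frac12$--$\frac12$ on its two colors. The key observation is that this invariant can always be maintained, by a short case analysis on the intersection $L(v_i) \cap L(v_{i+1})$: when the lists are disjoint, both colors of $v_{i+1}$ are available and I choose uniformly; when the lists are equal, the color of $v_{i+1}$ is forced to alternate; and when the lists share exactly one color $s$, I route deterministically, sending the shared color $s$ to its unique available successor and sending the non-shared color of $v_i$ back to $s$. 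A direct computation shows each rule turns a balanced marginal at $v_i$ into a balanced marginal at $v_{i+1}$, so by induction every vertex is balanced; since each transition avoids the predecessor's color and a path has no other adjacencies, the measure is supported on proper $L$-colorings, and Lemma~\ref{lem:Probability} yields $\varepsilon = \frac12$.

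For the cycle I would split into two cases. An odd cycle is not even $2$-choosable (by Theorem~\ref{thmErdos}, or directly via the all-$\{1,2\}$ lists), so there is a list assignment admitting no $L$-coloring at all, and hence no request is satisfiable. For an even cycle I would generalize the list assignment of Figure~\ref{fig:cycle}: placing the lists $\{2,3\}$ and $\{1,3\}$ on two adjacent vertices and $\{1,2\}$ on the remaining $2n-2$ vertices forces the entire $\{1,2\}$-chain into a single alternating pattern in every proper $L$-coloring, so requesting the opposite color along the chain leaves at most the two special vertices satisfiable. The satisfiable proportion is thus at most $\frac{2}{2n}\to 0$, so no universal $\varepsilon>0$ works for the family of even cycles, which is exactly what fails weak flexible $2$-choosability here.

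The main obstacle is the path construction, specifically the shared-one-color case: the naive greedy rule of choosing uniformly among the available colors at each step does \emph{not} keep the marginals balanced (on $v_1 v_2$ with lists $\{1,2\}$ and $\{2,3\}$ it gives color $2$ probability only $\frac14$ at $v_2$), so the transitions must be correlated with---and sometimes deterministically chosen from---the predecessor's color in order to compensate for the forcing that occurs when the shared color is used. Verifying that the balancing rule is consistent in all three cases, and that the resulting measure is genuinely a distribution on proper colorings, is the technical heart of the argument; the cycle direction is then comparatively routine.
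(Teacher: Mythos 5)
Your proposal is correct and takes essentially the same route as the paper: for paths, the paper exhibits two $L$-colorings $\phi_1,\phi_2$ with $\{\phi_1(v),\phi_2(v)\}=L(v)$ and applies Lemma~\ref{lem:Probability}, and your balanced Markov-chain transitions are precisely the ``straightforward'' inductive construction behind that claim, with the correct observation that naive uniform greedy choices fail. For the converse you likewise split into odd cycles (not $2$-choosable) and even cycles, generalizing exactly the list assignment of Figure~\ref{fig:cycle} to force the $\{1,2\}$-chain and defeat any universal $\varepsilon>0$, matching the paper's argument.
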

\begin{proof}
If $G$ is a path, then it is straightforward to show that there exist two list colorings $\phi_1, \phi_2$ on $G$ such that for each $v \in V(G)$, $L(v) = \{\phi_1(v), \phi_2(v)\}$. Then $G$ is weighted $\frac{1}{2}$-flexibly $2$-choosable by Lemma \ref{lem:Probability}. If $G$ is not a path, then $G$ is a cycle. If $G$ is an odd cycle, then $G$ is not $2$-choosable. If $G$ is an even cycle, then the color list assignment in Figure \ref{fig:cycle} shows that no positive fraction of coloring requests on $V(G)$ can be satisfied, even when a color is requested at every vertex of $G$.
\end{proof}

Thus, we see that Theorem~\ref{thmErdos} does not characterize graphs that are flexibly degree choosable, and the question of which graphs are flexibly degree choosable is open. 
However, we will give a complete characterization of graphs of maximum degree $\Delta \geq 3$ that are flexibly $\Delta$-choosable, which is a step toward characterizing flexible degree choosability. 
We will show that for a graph $G$ of maximum degree $\Delta \geq 3$, if $G \not \cong K_{\Delta + 1}$, then $G$ is flexibly $\Delta$-choosable. 
As $K_{\Delta+ 1}$ is not $\Delta$-choosable, this gives a complete characterization of the graphs of maximum degree $\Delta$ that are flexibly $\Delta$-choosable.
With our characterization, we answer Question~\ref{q:non-regular}.
First, it will be convenient to establish a corollary of Theorem \ref{thmErdos}.
\begin{corollary}
\label{corErdos}
Let $G$ be a graph, and let $L$ be a list assignment such that for each $v \in V(G)$, $|L(v)| \geq \deg(v)$. Then $G$ has an $L$-coloring if and only if either $G$ contains some block that is not a clique and is not an odd cycle, or $G$ contains a vertex $v$ for which $|L(v)| > \deg(v)$.
\end{corollary}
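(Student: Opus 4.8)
The plan is to derive the corollary directly from Theorem~\ref{thmErdos}, handling the two directions of the equivalence separately. The substantive direction is the ``if'' direction: assuming that either $G$ has a block that is neither a clique nor an odd cycle, or some vertex carries a list strictly larger than its degree, I would produce an $L$-coloring, splitting into two cases according to which clause holds.

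In the first case, where $G$ contains a block that is not a clique and is not an odd cycle, Theorem~\ref{thmErdos} says precisely that $G$ is degree choosable; since the hypothesis guarantees $|L(v)|\ge\deg(v)$ for every $v$, an $L$-coloring exists at once. The second case, where some vertex $v_0$ satisfies $|L(v_0)|>\deg(v_0)$, is where the only genuine work lies, and I would handle it by a greedy argument anchored at $v_0$. I would choose any spanning tree $T$ of $G$, root it at $v_0$, and color the vertices in an order that processes each vertex before its parent in $T$ (leaves first, $v_0$ last). When a vertex $v\neq v_0$ is about to be colored, its parent in $T$ is a neighbor of $v$ in $G$ that is still uncolored, so at most $\deg(v)-1<|L(v)|$ colors are forbidden and a valid color remains; when $v_0$ is colored last, at most $\deg(v_0)<|L(v_0)|$ colors are forbidden, so again a color is available. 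This completes the ``if'' direction.

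For the converse I would argue by contraposition and appeal again to Theorem~\ref{thmErdos}: if $G$ has no block that is simultaneously not a clique and not an odd cycle, then every block of $G$ is a clique or an odd cycle, so $G$ is not degree choosable, and lists of size exactly $\deg(v)$ cannot by themselves force a coloring. This shows that the two clauses on the right-hand side are exactly the circumstances that guarantee an $L$-coloring. The main obstacle is really the bookkeeping in the greedy step of the second case: one must root the spanning tree at the slack vertex $v_0$ so that every other vertex retains an uncolored neighbor at the moment it is colored, which is precisely what lets the single unit of slack at $v_0$ propagate through the entire ordering. Apart from this orientation issue the argument is routine, as Theorem~\ref{thmErdos} supplies all of the structural input.
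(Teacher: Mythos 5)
Your proof is correct and takes essentially the same approach as the paper: the block condition is dispatched directly by Theorem~\ref{thmErdos}, and the slack case by a greedy coloring in which every vertex other than $v_0$ still has an uncolored neighbor when it is processed---the paper's ordering by decreasing distance from $v_0$ is exactly your leaves-first ordering of a spanning tree rooted at $v_0$. Your converse also mirrors the paper's one-line appeal to Theorem~\ref{thmErdos}; note that, like the paper, this implicitly reads the corollary as a statement about the whole family of degree-sized list assignments rather than one fixed $L$ (for a fixed $L$ on a Gallai tree an $L$-coloring may still exist, e.g.\ $K_3$ with lists $\{1,2\},\{1,3\},\{2,3\}$), but since the paper does exactly the same, this is not a gap in your argument.
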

\toappendix{
\begin{proof}[Proof of Corollary~\ref{corErdos}]
If $|L(v)| = \deg(v)$ for every $v \in V(G)$, then the statement follows from Theorem \ref{thmErdos}. If there exists a vertex $v \in V(G)$ with $|L(v)| > \deg(v)$, then we may greedily color the vertices of $G$ in order of decreasing distance from $v$. In this process, each vertex $w \neq v$ will be colored before all neighbors of $w$ are colored, so as $|L(w)| \geq \deg(w)$, $w$ will have an available color. Furthermore, as $|L(v)| > \deg(v)$, $v$ will also have an available color. Therefore, $G$ is $L$-choosable.
\end{proof}
}
We are now ready to characterize graphs of maximum degree $\Delta \geq 3$ that are flexibly $\Delta$-choosable.

\begin{theorem}\label{thm:maxdeg3}
Let $G$ be a graph of maximum degree $\Delta \geq 3$. 
If $G \not \cong K_{\Delta+1}$, then $G$ is $\frac{1}{6 \Delta}$-flexibly $\Delta$-choosable, and $G$ is also weighted $\frac{1}{2 \Delta^4}$-flexibly $\Delta$-choosable.
\end{theorem}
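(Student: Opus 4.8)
The plan is to satisfy requests by \emph{fixing} selected requested vertices to their requested colors, deleting them, and extending the resulting partial coloring to the rest of the graph through Corollary~\ref{corErdos}. The engine is a robustness property of the degree condition: if $S$ is an \emph{independent} set and we pre-color each $s\in S$ by its request, then the reduced lists $L'(u)=L(u)\setminus\{r(s):s\in S\cap N(u)\}$ on $G-S$ satisfy $|L'(u)|\ge \Delta-|S\cap N(u)|\ge \deg_{G-S}(u)$ for every $u\notin S$, with strict inequality whenever $\deg_G(u)<\Delta$. Hence $G-S$ meets the hypothesis of Corollary~\ref{corErdos}, and each of its components is $L'$-colorable \emph{unless} that component is a clique or an odd cycle on which all of these inequalities are tight; I will call such a component \emph{blocking}. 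Every vertex of a blocking component has degree exactly $\Delta$ in $G$ and has all of its outside neighbors in $S$; since $G$ is connected and $G\not\cong K_{\Delta+1}$, a blocking component cannot be all of $G$, so it sends at least one edge into $S$.

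For the unweighted bound I would first extract an independent set $I\subseteq\brm{dom}(r)$ with $|I|\ge |\brm{dom}(r)|/(\Delta+1)\ge |\brm{dom}(r)|/(2\Delta)$, which exists greedily because $G[\brm{dom}(r)]$ has maximum degree at most $\Delta$. It then suffices to fix at least a third of $I$, since fixing $|I|/3$ requests satisfies at least $|\brm{dom}(r)|/(6\Delta)$ of them. I would tentatively fix all of $I$; if no blocking component appears we are done, and otherwise I would \emph{repair} each blocking component $C$ by un-fixing one vertex of $I$ adjacent to $C$, after which (using $\Delta\ge 3$ and $G\not\cong K_{\Delta+1}$ to exclude the degenerate tight configurations) the enlarged piece is colorable by Corollary~\ref{corErdos}. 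The delicate point, and the main obstacle, is the charging argument showing that the total number of un-fixed vertices is at most $\tfrac23|I|$: a blocking component cannot route all of its boundary edges to a single requested vertex, as that would force a $K_{\Delta+1}$, so each repair can be blamed on several fixed-and-kept vertices, and it is this constraint that yields the constant fraction.

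For the weighted bound I would reduce to uniquely weighted requests via Observation~\ref{ob:unique}, at the cost of a factor $\Delta$, and then apply Lemma~\ref{lem:Probability}: it is enough to build a probability distribution on $L$-colorings under which every vertex receives its designated color with probability at least $\tfrac1{2\Delta^3}$. I would obtain such a distribution by randomizing the fixing scheme above—selecting a random independent subset of the heavy vertices, fixing it to its requests, repairing blocking components, and extending. A greedy heavy independent set carries at least a $\tfrac1{2\Delta}$ fraction of the total weight, and a fixed target survives both the random selection and the subsequent repair with probability $\Omega(1/\Delta^2)$; together with the conversion of Observation~\ref{ob:unique} this accounts for the factor $\tfrac1{2\Delta^4}$.

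Throughout, the crux is the same phenomenon: once requested vertices are fixed and removed, the leftover graph may be tight on a clique or an odd cycle and therefore uncolorable, and the whole argument turns on repairing these blocking components while sacrificing only a bounded fraction of the fixed requests. The hypothesis $G\not\cong K_{\Delta+1}$ enters precisely here, both to prevent a blocking component from exhausting the graph and to prevent its boundary from collapsing onto a single requested vertex; making the repair accounting tight enough to deliver the stated fractions at each stage is the part I expect to require the most care.
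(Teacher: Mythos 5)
Your architecture is the paper's architecture: an independent set of requests, fix-and-delete, extension via Corollary~\ref{corErdos}, the notion of a blocking component (the paper's ``bad component''), repair by un-fixing, and $G \not\cong K_{\Delta+1}$ invoked exactly where you invoke it. However, in both halves the proposal stops precisely where the paper's proofs start doing real work. For the unweighted bound, the claim that at most $\frac{2}{3}|I|$ vertices get un-fixed does not follow from ``a blocking component cannot route all its boundary edges to a single vertex.'' Count edges: every bad component sends at least $\Delta$ edges into $I$ (exactly $\Delta$ for $K_1$ and $K_\Delta$, at least $2\Delta-2$ otherwise), while each vertex of $I$ has at most $\Delta$ incident edges, so the number $\beta$ of blocking components can be as large as $|I|$ itself. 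One repair per component, chosen independently, can therefore consume essentially all of $I$; knowing each component has at least two $I$-neighbors only lets you blame each repair on two candidates, both of which may themselves be un-fixed on behalf of other components. There is also a dynamical subtlety you elide: un-fixing $r$ merges the components around it, and the merged component can again be bad; the paper's Claim~\ref{cl:makeGood} shows the count still drops by at least one, but extracting a constant fraction requires the two-phase argument of Theorem~\ref{thm:weaker_lists_new} — first the process \textbf{(P)}, which exhaustively un-fixes vertices killing at least two bad components (Claim~\ref{cl:alpha} handles the case where this phase is large), and then, once \textbf{(P)} is exhausted, the case analysis of Claim~\ref{cl:structure} feeding a discharging argument on the bipartite incidence graph $H$, which yields $|R''| \geq \frac{5}{4}|D|$. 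Only that combination produces the $\frac{1}{6}$ fraction; your blame heuristic, as stated, does not.

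For the weighted bound the gap is of the same kind but sharper. Your randomized scheme asserts, without argument, that a designated request ``survives selection and repair with probability $\Omega(1/\Delta^2)$''; an $\Omega$-bound cannot deliver the specific constant $\frac{1}{2\Delta^4}$, and nothing in the sketch controls how many bad components a single heavy vertex may be the only repair option for. (Also, Lemma~\ref{lem:Probability} requires probability at least $\varepsilon$ for \emph{every} color of every list, not just the requested ones; the per-request variant you actually need is Lemma~\ref{lem:Probability2}.) The paper's proof (Theorem~\ref{thm:weaker_lists_general}) is fully deterministic and rests on the idea your sketch is missing: choose $R'$ not merely independent but pairwise at distance greater than $3$, using $\chi(G^3) \leq \Delta^3$ — this is the source of the $\Delta^3$, hence $\Delta^4$ after Observation~\ref{ob:unique}. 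Distance-$3$ separation gives Claim~\ref{obs} (each edge outside $R'$ sees at most one $R'$-vertex), which forces every bad component to have at least two terminal blocks, each isomorphic to $K_\Delta$ and each served by its own dedicated vertex of $R'$ serving no other bad component; dropping the minimum-weight server per component then costs at most half of $w(R')$. With mere independence, which is all your weighted sketch takes, a vertex outside $R'$ can lose several colors, the terminal-block structure collapses, and no analogous clean charging exists — exactly why the paper pays $\Delta^3$ rather than $\Delta$ there. In short: correct skeleton, but both quantitative cores — the discharging behind $\frac{1}{6\Delta}$ and the distance-$3$ packing behind $\frac{1}{2\Delta^4}$ — are missing.
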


Theorem \ref{thm:maxdeg3} is an improvement of our result which we presented in the conference version of this paper~\cite{ISAAC}. 
There, we proved an analogous result with a worse flexibility proportion of  $\frac{1}{2\Delta^3}$ for the unweighted case.
The new flexibility proportion of $\frac{1}{6 \Delta}$ in Theorem \ref{thm:maxdeg3} is considerably larger than the old value for all values of $\Delta \geq 3$. 
Moreover, observe that the flexibility proportion of $\frac{1}{6 \Delta}$ is best possible up to a constant factor for color lists of size $\Delta$.
Take, for example, a graph that consists of two copies of $K_\Delta$ joined by a perfect matching.
Now, consider a request that demands the same color at every vertex of one clique. 
Indeed, at most one request can be satisfied, giving an upper bound of $\frac{1}{\Delta}$ for the best possible flexibility proportion.

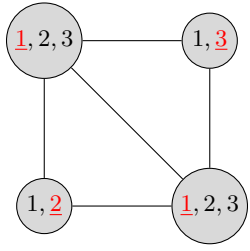
\begin{figure}
  \begin{center}
\begin{tikzpicture}
[scale=2.2,auto=left,every node/.style={circle,fill=gray!30},minimum size = 6pt,inner sep=2pt]
\node (a1) at (0,0) [draw = black] {\small{$1,\textcolor{red}{\underline{2}}$}};
\node (a2) at (1,0) [draw = black] {\small{$\textcolor{red}{\underline{1}},2,3$}};
\node (b1) at (0,1) [draw = black] {\small{$\textcolor{red}{\underline{1}},2,3$}};
\node (b2) at (1,1) [draw = black] {\small{$1,\textcolor{red}{\underline{3}}$}};

\foreach \from/\to in {a1/a2,b1/b2,a1/b1,a2/b2,a2/b1}
    \draw (\from) -- (\to);
\end{tikzpicture}
\end{center}
\caption{The graph shown here is not a clique or odd cycle and hence is list colorable with the given lists. However, not even a single vertex can be colored according to the widespread request shown by the red underlined colors.
Therefore, this example distinguishes choosability and weak flexibility.
Moreover, this example serves as an obstacle in further extensions of Theorems~\ref{thm:weaker_lists_new} and \ref{thm:weaker_lists_general}.
} 
\label{fig:weak_diamong}
\end{figure}

We first present here our simpler original proof with a worse bound which gives us the general weighted setting (Theorem~\ref{thm:weaker_lists_general}).
Then we show the improved version for the unweighted case (Theorem~\ref{thm:weaker_lists_new}).
Those results together imply Theorem~\ref{thm:maxdeg3}.
In both cases, we slightly extend Theorem \ref{thm:maxdeg3} further in the direction of Theorem~\ref{thmErdos} by allowing vertices of degree less than $\Delta(G)$ to have smaller lists.
With that being said, however, for each such vertex $v$ with $\deg(v) < \Delta(v)$, we require that $|L(v)| \geq \deg(v) + 1$. 
Hence, our sufficient condition for flexible choosability in Theorems \ref{thm:weaker_lists_general} and \ref{thm:weaker_lists_new} is likely not best possible.
On the other hand, Figure~\ref{fig:weak_diamong} provides some evidence that graphs that are not flexibly degree choosable may not be easily characterized.
In particular, Figure \ref{fig:weak_diamong} shows a diamond graph in which each vertex receives a color list of size equal to its degree along with a coloring request. 
However, in the figure, not a single coloring request can be satisfied. 
In contrast, Theorem \ref{thmErdos} shows that diamond graphs are degree choosable.

\begin{theorem}\label{thm:weaker_lists_general}
  Let $G$ be a graph of maximum degree $\Delta(G)\geq 3$, and let $\mathcal L$ consist of list assignments $L$ satisfying $|L(v)|\geq \deg(v)+1$ whenever $\deg(v)<\Delta(G)$ and $|L(v)|\geq \deg(v)$ whenever $\deg(v)=\Delta(G)$.
  If $G \not \cong K_{\Delta+1}$, then $G$ is weighted $\frac{1}{2 \Delta^4}$-flexibly $\mathcal L$-choosable.  
\end{theorem}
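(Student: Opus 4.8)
The plan is to invoke Lemma~\ref{lem:Probability}: to prove weighted $\frac{1}{2\Delta^4}$-flexibility it suffices to exhibit a \emph{single} probability distribution on $L$-colorings of $G$ for which $\Pr[\varphi(v)=c]\ge \frac{1}{2\Delta^4}$ holds for every vertex $v$ and every $c\in L(v)$. This reduces the whole statement to constructing a sufficiently ``spread out'' random $L$-coloring; in particular the distribution must assign positive probability to every pair $(v,c)$, so colorability with a prescribed color at a prescribed vertex is built into the goal rather than needing a separate argument.

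The first ingredient I would set up is a \emph{safe ordering} that guarantees the random coloring never fails. If $G$ is non-regular, choose a surplus vertex $z$ with $\deg(z)<\Delta$, so that $|L(z)|\ge \deg(z)+1$, and order $V(G)$ by decreasing $\mathrm{BFS}$-distance from $z$. Every $v\neq z$ then has at least one neighbour strictly closer to $z$, hence at most $\deg(v)-1\le |L(v)|-1$ already-colored back-neighbours when it is processed, while $z$ is colored last with $\deg(z)\le |L(z)|-1$ forbidden colors; greedy coloring with a uniformly random available color therefore always succeeds. If instead $G$ is $\Delta$-regular then, since $G\not\cong K_{\Delta+1}$ and $\Delta\ge 3$, Theorem~\ref{thmErdos} (via Corollary~\ref{corErdos}) supplies a block that is neither a clique nor an odd cycle, and I would use this block as a ``flexible base'' playing the role of the surplus root, reducing the regular case to the ordered situation above. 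So far this is essentially bookkeeping around Corollary~\ref{corErdos}, and it yields a random process that always outputs a valid $L$-coloring and in which any vertex colored before \emph{all} of its neighbours receives any fixed color $c\in L(v)$ with probability at least $\frac{1}{|L(v)|}\ge\frac1\Delta$.

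The genuinely hard step—and where the four powers of $\Delta$ are spent—is the per-pair lower bound under a single distribution. A fixed root is not enough: the last-colored vertex is highly constrained, and a specific color may be blocked at it with probability $1$, so I would also \emph{randomize the root (equivalently, the ordering)}, keeping a surplus vertex (or the flexible base) last for colorability while ensuring that every target vertex $v$ is, with probability at least $\tfrac{1}{\mathrm{poly}(\Delta)}$, colored before all of its neighbours and hence free to take any requested color. The main obstacle is controlling \emph{color availability}: an already-colored back-neighbour of $v$ can monopolize $c$, and bounding this requires propagating crude union bounds over the at most $\Delta-1$ back-neighbours of each vertex along the ordering. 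Combining the factor $\frac1\Delta$ from the free color choice with these availability bounds and the slack from the surplus/base is what degrades the constant to $\frac{1}{2\Delta^4}$; this loss is acceptable precisely because the sharper bound $\frac{1}{6\Delta}$ is recovered separately, in the unweighted regime, in Theorem~\ref{thm:weaker_lists_new}.
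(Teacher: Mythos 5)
There is a genuine gap, and it sits exactly where you locate ``the genuinely hard step.'' Your plan reduces the theorem, via Lemma~\ref{lem:Probability}, to exhibiting a single distribution on $L$-colorings with $\Pr[\varphi(v)=c]\ge \frac{1}{2\Delta^4}$ for \emph{every} pair $(v,c)$, and then proposes to obtain this by a random-root greedy coloring plus ``crude union bounds'' on color availability. But no mechanism is given, and the sketch does not survive scrutiny. First, the set of admissible roots can be empty or a single vertex: the theorem covers $\Delta$-regular graphs $G\not\cong K_{\Delta+1}$ (no surplus vertex exists) and graphs with exactly one vertex of degree below $\Delta$, so ``randomizing the root'' over surplus vertices provides essentially no randomness in the ordering. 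In a decreasing-BFS-distance order from a (near-)forced root $z$, any vertex $v$ with a neighbor strictly farther from $z$ is \emph{never} colored before all its neighbors, so your only handle on such $v$ is the availability propagation — and a union bound of the form $\Pr[c \text{ available at } v]\ge 1-\sum_u \Pr[\varphi(u)=c]$ over back-neighbors $u$ is useless without an a priori \emph{upper} bound on $\Pr[\varphi(u)=c]$, which the greedy process does not supply: adversarial lists can force a back-neighbor to monopolize $c$ with probability arbitrarily close to $1$, and these effects cascade along the order with no uniform $1/\mathrm{poly}(\Delta)$ floor. Second, in the regular case your ``flexible base'' appeal to Theorem~\ref{thmErdos} only gives \emph{existence} of an $L$-coloring of the non-clique, non-odd-cycle block; that theorem is purely existential and yields no per-pair probability bound inside the block, so the reduction ``to the ordered situation above'' is not actually available.

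The paper's proof avoids the single-distribution route entirely and is deterministic per request. It first reduces general weighted requests to \emph{uniquely} weighted ones via Observation~\ref{ob:unique} (costing a factor $\Delta$, which is where the fourth power comes from — not from availability bounds). Given a uniquely weighted request on $R$, it greedily extracts $R'\subseteq R$ of weight at least $\frac{1}{\Delta^3}w(R)$ with pairwise distance greater than three (using $\chi(G^3)\le\Delta^3$), deletes each requested color $f(r)$ from the lists of the neighbors of $r\in R'$, and applies Corollary~\ref{corErdos} to $G\setminus R'$: an $L'$-coloring of $G\setminus R'$ extends the precoloring of all of $R'$ unless some component is ``bad'' (degree-tight lists with all blocks cliques or odd cycles). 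The structural heart of the argument then shows each bad component has at least two terminal blocks, each isomorphic to $K_\Delta$ and attached to a \emph{distinct} vertex of $R'$; discarding, per bad component, the lighter attached request (at most half the weight of $R'$) destroys all bad components, yielding satisfied weight at least $\frac{1}{2\Delta^3}w(R)$, hence $\frac{1}{2\Delta^4}$ after Observation~\ref{ob:unique}. Note that — as you correctly suspect via LP duality — a distribution of the kind you seek does exist once the theorem is true, but your proposal supplies no construction of it; to repair your argument you would essentially have to prove the theorem first.
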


\begin{proof}
  In the proof, we will work with uniquely weighted requests, and we will aim to prove a flexibility proportion of $\frac{1}{2\Delta^3}$. 
Then, the result for weighted flexibility
follows from Observation~\ref{ob:unique}.

  Let $G$ be a connected graph that is not isomorphic to $K_{\Delta + 1}$. We assume that for each $v \in V(G)$, $L(v) \subseteq \mathbb{N}$. Let $R \subseteq V(G)$ be a set of vertices with coloring requests.
  As we consider only uniquely weighted requests, we can represent our coloring request with a function $f: R \rightarrow \mathbb{N}$.
Let each vertex $r \in R$ have a weight $w(r)$ that corresponds to the nonzero weight of the request at $r$.
For a subset $S \subseteq R$, let $w(S) = \sum_{r \in S} w(r)$.
As $\chi(G^3) \leq \Delta^3$, we may choose a set $R' \subseteq R$ of weight at least $\frac{1}{\Delta^3}w(R)$ with no two vertices within distance three of each other.
Note that $R'$ can be constructed greedily.
The next observation directly follows from our choice of $R'$:

\begin{claim}
Each edge of $G \setminus R'$ has at most one adjacent vertex in $R'$.
\label{obs}
\end{claim}

Now, suppose $A$ is a component of some possibly disconnected graph with a color list assignment $L'$ such that $|L'(v)| \geq \deg(v)$ for every $v \in V(A)$. We say that $A$ is a \textit{bad component} if $|L'(v)| = \deg(v)$ for every $v \in V(A)$ and every block of $A$ is either an odd cycle or a clique. If $A$ is not a \textit{bad component}, then we say that $A$ is a \textit{good component}.

We consider the graph $G \setminus R'$. We give $G \setminus R'$ a color list assignment $L'$ as follows. For a vertex $v \in V(G \setminus R')$, if $N_G(v) \cap R' = \emptyset$, then we let $L'(v) = L(v)$. If there exists a vertex $r \in N_G(v) \cap R'$, then we let $L'(v) = L(v) \setminus \{f(r)\}$. By Claim \ref{obs}, every vertex of $G \setminus R'$ has at most one neighbor in $R'$, and so $L'$ is well-defined. 
By Corollary \ref{corErdos}, if $G \setminus R'$ has no bad component, then $G$ may be $L$-colored in a way that satisfies our request at all of $R'$, in which case we satisfy a total weight of at least $\frac{1}{\Delta^3}w(R)$. Otherwise, let $A$ be a bad component of $G \setminus R'$.
We first observe that as $A$ is a bad component, for every vertex $v \in V(A)$, $|L'(v)| = \deg_{A}(v)$, and hence $\deg_G(v) = \Delta$.
By combining this fact with Claim~\ref{obs}, we obtain the following claim.

\begin{claim}\label{cl:bad}
If $A$ is a bad component, then for every vertex $v \in V(A)$, $\Delta-1\leq \deg_{A}(v)= |L'(v)| \leq \Delta$.
\end{claim}

We show that $A$ is not a single block. Indeed, suppose $A$ is a single block. 
Then, by Corollary \ref{corErdos}, $A$ must be a clique or odd cycle, and in particular, $A$ is a regular graph.
As $G$ is not isomorphic to $K_{\Delta + 1}$, $A \not \cong K_{\Delta + 1}$. 
Thus, by Claim~\ref{cl:bad}, %
$\deg_{A}(v)= \Delta - 1$ for each $v \in V(A)$, and hence $A$ must either be isomorphic to $K_{\Delta}$ or an odd cycle $C_{2k + 1}$ ($k \geq 2$) in the case that $\Delta = 3$. 
If $A \cong K_{\Delta}$, however, $A$ must have exactly one neighbor $r \in R'$ by Claim \ref{obs}, from which it follows that $\{r\} \cup A \cong K_{\Delta + 1}$, a contradiction. 
If $A \cong C_{2k + 1}$, $k \geq 2$, and $\Delta = 3$, then again by Claim \ref{obs}, $A$ must have a single neighbor $r \in R'$ which is adjacent to every vertex of $A$. This is a contradiction, as this implies that $\deg_G(r) \geq 5 > \Delta$. Therefore, $A$ has at least two blocks.

Now, we consider a \emph{terminal block} $B$ which is a leaf in the \emph{block-cut tree} of $A$ (c.f.~\cite[Chapter~3.1]{Diestel})---that is, $B$ is a block that only shares a vertex with one other block of $A$.
We claim that $B \cong K_{\Delta}$. To show this, we consider a vertex $v \in V(B)$ that is not a cut-vertex in $A$. If $\deg_A(v) = \Delta$, then $B \cong K_{\Delta + 1}$, which is a contradiction.
Hence, by Claim~\ref{cl:bad}, $\deg_A(v) = \Delta - 1$, and $B$ has a neighbor in $R'$. 
As $A$ is a bad component, Claim \ref{obs} implies that $B$ has exactly one neighbor $r \in R'$, which must be adjacent to every non cut-vertex of $B$. This implies that $|V(B)| \leq \Delta + 1$, which rules out the possibility that $B \cong C_{2k + 1}$ for some $k \geq 2$ when $\Delta = 3$. Then, as $A$ is a bad component, $B$ is $(\Delta - 1)$-regular, and it follows for all values $\Delta \geq 3$ that $B \cong K_{\Delta}$ and that $r$ is adjacent to $\Delta - 1$ vertices of $B$. We note that $r$ must then be adjacent to exactly one terminal block of a bad component, namely $B$. Furthermore, as the block-cut tree of $A$ has at least two leaves, $A$ has at least two terminal blocks $B,B'$ and hence two vertices $r, r' \in R'$ adjacent to $B, B'$ and no other terminal block of any bad component.

Now, we will construct a set $R^+ \subseteq R'$. As we construct $R^+$, we will define $R'' = R' \setminus R^+$. To construct $R^+$, for each bad component $A$ of $G \setminus R'$, we will choose a vertex $r \in R'$ of least weight adjacent to a terminal block of $A$, and we will add $r$ to $R^+$. Note that such a vertex $r$ has at least two neighbors $u,v \in V(A)$, and as $u,v$ belong to a terminal block of $A$, $uv$ must be an edge in a triangle $uvw$ of $A$ for which $w \not \sim r$. Therefore, $A \cup \{r\}$ contains an induced diamond subgraph, and by Theorem \ref{thmErdos}, $A \cup \{r\}$ is not contained in a bad component with respect to any color list assignment.

We also construct a color list assignment $L'':V(G \setminus R'') \rightarrow \mathbb{N}$ such that $G \setminus R''$ has no bad component with respect to $L''$. For a vertex $v \in V(G \setminus R'')$, if $N_G(v) \cap R'' = \emptyset$, then we let $L''(v) = L(v)$. If there exists a vertex $r \in N_G(v) \cap R''$, then we let $L''(v) = L(v) \setminus \{f(r)\}$. Again, by Claim \ref{obs}, $L''$ is well-defined. Any bad component of $G \setminus R''$ with respect to $L''$ must also be a bad component of $G \setminus R'$ with respect to $L'$, and hence by our choice of $R^+$, $G \setminus R''$ has no bad component with respect to $L''$. Therefore, by first coloring each vertex $r \in R''$ with $f(r)$ and then giving $G$ an $L''$-coloring by Theorem \ref{thmErdos}, we find an $L$-coloring on $G$ that satisfies a total request weight of at least $w(R'')$. As $w(R'') \geq \frac{1}{2}w(R') \geq \frac{1}{2 \Delta^3}w(R)$, the proof for uniquely weighted requests is complete. The general weighted flexibility statement then follows from Observation~\ref{ob:unique}.
\end{proof}

Now, we give an improved theorem for the unweighted setting.

\begin{theorem}\label{thm:weaker_lists_new}
  Let $G$ be a graph of maximum degree $\Delta(G)\geq 3$, and let $\mathcal L$ consist of list assignments $L$ satisfying $|L(v)|\geq \deg(v)+1$ whenever $\deg(v)<\Delta(G)$ and $|L(v)|\geq \deg(v)$ whenever $\deg(v)=\Delta(G)$.
  If $G \not \cong K_{\Delta+1}$, then $G$ is $\frac{1}{6 \Delta}$-flexibly $\mathcal L$-choosable.  
\end{theorem}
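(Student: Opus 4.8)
The plan is to reuse the skeleton of the proof of Theorem~\ref{thm:weaker_lists_general} but to replace its wasteful distance-three selection of the request set, which costs a factor of $\Delta^3$, with an ordinary independent-set selection, which costs only a factor of $\Delta+1$; this single change is what drops the exponent of $\Delta$ from $3$ to $1$. Since the statement is unweighted, I encode the request as a function $f\colon R\to\mathbb N$ with $f(v)\in L(v)$, and I first pass to a large independent subset $R'\subseteq R$ of $G$ with $|R'|\ge |R|/(\Delta+1)$, obtained greedily (an independent set in $G[R]$, equivalently a largest color class of a $(\Delta+1)$-coloring of $G[R]$). Because $R'$ is independent, precoloring every vertex $r\in R'$ by $f(r)$ is automatically a proper partial coloring, so the only remaining task is to extend it to an $L$-coloring of the rest of $G$.

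The extension is governed by Corollary~\ref{corErdos} exactly as before. Deleting $R'$ and setting $L'(v)=L(v)\setminus\{f(r):r\in N_G(v)\cap R'\}$ for $v\notin R'$, the crucial bookkeeping observation is that the list of $v$ shrinks by at most $|N_G(v)\cap R'|$, which is precisely the amount by which the degree of $v$ drops when $R'$ is removed; hence $|L'(v)|\ge \deg_{G\setminus R'}(v)$ holds with \emph{no} separation hypothesis at all. Thus, by Corollary~\ref{corErdos}, the only obstruction to extending the precoloring is a \emph{bad component} of $G\setminus R'$, and as in the previous proof every vertex of such a component must have $G$-degree exactly $\Delta$ and list size exactly $\Delta$, with the deleted colors being $|N_G(v)\cap R'|$ \emph{distinct} colors coming from distinct $R'$-neighbors.

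To finish, I would destroy all bad components by moving a small set $R^+\subseteq R'$ back into the uncolored part, setting $R''=R'\setminus R^+$, recomputing lists $L''$, and applying Corollary~\ref{corErdos} to the precoloring of $R''$. Since $R'$ is independent, each relocated $r\in R^+$ re-enters $G\setminus R''$ with its full list $L(r)$ of size $\ge\deg_G(r)$ and creates no new deficiency, so relocation can only eliminate bad components, never create them; the resulting coloring then satisfies at least $|R''|$ requests. The target constant comes out cleanly: if I can guarantee $|R''|\ge\frac13|R'|$, then $|R''|\ge \frac{|R|}{3(\Delta+1)}\ge\frac{|R|}{6\Delta}$ for every $\Delta\ge 3$, which is the claimed proportion. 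As before, for each bad component I would locate a \emph{terminal block} $B$ of its block-cut tree together with a vertex $r\in R'$ adjacent to $B$ whose reinsertion either forms an induced diamond or gives some vertex more colors than its degree, placing $B$ in a good component; using that a bad component has at least two terminal blocks, a bounded number of relocations per component should suffice.

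The main obstacle is exactly the step that the distance-three condition trivialized in Theorem~\ref{thm:weaker_lists_general}. Without it, the analogue of Claim~\ref{obs} fails: a vertex of $G\setminus R'$ may have several neighbors in $R'$, a bad component may be adjacent to many vertices of $R'$, and one vertex of $R'$ may touch several bad components and terminal blocks at once. I therefore expect the real work to be (i) re-establishing the terminal-block structure, namely that a terminal block of a bad component is a clique $K_m$ whose non-cut vertices carry exactly $\Delta-m+1$ requests from $R'$, and pinning down inside it a relocatable $r$ that genuinely breaks the block; and (ii) converting ``one relocation per bad component'' into an honest counting argument that retains at least a fixed positive fraction (say a third) of $R'$, despite the sharing of candidate vertices among components that the loss of separation now allows. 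The remaining pieces—the reduction to Corollary~\ref{corErdos} and the closing arithmetic—are routine.
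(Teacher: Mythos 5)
Your setup coincides with the paper's: an independent set $R'$ with $|R'|\ge|R|/(\Delta+1)$ (the paper uses Brooks' theorem to get $|R'|\ge|R|/\Delta$, but your weaker bound with a $\frac13$ retention target does give $\frac{1}{6\Delta}$), the deletion lists $L'$, the reduction to bad components via Corollary~\ref{corErdos}, and the correct observation that no separation hypothesis is needed for $|L'(v)|\ge\deg_{G\setminus R'}(v)$. But the proposal stops exactly where the theorem's real content begins, and the fallback you sketch --- ``a bounded number of relocations per component should suffice'' --- cannot close the gap as stated. Each bad component sends at least $\Delta$ edges to $R'$ while each vertex of $R'$ sends at most $\Delta$, so the number $\beta$ of bad components satisfies $\beta\le|R'|$ but can be of the \emph{same order} as $|R'|$ (picture many disjoint $K_\Delta$'s, each receiving $\Delta$ request edges, with the incidence between components and $R'$ essentially $\Delta$-regular); spending even one relocation per component can then consume nearly all of $R'$. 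The paper's proof is built precisely to beat this: a greedy phase \textbf{(P)} relocates only vertices $r$ with $b(r)\ge 2$ (net destruction of two bad components), Claim~\ref{cl:alpha} handles the case where this phase already performs $\frac16\beta$ relocations, and when \textbf{(P)} is stuck it analyzes the auxiliary bipartite graph $H$ between $R''$ and the bad components, where $\deg_H(r)\le 2$ and $\deg_H(A)\ge 2$, proves the five-case structural Claim~\ref{cl:structure}, and runs a discharging argument yielding $|R''|\ge\frac{5}{4}|D|$, so that a final round of at most $|D|$ relocations still retains $\frac15|R''|\ge\frac16|R'|$ satisfied requests. You flag this counting step yourself as ``the real work,'' but it is the main content of the theorem, not a routine conversion, and nothing in the proposal supplies it.

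Two further concrete problems. First, your assertion that relocation ``can only eliminate bad components, never create them'' is unjustified: when $r$ re-enters the uncolored part, each neighbor $v$ gains a unit of degree, but $|L''(v)|$ grows only if $f(r)\in L(v)$ and no other $R''$-neighbor of $v$ requests $f(r)$; otherwise slack at $v$ is destroyed, and a merged component could a priori turn bad. The paper never claims monotonicity; it works with the net quantity $b(r)$ and proves $b(r)\ge 1$ for every $r$ adjacent to a bad component (Claim~\ref{cl:makeGood}) --- and this claim is also exactly where the hypothesis $G\not\cong K_{\Delta+1}$ enters, via the observation that if $\deg(r)=\Delta$ and all neighbors of $r$ lie in one bad component, then $r$ has three neighbors not forming a triangle, so after reinsertion its block is neither a clique nor an odd cycle. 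Second, the induced-diamond trick you import from Theorem~\ref{thm:weaker_lists_general} leans on Claim~\ref{obs} (each component sees at most one requested vertex), which you correctly note fails here: without separation, the non-cut vertices of a terminal block can be shared among several members of $R''$, whose requests keep deleting colors, so reinserting a single $r$ need not make the component good relative to $L''$; recovering usable terminal-block structure in this regime is precisely what the case analysis of Claim~\ref{cl:structure} accomplishes, and your sketch leaves it open.
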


\begin{proof}
We define $L$, $R$, and $f$ as before.
As $\chi(G) \leq \Delta$, we may choose a set $R' \subseteq R$ of size at least $\frac{1}{\Delta}|R|$ that is an independent set.
Note that $R'$ can be constructed greedily if we aim for a slightly worse size of $\frac{1}{\Delta + 1} |R|$.
We define good components and bad components as before.
We consider the graph $G \setminus R'$. We give $G \setminus R'$ a color list assignment $L'$ as follows. For a vertex $v \in V(G \setminus R')$, we let
\[L'(v) = L(v) \setminus \{f(r): r \in N_G(v) \cap R'\}.\]
\smallskip
By Corollary \ref{corErdos}, if $G \setminus R'$ has no bad component with respect to $L'$, then $G$ may be $L$-colored in a way that satisfies our request at all of $R'$, in which case we satisfy at least $\frac{1}{\Delta}|R|$ requests. Otherwise, let $A$ be a bad component of $G \setminus R'$ with respect to $L'$.
We first observe that as $A$ is a bad component, for every vertex $v \in V(A)$, $|L'(v)| = \deg_{A}(v)$, and hence $|N_G(v) \cap R'| = \Delta - \deg_{A}(v)$. Therefore, if $A \cong K_{\Delta}$ or $A \cong K_1$, then the number of edges with an endpoint in $V(A)$ and an endpoint in $R'$ is exactly $\Delta$. Furthermore, if $A \not \cong K_{\Delta}$ and $A \not \cong K_1$, 
then $A$ is either a $K_t$ for $2 \leq t \leq \Delta - 1$, an odd cycle, or a graph with at least two terminal blocks, and in each of these cases, the number of edges with an endpoint in $V(A)$ and an endpoint in $R'$ is at least $2 \Delta- 2 > \Delta$.
Therefore, if 
$\beta$ is the number of bad components in $G\setminus R'$ with respect to $L'$, then 
$|R'| \geq  \beta.$
Furthermore, we observe that every terminal block of a bad component has at least $\Delta - 1$ incident edges with an endpoint in $R'$.

Now, we will recursively build a set $R^+$, and as we update $R^+$, we will also define $R'' = R' \setminus R^+$. We will also define a new list assignment 
\[L''(v) = L(v) \setminus \{f(r): r \in N_G(v) \cap R''\}.\]
Our aim will be to show that by adding certain vertices of $R'$ to $R^+$, we can eliminate all bad components of $G \setminus R''$ with respect to $L''$. From now on, whenever we refer to a good or bad component, we do so with respect to the list assignment $L''$.

We define some notation. Suppose we have some set $R^+ \subseteq R'$ already defined, and let $C_1$ be the number of bad components in $G \setminus R''$. For a vertex $r \in R''$, let $C_2(r)$ be the number of bad components in $G \setminus R''$ after adding $r$ to $R^+$. Then, we write $b(r) = C_1 - C_2(r)$. In other words, $b(r)$ is equal to the value by which the number of bad components in $G \setminus R''$ decreases after adding $r$ to $R^+$.
We make the following claim, which holds with respect to any set $R^+ \subseteq R'$.

\begin{claim}
\label{cl:makeGood}
For any vertex $r \in R''$ with a neighbor in a bad component of $G \setminus R''$, $b(r) \geq 1$.
\end{claim}

\noindent\textit{Proof of Claim~\ref{cl:makeGood}.}~
Suppose that $r$ has a neighbor in some bad component $A$ of $G \setminus R''$.
If $r$ has a neighbor in a good component of $G \setminus R''$, or if $\deg(r) < \Delta$, or if $r$ has a neighbor in a second bad component of $G \setminus R''$, then the claim clearly holds.
Hence, we may assume that $\deg(r) = \Delta$ and that all neighbors of $r$ belong to $A$. As $G \not \cong K_{\Delta + 1}$, either not all neighbors of $r$ belong to the same block of $A$, or all neighbors of $r$ belong to an odd cycle block of $A$. In both cases, $r$ contains three neighbors that do not form a triangle, and hence the block containing $r$ in $G \setminus (R'' \setminus \{r\})$ is neither a cycle nor a clique and hence must be a good component after $r$ is added to $R^+$. Thus, the bad component $A$ is removed from $G \setminus R''$ by adding $r$ to $R^+$, and hence $b(r) \geq 1$.
\hfill$\diamondsuit$\medskip

Now, we execute the following process \textbf{(P)} exhaustively:

\medskip
\begin{center}
\textbf{(P)} While there exists a vertex $r \in R''$ for which $b(r) \geq 2$, we add $r$ to $R^+$.
\end{center}
\medskip

\begin{claim}\label{cl:alpha}
  If we add at least $\frac{1}{6} \beta$ vertices to $R^+$ during the process {\normalfont \textbf{(P)}},
  then the theorem holds.
\end{claim}
\noindent\textit{Proof of Claim~\ref{cl:alpha}.}~
We suppose that during the process {\normalfont \textbf{(P)}}, $\alpha \beta$ vertices are added to $R^+$, where $\alpha \geq \frac{1}{6}$.
Then, after adding these vertices to $R^+$, the number of bad components in $G \setminus R''$ is at most $(1 - 2 \alpha)\beta$.
By Claim \ref{cl:makeGood}, we then may add at most $(1 - 2 \alpha)\beta$ additional vertices to $R^+$ so that $G \setminus R''$ contains only good components.
Then, we may give $G$ a list coloring in which every vertex of $R''$ is colored with its preferred color.
In this case, the proportion of vertices colored with their preferred colors may be estimated as follows, using the fact that $|R'| \geq  \beta$ for the last inequality:
\[\frac{|R''|}{|R'|} \geq \frac{|R'| - \alpha \beta - (1 - 2 \alpha ) \beta}{|R'|} = 1 - (1 - \alpha)\frac{\beta}{|R'|} \geq \alpha.\]

Therefore, at least $\alpha|R'| \geq \frac{\alpha}{\Delta}|R| \geq \frac{1}{6 \Delta} |R|$ 
vertices of $R$ are colored with their preferred colors. 
\hfill$\diamondsuit$\medskip

By Claim~\ref{cl:alpha}, we assume that at most $\frac{1}{6} \beta$ vertices are added to $R^+$ during the process \textbf{(P)}. Furthermore, after the process \textbf{(P)} is finished, there are no more vertices in $R''$ whose addition to $R^+$ reduces the number of bad components in $G \setminus R''$ by at least two.

At this point, we would like to compare the number of bad components in $G \setminus R''$ and the number of vertices in $R''$. To do this, we will consider an auxiliary bipartite graph $H$. One partite set of $H$ will be made up by the vertices of $R''$, and the other partite set $D$ of $H$ will have a vertex for each bad component of $G \setminus R''$. We often identify the vertices of $D$ with the bad components of $G \setminus R''$.

As the process \textbf{(P)} has finished, for a vertex $r \in R''$, $\deg_H(r) \leq 2$. Also note that $\deg_H(A) \geq 2$ for each $A \in D$, as every bad component other than $K_1$ and $K_{\Delta}$ must have at least $2 \Delta - 2$ incident edges with an endpoint in $R''$, and a $K_1$ or $K_{\Delta}$ bad component also must have at least two neighbors in $R''$.
If $\deg_H(r) = 2$, $r$ is not adjacent to any good component of $G \setminus R''$,  
and for each bad component $A$ to which $r$ is adjacent, $r$ belongs to a clique or odd cycle block in $A\cup\{r\}$.
Moreover, $\deg_G(r) = \Delta$, as otherwise, adding $r$ to $R^+$ would reduce the number of bad components in $G \setminus R''$ by at least two, a contradiction with the process \textbf{(P)}.
We make the following claim.
\begin{claim}\label{cl:structure}
  Let $r \in R''$, and suppose that $\deg_H(r)=2$. Let $A, A'$ be the bad components of $G \setminus R''$ to which $r$ is adjacent.
  Then either $\max\{\deg_H(A), \deg_H(A')\} \geq 3$, or $\deg_H(A) = \deg_H(A') =2$, and one of $A, A'$ has a neighbor of degree $1$ in $H$.
\end{claim}
\noindent\textit{Proof of Claim~\ref{cl:structure}.}~
We distinguish the following cases: 
\begin{enumerate}
    \item \label{caseOddCycle}
    The vertex $r$ belongs to an odd induced cycle $K$ of length at least $5$ in either $G[A \cup \{r\}]$ or $G[A' \cup \{r\}]$.
      Without loss of generality, we assume that $V(K)  \subseteq V(A) \cup \{r\}$. Since the process \textbf{(P)} is exhausted, we may assume that $K$ is a block in $G[A \cup \{r\}]$. 
      Therefore, since each vertex of $K \setminus \{r\}$ is either a cut-vertex of $A$ or has $\Delta - 2$ neighbors in $R'' \setminus \{r\}$, we may conclude by counting the edges between $A$ and $R'' \setminus \{r\}$ that $\deg_H(A) \geq 1 + \left \lceil \frac{4(\Delta - 2)}{\Delta} \right \rceil \geq 3$.
    \item \label{case:middleBlock} 
    The vertex $r$ belongs to a non-terminal block of $G[A \cup \{r\}]$. Then, $G[A \cup \{r\}]$ must have at least two terminal blocks, and therefore, the number of edges between $A$ and $R''$ must be at least $2 \Delta - 2 + |N(r) \cap A|$, and hence, $\deg_H(A) \geq 3$.
  \item\label{o:leaf}
  The vertex $r$ has a single neighbor $u$ in $A$. Let $u$ belong to a block $B$ of $A$.
  \begin{enumerate}
      \item If $\deg_A(u) = 0$, then clearly $u$ has at least $\Delta \geq 3$ neighbors in $R''$, and $\deg_H(A) \geq 3$.
      \item If $\deg_A(u) = 1$, then
        if $A$ has a terminal block $B'$ not containing $u$, $B'$ must be adjacent to at least $\Delta - 1 \geq 2$ vertices in $R'' \setminus \{r\}$. 
        Otherwise, $|V(A)| = 2$, and the neighbor $v$ of $u$ in $A$ must be adjacent to at least $\Delta - 1 \geq 2$ vertices of $R'' \setminus \{r\}$. In both cases, $\deg_H(A) \geq 3$.
      \item If $\deg_A(u) \geq 2$, then $u$ has at least two neighbors $v, w \in V(A)$. If $v$ does not belong to $B$, then $v$ and $w$ are separated by $u$ in $A$, and thus $A$ must have at least two terminal blocks, in which case $\deg_H(A) \geq 3$. On the other hand, if $v$ belongs to $B$, then $\deg_B(v) = \deg_B(w) = \deg_B(u) \leq \Delta - 1$; thus, if $\deg_H(A) = 2$, then there must exist a single additional vertex $r' \in R''$ adjacent to all vertices $x \in A$ for which $\deg_A(x) < \Delta$, and thus $G[A \cup \{r'\}]$ must have a block that is neither a clique nor an odd cycle. Therefore, as the process \textbf{(P)} is exhausted, $\deg_H(r') = 1$.     
        \end{enumerate}

  \item The vertex $r$ belongs to a terminal block $B$ in $\{r\} \cup A$, and $ | N(r) \cap V(A) | =t \geq 2$. Then, as the process \textbf{(P)} is exhausted, and assuming we are not in Case \ref{caseOddCycle}, we may assume that $B \cong K_t$. Then, $\deg_H(A) \geq \Delta - t + 1$. If $t \leq \Delta - 2$, then $\deg_H(A) \geq 3$. Otherwise, $t = \Delta - 1$, and we may apply Case (\ref{o:leaf}) to the other bad component $A'$ to which $r$ is adjacent.
\hfill$\diamondsuit$
\end{enumerate}
\medskip

We now aim to show that $|R''| \geq \frac{5}{4} |D|$.
In order to prove this, we will give each vertex in $R''$ charge $1$, and we will give each vertex in $D$ charge $- \frac{5}{4}$.
It suffices to show that $H$ has an overall nonnegative charge.
We will rearrange charges on $H$ as follows:

\begin{enumerate}
  \item\label{o:deg2n} For each vertex $A \in D$ with $\deg_H(A)= 2$ and $\{u,v\} =  N(A)$ satisfying $\deg_H(u)=\deg_H(v)=2$, we consider the following cases: 
\begin{enumerate}
  \item\label{o:deg2nc1} $N(u)=N(v)=\{A,A'\}$ for some $A' \in D$. By applying Claim \ref{cl:structure} to $u$, we know that $\deg(A')\geq 3$.
  Then, we let each of $A$ and $A'$ take a charge of $\frac{1}{2}$ from each neighbor, and then we let $A'$ give a charge of $\frac{1}{4}$ to $A$. Afterward, $A$ and $A'$ both have nonnegative charge.
  \item\label{o:deg2nc2} $N(u)\neq N(v)$. Then there are two vertices $A'\in N(u)$ and $A''\in N(v)$ such that $A,A',A''$ are all distinct.  We know by applying Claim \ref{cl:structure} to $u$ that either $\deg_H(A')\geq 3$, or $A'$ has a neighbor of degree $1$. Similarly, we know that $\deg_H(A'') \geq 3$, or $A''$ has a neighbor of degree $1$. Then, we let $A$, $A'$, and $A''$ each take a charge of $\frac{1}{2}$ from each neighbor of degree $2$ and a charge of $1$ from each neighbor of degree $1$; then we let $A$, $A'$, and $A''$ share these charges evenly.
  As the total charge taken by $A$, $A'$, and $A''$ is at least $4$, each of $A$, $A'$, and $A''$ ends with positive charge.
\end{enumerate}
  \item\label{o:deg3}
  For each remaining vertex $A \in D$ of negative charge with $\deg_H(A) \geq 3$, we let $A$ take a charge of $\frac{1}{2}$ from each of its neighbors, after which $A$ has positive charge.
  \item\label{o:deg2p}
  For each remaining vertex $A \in D$ of negative charge with $\deg_H(A) = 2$, if $A$ has a neighbor $v$ of degree $1$, then we let $A$ take a charge of $1$ from $v$ and a charge of $\frac{1}{2}$ from each of its other neighbors. Afterward, $A$ has positive charge.
\end{enumerate}

By Claim \ref{cl:structure}, this process gives all vertices $A \in D$ nonnegative charge. Furthermore, each vertex of $R''$ ends with nonnegative charge, as each degree $2$ vertex of $R''$ gives away at most a charge of $\frac{1}{2}$ to each neighbor, and each degree $1$ vertex of $R''$ gives away at most a charge of $1$ to its neighbor. Therefore, $H$ overall has nonnegative charge, and $|R''| \geq \frac{5}{4} |D|$.

By Claim~\ref{cl:makeGood}, adding at most $|D|$ additional vertices to $R^+$ makes all components of $G \setminus R''$ good, and then $G$ can be given a proper list coloring that satisfies the request of each remaining vertex of $R''$.

We estimate the total proportion of vertices in $R$ whose requests are satisfied during this process. We write $|R''|$ to denote the number of vertices in $R''$ immediately after the process \textbf{(P)} has ended. By the argument above, at least $|R''| - |D|$ vertices have their requests satisfied. As $|R''| \geq \frac{5}{4}|D|$, at least $\frac{1}{5} |R''|$ requests are satisfied. Furthermore, by Claim \ref{cl:alpha}, $|R''| \geq |R'| - \frac{1}{6} \beta$, giving us at least 
\[\frac{1}{5} \left (|R'| - \frac{1}{6} \beta \right )\]
satisfied requests. Finally, as $|R'| \geq \beta$, we must have at least 
\[\frac{1}{6} |R'| \geq \frac{1}{6 \Delta} |R|\]
satisfied requests. This completes the proof.
\end{proof}

\section{Graphs of Bounded Treewidth}\label{s:tw}
\sv{\toappendix{\section{Additions to Section~\ref{s:tw}}\label{sub:tw}}}

In this section, we consider graphs of bounded treewidth.
We characterize \textit{treewidth}\footnote{For an equivalent definition of treewidth using a tree decomposition, refer to e.g.~\cite{Bodlaender2006}.}
in terms of \textit{$k$-trees}, which is defined as follows. Given a nonegative integer $k$, a $k$-tree is a graph that may be constructed by starting with a $k$-clique and then repeatedly adding a vertex of degree $k$ whose neighbors induce a $k$-clique. The treewidth of a graph $G$ is then the smallest integer $k$ for which $G$ is a subgraph of a $k$-tree.
For technical reasons, we define a $0$-tree to be an independent set, which is an exception to our overall connectivity assumption. 

The class of connected graphs of treewidth $1$ is simply the class of trees.
The class of connected graphs of treewidth at most $2$ includes connected outer-planar graphs and series-parallel graphs, among other graphs. 
Graphs of bounded treewidth are of particular interest in the study of graph algorithms, as problems that are intractable in general are often tractable on graphs of bounded treewidth; for a survey on algorithmic aspects of treewidth, see~\cite{Bodlaender2006}. 
\lv{%

}%
As $k$-trees are $k$-degenerate, it follows that graphs of treewidth $k$ are $(k+1)$-choosable. The following result, shown implicitly in \cite{Dvorak}, shows furthermore that graphs of treewidth $1$ (i.e.\ trees) are $\frac{1}{2}$-flexibly $2$-choosable.
\begin{proposition}[\cite{Dvorak}]
\label{prop:Tree}
Let $G$ be a 1-tree with lists of size $2$.
Then there exists a set $\Phi = \{\phi_1, \phi_2\}$ of two proper colorings on $G$ such that for each vertex $v \in V(G)$, $\{\phi_1(v), \phi_2(v)\} = L(v)$. 
In particular, $G$ is weighted $\frac{1}{2}$-flexibly $2$-choosable.
\end{proposition}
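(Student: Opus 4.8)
The plan is to construct the two colorings $\phi_1,\phi_2$ simultaneously by a single top-down sweep of the tree, and then feed them into Lemma~\ref{lem:Probability}. First I would root $G$ at an arbitrary vertex $\rho$ and initialize by letting $\phi_1(\rho)$ and $\phi_2(\rho)$ be the two (distinct) colors of $L(\rho)$, in either order. I would then process the remaining vertices in any order that visits each vertex after its parent (for instance, a BFS order). The invariant I maintain is that for every already-colored vertex $v$, the pair $\{\phi_1(v),\phi_2(v)\}$ equals $L(v)$; in particular $\phi_1(v)\neq\phi_2(v)$, since $|L(v)|=2$.

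When processing a vertex $v$ with parent $p$, write $L(v)=\{a,b\}$. There are exactly two ways to distribute the colors of $L(v)$ across the two colorings so as to preserve the invariant: either $(\phi_1(v),\phi_2(v))=(a,b)$ or $(\phi_1(v),\phi_2(v))=(b,a)$. The first option is proper (for both colorings) precisely when $a\neq\phi_1(p)$ and $b\neq\phi_2(p)$, and the second precisely when $b\neq\phi_1(p)$ and $a\neq\phi_2(p)$. The key step is to verify that at least one of these two options is always available, and this is where I expect the only real content to lie. It follows from a short case check using the invariant $\phi_1(p)\neq\phi_2(p)$: if the first option is blocked because $a=\phi_1(p)$, then since $\phi_1(p)\neq\phi_2(p)$ and $a\neq b$ we get $a\neq\phi_2(p)$ and $b\neq\phi_1(p)$, so the second option is available; the symmetric cases (blocked because $b=\phi_2(p)$, etc.) are handled identically. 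Hence the sweep never gets stuck.

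Since $G$ is a tree, every edge joins some vertex to its parent, so having enforced $\phi_i(v)\neq\phi_i(p)$ across all parent--child edges for both $i\in\{1,2\}$ guarantees that $\phi_1$ and $\phi_2$ are both proper $L$-colorings; and by the maintained invariant $\{\phi_1(v),\phi_2(v)\}=L(v)$ for every $v$. This yields the desired set $\Phi=\{\phi_1,\phi_2\}$.

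For the ``in particular'' conclusion, I would place the uniform distribution on $\Phi$. For any $v$ and any $c\in L(v)$, exactly one of $\phi_1(v),\phi_2(v)$ equals $c$ (they are distinct and together exhaust the two-element list $L(v)$), so $\Pr[\varphi(v)=c]=\frac{1}{2}$. Lemma~\ref{lem:Probability} then gives weighted $\frac{1}{2}$-flexibility. The only even mildly delicate point is the feasibility check in the second paragraph, and as noted it reduces to the observation that the parent's two colors are always distinct; everything else is bookkeeping.
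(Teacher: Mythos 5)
Your proof is correct: the two-option feasibility check at each child is exactly the crux, and your case analysis (using $\phi_1(p)\neq\phi_2(p)$ and $a\neq b$) is sound, after which the uniform distribution on $\Phi$ plus Lemma~\ref{lem:Probability} gives weighted $\frac{1}{2}$-flexibility. The paper gives no proof of Proposition~\ref{prop:Tree} itself (it is cited as implicit in~\cite{Dvorak}), but your top-down sweep maintaining $\{\phi_1(v),\phi_2(v)\}=L(v)$ along parent--child edges is precisely the $k=1$ instance of the admissible-extension technique the paper uses for $2$-trees in the proof of Theorem~\ref{thm:2tree}, so your route matches the intended argument.
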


In this section, we will show that graphs of treewidth $2$ are $\frac{1}{3}$-flexibly $3$-choosable (Theorem~\ref{thm:2tree}). 
We will show furthermore that for any positive integer $k$, if a graph $G$ of treewidth $k$ has a set $\mathcal L$ of list assignments of size $k+1$ that obey certain restrictions, then $G$ is $\frac{1}{k+1}$-flexibly $\mathcal L$-choosable (Theorem~\ref{thm:lambda_ch}).
By considering a $(k+1)$-clique whose vertices all have the same color lists in which the same color is requested at every vertex, we see that a $\frac{1}{k+1}$ flexibility proportion is best possible.

When we prove a result for graphs of treewidth $k$, we will only consider $k$-trees, as a proper coloring on a graph must also give a proper coloring for every subgraph. 

\begin{theorem}
\label{thm:2tree}
Let $G$ be a 2-tree with color lists of size $3$.
Then there exists a set $\Phi = \{\phi_1, \phi_2, \phi_3, \phi_4, \phi_5, \phi_6\}$ of six colorings on $G$ such that for each vertex $v \in V(G)$,
\lv{$\{\phi_1(v), \phi_2(v), \phi_3(v), \phi_4(v), \phi_5(v), \phi_6(v)\}$}
\sv{$\{\phi_1(v)$, $\phi_2(v), \phi_3(v),$ $\phi_4(v), \phi_5(v), \phi_6(v)\}$}
is a multiset in which each color from $L(v)$ appears exactly twice.
In particular, $G$ is weighted $\frac{1}{3}$-flexibly $3$-choosable.
\end{theorem}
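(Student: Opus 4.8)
The plan is to construct the six colorings $\Phi$ directly; the ``in particular'' clause is then immediate, since assigning each $\phi_i$ probability $\frac16$ colors every vertex $v$ with every color of $L(v)$ with probability exactly $\frac{2}{6}=\frac13$, so Lemma~\ref{lem:Probability} yields weighted $\frac13$-flexibility. To build $\Phi$ I would induct on the construction order of the $2$-tree, processing vertices in the order they are added (starting from the initial edge and then repeatedly adding a degree-$2$ vertex whose two neighbors already form an edge). Throughout the induction I would maintain, besides the required multiset condition at every processed vertex, the following \emph{edge invariant}: for every edge $xy$ already present and every unordered pair of colors $\{a,b\}$, at most two of the colorings $\phi_i$ satisfy $\{\phi_i(x),\phi_i(y)\}=\{a,b\}$. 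This invariant is exactly what keeps the extension step feasible.

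For the base case I would color the initial edge $uw$: I need six proper colorings that use each color of $L(u)$ and each color of $L(w)$ exactly twice and that satisfy the edge invariant on $uw$. For arbitrary size-$3$ lists this is a small bipartite $b$-matching problem, solvable by Hall's theorem (the only forbidden pairs are those with $\phi_i(u)=\phi_i(w)$), and the pairs can be spread so that no color pair is used more than twice.

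The inductive step is an extension lemma. When a new vertex $v$ is added adjacent to an existing edge $uw$, the values $\phi_i(u),\phi_i(w)$ are already fixed, and for each slot $i$ I must pick $\phi_i(v)\in L(v)\setminus\{\phi_i(u),\phi_i(w)\}$ so that each color of $L(v)$ is used exactly twice. Viewing this as a flow from the six slots to the three colors of $L(v)$ (slot $i$ may send to $c$ iff $c$ is allowed for $i$), the standard max-flow/min-cut computation shows that a balanced assignment exists iff, for each $c\in L(v)$, at most two slots forbid both of the other colors of $L(v)$; but a slot forbids both colors of a pair $\{a,b\}$ exactly when $\{\phi_i(u),\phi_i(w)\}=\{a,b\}$, so the edge invariant on $uw$ (applied to the three $2$-subsets of $L(v)$) gives precisely this condition. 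Hence a balanced extension always exists.

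The hard part, and the technical heart of the argument, is to choose a balanced extension that \emph{also} keeps the two newly created edges $uv$ and $vw$ edge-invariant, so that the induction can continue. A new edge, say $uv$, can violate the invariant on a pair $\{a,x\}$ only if both $a,x\in L(u)$, and since exactly two slots carry each value $\phi_i(u)=a$, an overload on $\{a,x\}$ forces both slots with $\phi_i(u)=a$ to receive color $x$ together with a slot with $\phi_i(u)=x$ receiving color $a$. I would argue that such collisions can always be dodged because each value class among the slots has size only $2$ and each color of $L(v)$ needs to appear only twice, leaving enough freedom to reassign colors among interchangeable slots; concretely I would model the slot-level choice as a refined constrained assignment and use a swapping argument to repair any pair that would otherwise appear three times on $uv$ or $vw$, checking that repairing one new edge does not reintroduce a violation on the other. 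Completing this case analysis closes the induction and produces $\Phi$, which proves the theorem.
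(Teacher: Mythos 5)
Your overall skeleton is the same as the paper's: induct on the construction order of the $2$-tree, maintain an invariant on every existing edge, and prove an extension lemma when a new degree-$2$ vertex is attached; the reduction to Lemma~\ref{lem:Probability} and your Hall/flow feasibility analysis (the $|S|=2$ cuts are automatic from the exactly-twice multiset conditions, so only the pair condition matters) are both correct. However, there is a genuine gap: your edge invariant --- each \emph{unordered} color pair realized at most twice --- is strictly weaker than the paper's admissibility condition, which demands that the six \emph{ordered} pairs $(\phi_i(x),\phi_i(y))$ on each edge be pairwise distinct, and your weaker invariant does not propagate. Concretely, take $L(u)=\{1,2,4\}$, $L(w)=\{1,3,5\}$, with slot pairs $(1,3),(1,3),(2,1),(2,5),(4,1),(4,5)$ on the edge $uw$. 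This satisfies properness, both exactly-twice multiset conditions, and your invariant (only $\{1,3\}$ repeats, twice). Now attach $v$ with $L(v)=\{1,2,3\}$. Slots $1,2$ are forced to color $2$, exhausting the quota of $2$; slot $3$ is then forced to $3$; slot $5$ has allowed set $\{2,3\}$ and is forced to $3$, exhausting $3$; slots $4,6$ must take $1$. The balanced extension is therefore \emph{unique}, and on the new edge $uv$ the unordered pair $\{1,2\}$ appears three times (slots $1$, $2$, and $4$). Since the extension is unique, no swapping argument can repair it: the extension lemma your induction needs is simply false for your invariant.

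Note that the offending configuration contains the repeated ordered pair $(1,3)$, which is exactly what the paper's second admissibility condition forbids; ruling such configurations out is not a convenience but the crux. Correspondingly, the technical heart of the paper's proof is showing that the distinct-ordered-pairs invariant \emph{can} always be maintained at the two new edges, which it does by a minimal-counterexample argument on $|L(u)\cup L(v)\cup L(w)|$: colors unique to one list are temporarily substituted away to reduce the overlap structure, leaving only the fully shared case (where the extension is unique and checkable) and one explicit configuration with $|L(u)\cap L(v)|=2$ handled by a table. Your proposal instead defers its hardest step to an unproved ``swapping argument,'' and with the invariant you chose, that step cannot be carried out; strengthening the invariant to distinct ordered pairs and then proving the extension lemma is where essentially all the work of the theorem lies.
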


It is interesting to note that the result of Theorem~\ref{thm:2tree} (as well as Proposition~\ref{prop:Tree}) can be easily turned into linear time algorithm that provides a $1/3$-satisfiable (resp.\ $1/2$-satisfiable) coloring, as the construction of the set $\Phi$ is algorithmic. 

\begin{proof}[Proof of Theorem~\ref{thm:2tree}]
We will construct a set of six $L$-colorings $\Phi: = \{\phi_1, \dots, \phi_6\}$ on $G$. Given an edge $uv \in E(G)$, we say that $\Phi$ is \textit{admissible} at $uv$ if the following conditions are satisfied:
\begin{itemize}
    \item For each $i = 1, \dots, 6$, $\phi_i(u) \neq \phi_i(v)$.
    \item If $\phi_i(u) = \phi_j(u)$ and $\phi_i(v) = \phi_j(v)$, then $i = j$.
    \item For each color $c \in L(u)$, $c$ appears exactly twice in the multiset $\{\phi_1(u), \dots, \phi_6(u)\}$.
    \item For each color $c' \in L(v)$, $c'$ appears exactly twice in the multiset $\{\phi_1(v), \dots, \phi_6(v)\}$.
\end{itemize}

We establish the following claim, which will be the main tool of our proof.
\sv{The proof of the claim is in the full version of the paper.}
\begin{claim}\label{cl:main}
Let $G$ be a $2$-tree, and let $uv \in E(G)$. Let $\Phi$ be a set of six $L$-colorings on $G$ that is admissible at every edge of $G$. Suppose a vertex $w$ is added to $G$ with neighbors $u,v$. Then $\Phi$ may be extended to $G+w$ so that $\Phi$ is also admissible at $uw$ and $vw$.
\label{claim:extend}
\end{claim}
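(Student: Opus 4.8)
The plan is to extend the six colorings $\Phi$ to the newly added vertex $w$ by choosing values $\phi_1(w), \dots, \phi_6(w)$ from $L(w)$ (with each color of $L(w)$ used exactly twice), subject to the two local constraints that $w$ must differ from $u$ in each coloring and differ from $v$ in each coloring, and that no pair $(\phi_i(u),\phi_i(v),\phi_i(w))$ repeats in the relevant sense. The key insight is that all the admissibility conditions at the old edges $uw$ and $vw$ reduce to conditions involving only the three vertices $u,v,w$, so I would fix attention on the $6 \times 2$ array of values $\bigl(\phi_i(u), \phi_i(v)\bigr)_{i=1}^{6}$ inherited from $\Phi$ and ask how to fill in a third column from $L(w)$.

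First I would set up the combinatorial bookkeeping. Since $\Phi$ is already admissible at the edge $uv$, the six pairs $(\phi_i(u),\phi_i(v))$ satisfy: each of $u$'s three colors appears exactly twice down the $u$-column, each of $v$'s three colors appears exactly twice down the $v$-column, $\phi_i(u)\neq\phi_i(v)$ always, and no ordered pair $(\phi_i(u),\phi_i(v))$ is repeated. I would then think of $w$'s task as assigning to each row $i$ a color $\phi_i(w)\in L(w)$ with $\phi_i(w)\neq\phi_i(u)$ and $\phi_i(w)\neq\phi_i(v)$, while using each color of $L(w)$ exactly twice overall. This last requirement (each color twice) is what forces the count to balance: six rows, three colors, two each. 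I would organize the argument around a small case analysis driven by how $L(w)$ overlaps with the forbidden colors in each row; since each row forbids at most two colors and $|L(w)|=3$, each row always has at least one admissible choice, and the real work is making a \emph{global} selection that respects the ``exactly twice'' budget for every color of $L(w)$.

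The cleanest route is to phrase this as a combinatorial feasibility problem and solve it by an exchange or matching argument. I would consider, for each color $c\in L(w)$, the set of rows in which $c$ is admissible (i.e.\ $c\neq\phi_i(u)$ and $c\neq\phi_i(v)$), and then seek an assignment in which each color captures exactly two of its admissible rows and every row is captured exactly once; this is precisely a perfect matching / exact-cover condition on a bipartite-type structure with capacities. Because each row forbids at most two distinct colors among the three available, and the inherited column-balance constraints limit how lopsided the forbidden patterns can be, I expect to verify feasibility either via Hall's condition with capacities or, more concretely, by a direct case split on the size of $L(w)\cap\{\phi_i(u),\phi_i(v)\}$ across the rows. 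In the easiest case $L(w)$ is disjoint from some of the color classes and the assignment is nearly free; the constrained cases are where $L(w)$ shares colors with both $L(u)$ and $L(v)$.

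\textbf{The main obstacle} I anticipate is exactly this global twice-each accounting in the worst case where $L(w)$, $L(u)$, and $L(v)$ overlap heavily, so that in many rows two of $w$'s three colors are forbidden and only one admissible choice survives. If too many rows are forced onto the same color, the ``exactly twice'' budget could be violated, so the crux is to show that the inherited admissibility of $\Phi$ at $uv$ (particularly the no-repeated-pair and the twice-each-on-each-column conditions) prevents such an overload. I would handle this by counting, for each color $c\in L(w)$, how many rows \emph{force} $c$ (both other colors forbidden) and showing this count never exceeds $2$, then invoking a defect-version of Hall's theorem or a direct greedy-plus-exchange argument to complete the remaining free choices into a valid twice-each assignment. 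Once $\phi_i(w)$ is chosen for all $i$, admissibility at $uw$ and $vw$ follows immediately from the construction, completing the extension.
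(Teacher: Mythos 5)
Your reduction to a three-vertex problem is sound, and your counting observations are correct (each row forbids at most two colors of $L(w)$, so every row has an admissible choice; and a color of $L(w)$ can be \emph{forced} in at most two rows, since the ordered pairs $(\phi_i(u),\phi_i(v))$ are pairwise distinct). But there is a genuine gap: your feasibility formulation drops the second admissibility condition at the new edges. Admissibility at $uw$ requires that the map $i \mapsto (\phi_i(u),\phi_i(w))$ be injective; since each color of $L(u)$ occupies exactly two rows of the $u$-column, this means that for each of the three row-pairs sharing a $u$-color, the two chosen $w$-colors must differ, and likewise for the three row-pairs sharing a $v$-color. These six disequality constraints are \emph{not} implied by row-avoidance plus the twice-each budget, so your closing sentence (``admissibility at $uw$ and $vw$ follows immediately from the construction'') fails. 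Concretely, take $L(u)=\{1,2,3\}$, $L(v)=\{1,2,4\}$, $L(w)=\{1,3,4\}$, with rows $(\phi_i(u),\phi_i(v))$ equal to $(1,2),(1,4),(2,1),(2,4),(3,1),(3,2)$; this is admissible at $uv$. Rows $2$ and $5$ force $w$-colors $3$ and $4$ respectively; the budget then forces rows $4$ and $6$ to take color $1$ (rows $1$ and $3$ cannot take $1$), leaving rows $1$ and $3$ to split $\{3,4\}$. Both splits satisfy all of your constraints, but the split with $\phi_1(w)=3$ violates injectivity at $uw$, since rows $1$ and $2$ share $u$-color $1$ and would share $w$-color $3$; only $\phi_1(w)=4$, $\phi_3(w)=3$ yields an admissible extension. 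So a Hall-with-capacities argument for the budget alone can legitimately output an inadmissible assignment, and the ``crux'' you identified (bounding forced rows by $2$) does not address the actual crux.

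To repair your approach you would have to prove feasibility of the richer system --- per-row lists, the exact-twice budget, \emph{and} the six disequalities tying together row-pairs --- which is a list-coloring-type problem on the six rows rather than a plain capacitated matching, and it is not clear a clean Hall-type certificate exists for it. The paper avoids this entirely: it takes a counterexample minimizing $|L(u)\cup L(v)\cup L(w)|$, uses a temporary color-substitution trick to show that in a minimal counterexample every color appears on at least two of the three lists, and then a short case split on $|L(u)\cap L(v)|$ reduces everything to the fully-shared base case $L(u)=L(v)=L(w)$ (where the extension is unique and admissible) and the single configuration exhibited above (settled by the explicit assignment). If you wish to keep your matching framing, you would at minimum need to incorporate the disequalities and show the forced choices always propagate consistently --- which in effect re-derives the paper's case analysis.
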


\toappendix{
\noindent\textit{Proof of Claim~\ref{cl:main}.}~
Suppose that the claim does not hold. Let $L(u), L(v), L(w)$ give a counterexample where $|L(u) \cup L(v) \cup L(w)|$ is minimum. For our base case, if $L(u) = L(v) = L(w)$, then $\Phi = \{\phi_1, \dots, \phi_6\}$ on $G$ is uniquely defined at $uv$ due to the first two conditions of admissibility. Moreover, there is only one way to extend $\phi_1, \dots, \phi_6$ as proper colorings to $w$, hence there is only one way of extending $\Phi$ on $G+w$, and the extended $\Phi$ will satisfy all four conditions of admissibility at $uw$ and $vw$.

Next, we show that in a minimum counterexample, none of $L(u), L(v), L(w)$ contains a color that only appears on one of the lists. Suppose first that some color $c^* \in L(u)$ does not appear at $L(v) \cup L(w)$. As $\Phi$ is admissible at $uv$, there exists a unique element $x \in L(v)$ such that $(c^*, x) \neq (\phi_i(u), \phi_i(v))$ for each $i = 1, \dots, 6$. We temporarily replace $c^*$ with $x$ in $L(u)$ and in $\Phi$. Then $\Phi$ is still admissible at $uv$. (This substitution may cause $\Phi$ to be inadmissible at other edges in $G$ containing $u$, but this substitution is only temporary.) Furthermore, by the minimality of our counterexample, after this substitution, $\Phi$ may be extended to $w$ in such a way that $\Phi$ is admissible at $uw$ and $vw$. Now, we replace $x$ back by $c^*$ in $L(u)$ and in $\Phi$, and in this way we have extended $\Phi$ to $G+w$. By the choice of $c^*$ and $x$, it follows  that $\Phi$ is admissible at each edge of $G+w$, including $uw$ and $vw$. This shows that if $L(u)$ contains a unique color that does not appear in $L(v) \cup L(w)$, $G$ cannot be a minimum counterexample. By a similar argument, we may assume that each color of $L(v)$ appears on $L(u)\cup L(w)$.

To complete the argument, we next suppose that there exists a color $c^* \in L(w)$ that does not appear at $L(u), L(v)$. As in the previous case, we easily extend $\Phi$ by temporarily switching $c^*$ in $L(w)$. By the minimality of our counterexample, if we replace $c^*$ with any color $c \in L(u)$ that does not already appear at $L(w)$ (such a color must exist), we may extend $\Phi$ to $w$ in such a way that $\Phi$ is admissible at each edge of $G+w$. Then, after replacing $c$ with $c^*$ at $w$, if follows from the choice of $c^*$ and $c$ that $\Phi$ remains admissible at each edge of $G+w$. 

Therefore, each color of $L(u), L(v), L(w)$ must appear on at least two lists, and we are ready to show that no counterexample to the claim that is minimum with respect to $|L(u) \cup L(v) \cup L(w)|$ exists. 

Suppose that $|L(u) \cap L(v)| = 1$. Then $L(u) \cup L(v)$ contains four colors that only appear on one of  $L(u)$ or $L(v)$. Since $L(w)$ contains three colors,  some color $c^* \in L(u) \cup L(v)$ must only appear on one list, which leads to the case we already considered above. 

Suppose that $|L(u) \cap L(v)| = 2$. Then there exist two colors $c \in L(u)$ and $c' \in L(v)$ that each only appear once in $L(u) \cup L(v)$. If no uniquely appearing color exists in $L(u) \cup L(v) \cup L(w)$, then $c,c' \in L(w)$, and the third color of $L(w)$ must then appear in $L(u) \cap L(v)$. In this case, we may assume without loss of generality that $L(u) = \{1,2,3\}, L(v) = \{1,2,4\}, L(w) = \{1,3,4\}$, and also, as $\Phi$ is admissible at $uv$, that
\begin{eqnarray}
   \phi_1(u) = 1; & \phi_1(v) = 2 ;\nonumber \\
   \phi_2(u) = 1; & \phi_2(v) = 4 ;\nonumber \\
   \phi_3(u) = 2; & \phi_3(v) = 1 ;\nonumber \\
   \phi_4(u) = 2; & \phi_4(v) = 4 ;\nonumber \\
   \phi_5(u) = 3; & \phi_5(v) = 1 ;\nonumber \\
   \phi_6(u) = 3; & \phi_6(v) = 2. \nonumber 
\end{eqnarray}
Then, letting $\phi_1(w) = 4$, $\phi_2(w) = 3$, $\phi_3(w) = 3$, $\phi_4(w) = 1$, $\phi_5(w) = 4$, $\phi_6(w) = 1$, we extend $\Phi$ on $G+w$, and moreover, $\Phi$ is admissible at $uw$ and $vw$. 

Finally, suppose that $|L(u) \cap L(v)| = 3$. Then, if we assume that $L(w)$ does not contain any uniquely appearing color, we must have that $L(u) = L(v) = L(w)$, which is the base case which we have already considered. Thus, the claim holds.
\hfill$\diamondsuit$\medskip
}

Now, as $G$ is a $2$-tree, $G$ may be constructed by starting with a single edge and repeatedly adding a vertex of degree two whose two neighbors induce an edge. Suppose we start with an edge $e$. It is straightforward to construct an admissible set $\Phi$ of six colorings of $e$. Then, suppose that we have a partially constructed $2$-tree $G'$ and that $\Phi$ is an admissible coloring set at each edge of $G'$. We may add a vertex $w$ to $G'$ with adjacent neighbors $u,v$, and by Claim \ref{claim:extend}, we may extend $\Phi$ to $w$ while still letting $\Phi$ be admissible at every edge of the new graph. By this process, we may construct a set $\Phi$ of six colorings that is admissible at every edge of $G$.
\lv{%

}%
We conclude the proof by a simple application of Lemma~\ref{lem:Probability} on $\Phi$ with $\epsilon=1/3$.
\end{proof}

For $k \geq 3$, the question of whether or not $k$-trees are flexible with lists of size $k+1$ is still open.
However, after adding some restrictions to our color lists, we can guarantee the existence of a flexible list coloring of any $k$-tree with lists of size $k+1$. Our method will be an application of the algorithm of Theorem \ref{thm:2tree}.
In order to state our result precisely, we will need a definition. 

Given a partition $\lambda = \{\lambda_1, \dots, \lambda_t\}$ of $(k+1)$---that is, an integer multiset for which $\lambda_1 + \dots + \lambda_t = k+1$---a $\lambda$-assignment $L$ on a graph $G$ is a list assignment for which $\bigcup_{v \in V(G)} L(v)$ may be partitioned into parts $C_1, \dots, C_t$ such that for each $v \in V(G)$ and each value $1 \leq i \leq t$, $|L(v) \cap C_i| = \lambda_i$. Then, a graph $G$ is \emph{$\lambda$-choosable} if there exists a list coloring on $G$ for any $\lambda$-assignment $L$. Zhu introduces $\lambda$-assignments in \cite{Zhu} and notes 
that for any integer $k$, $\{k\}$-choosability is equivalent to $k$-choosability, and $\underbrace{\{1,1, \dots, 1\}}_\text{$k$ times}$-choosability is equivalent to $k$-colorability. With this definition, $\lambda$-choosability gives a notion of colorings that lie between traditional colorings and traditional list colorings. Zhu shows, for example, that while tripartite planar graphs are not $4$-choosable in general, these graphs are always $\{1,3\}$-choosable. Choi and Kwon remark furthermore that while general planar graphs are not $\{1,3\}$-choosable, the question of whether all planar graphs are $\{1,1,2\}$-choosable is still open \cite{Choi}. We may extend the concept of $\lambda$-choosability to flexible list colorings by saying that a graph $G$ is $\epsilon$-flexibly $\lambda$-choosable if, given any $\lambda$-assignment $L$ and request $r$ on $G$, $r$ is $\epsilon$-satisfiable with respect to $L$. Then we have the following theorem.

\begin{theorem}\label{thm:lambda_ch}
Let $G$ be a $k$-tree, and let $\lambda = \{\lambda_1, \dots, \lambda_t\}$ be a partition of $k+1$ with parts of size at most $3$. Then $G$ is $\frac{1}{k+1}$-flexibly $\lambda$-choosable.
\end{theorem}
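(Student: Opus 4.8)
The plan is to invoke Lemma~\ref{lem:Probability}: it suffices to construct a probability distribution on $L$-colorings of $G$ under which every vertex $v$ receives each color of $L(v)$ with probability at least $\frac{1}{k+1}$; this in fact yields the stronger \emph{weighted} flexibility, and the unweighted statement follows. I would realize such a distribution by a \emph{uniform coloring set}: a finite multiset $\Phi = \{\phi_1, \dots, \phi_M\}$ of proper $L$-colorings such that for each $v \in V(G)$, every color of $L(v)$ occurs in exactly $\frac{M}{k+1}$ of the $\phi_j$. Sampling $\phi_j$ uniformly then gives each color probability exactly $\frac{1}{k+1}$, which is optimal by the $(k+1)$-clique example noted before the theorem. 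This directly generalizes Proposition~\ref{prop:Tree} (where $M=2$) and Theorem~\ref{thm:2tree} (where $M=6$), and $M$ would be chosen as a common multiple of these per-part sizes so that $\frac{M}{k+1}$ is an integer count at every vertex.

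I would build $\Phi$ by induction along the construction order of the $k$-tree, extending one vertex at a time exactly in the spirit of Claim~\ref{cl:main}. Starting from the initial clique, I maintain a uniform, suitably ``spread out'' coloring set on every maximal clique, so that whenever a new vertex $w$ is attached to a $k$-clique $Q = \{u_1,\dots,u_k\}$, the colors that $Q$ forces upon $w$ across the $M$ colorings leave room to color $w$ uniformly. For each $\phi_j$ the clique $Q$ uses $k$ distinct colors and so blocks at most $k$ of the $k+1$ colors of $L(w)$; assigning $w$ a color in each $\phi_j$ so that each of its $k+1$ colors is used exactly $\frac{M}{k+1}$ times is a transportation problem, feasible precisely when for every nonempty $B \subseteq L(w)$ the number of colorings in which $Q$ uses all of $B$ is at most $(k+1-|B|)\frac{M}{k+1}$ (Hall/max-flow). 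The substance of the argument is to show that the maintained invariant both guarantees this condition and is re-established at the new cliques $\{u_i,w\}$.

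This is exactly where the hypothesis that every part $\lambda_i$ has size at most $3$ is used, and where I expect the main difficulty to lie. Since the color classes $C_1,\dots,C_t$ are disjoint, two adjacent vertices conflict only when they draw from the same part; hence, fixing for each coloring which part each vertex uses, the local coloring problem on $\{w\}\cup Q$ decouples part by part into list problems with lists of size $\lambda_i \le 3$. For a part of size $2$ the local obstruction is that of Proposition~\ref{prop:Tree}, and for a part of size $3$ it is exactly that of Theorem~\ref{thm:2tree}; the admissible local coloring sets produced there are the objects I would glue across parts to certify the transportation condition and to restore the invariant. The restriction to parts of size at most $3$ is genuinely necessary: a single part of size $k+1$ recovers ordinary $(k+1)$-choosability, whose flexibility is open for $k \ge 3$, and the naive alternative of coloring each part on its own induced subgraph fails outright, since restricting to one part's colors can be infeasible (a $K_4$ with identical size-$3$ sublists is not even properly colorable).

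With the uniform set $\Phi$ constructed on all of $G$, the proof concludes by Lemma~\ref{lem:Probability} with $\varepsilon = \frac{1}{k+1}$. The delicate part of the write-up will be the extension step: verifying that the per-part gluing simultaneously meets the exact uniform count $\frac{M}{k+1}$ at all $k$ cliques $\{u_i,w\}$ while keeping the joint distribution on $Q$ spread out enough for the next extension. I would handle this, as in Claim~\ref{cl:main}, by a minimal-counterexample analysis on $|L(u)\cup L(v)\cup L(w)|$-type data within each part, reducing repeatedly to the base configurations already resolved by Proposition~\ref{prop:Tree} and Theorem~\ref{thm:2tree}.
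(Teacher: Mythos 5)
You correctly identify the target — a finite multiset $\Phi$ of $L$-colorings in which every color of every list appears at its vertex exactly $\frac{M}{k+1}$ times, fed into Lemma~\ref{lem:Probability} — and you correctly see that the disjointness of the parts $C_1,\dots,C_t$ is what lets parts of size at most $3$ lean on Proposition~\ref{prop:Tree} and Theorem~\ref{thm:2tree}; your remarks on the necessity of the size-$3$ restriction are also sound. But there is a genuine gap at exactly the point you flag and then defer: the vertex-by-vertex extension. Your invariant (``suitably spread out'') is never defined, and the Hall/transportation condition you state is a constraint on the \emph{joint} distribution of the $M$ colorings restricted to the attachment clique $Q$, which per-vertex uniform counts do not control. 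The per-part decoupling does not rescue this: in your scheme, \emph{which} vertices of $Q$ draw from a given part $C_i$ varies from coloring to coloring, so there is no fixed subgraph on which the admissibility machinery of Theorem~\ref{thm:2tree} can be run or its invariant re-established at the new cliques $\{u_i, w\}$. Maintaining an admissible-type joint invariant on $k$-cliques is precisely the difficulty that the paper notes is open already for $k$-trees with plain lists of size $k+1$ when $k \ge 3$ (a single part of size $4$), and nothing in your sketch explains how the part structure eliminates, rather than merely reorganizes, that joint bookkeeping.

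The paper's proof avoids sequential extension entirely and instead inducts on the number of parts $t$, fixing the ``which vertices use which part'' pattern \emph{globally} rather than negotiating it coloring by coloring. For each set $A$ in the family $\mathcal A$ of all $(k-\lambda_t)$- and $(k-\lambda_t+1)$-subsets of the root clique, it grows a vertex set $S_A$ by the rule that $v_i$ joins $S_A$ exactly when it has precisely $k-\lambda_t$ back-neighbors already in $S_A$. Then $G[S_A]$ is a $(k-\lambda_t)$-tree, colored inductively using only $C_1 \cup \dots \cup C_{t-1}$, while the complement is a $(\lambda_t-1)$-tree colored using only $C_t$ via Proposition~\ref{prop:Tree} or Theorem~\ref{thm:2tree}; disjointness of the parts makes the combined coloring proper with no interaction across the cut, so no transportation condition ever arises. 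Exact uniformity of the resulting distribution then follows from the purely combinatorial Claim~\ref{claim:cliques}: as $A$ ranges over $\mathcal A$, the traces $V(K) \cap S_A$ enumerate exactly once every $(k-\lambda_t)$- and $(k-\lambda_t+1)$-subset of each $k$-clique $K$, and a direct count gives each color probability exactly $\frac{1}{k+1}$. This global splitting device is the missing idea in your plan; to repair your proof you would need to supply some mechanism of comparable strength, since the Hall-type feasibility condition alone is not self-sustaining under extension.
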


\toappendix{
  \sv{\begin{proof}[Proof of Theorem~\ref{thm:lambda_ch}]}
\lv{\begin{proof}}
We will induct on $t$, the number of parts in $\lambda$. We will show that there exists a set $D$, $|D| \in (k+1) \mathbb{Z}$, of $L$-colorings on $G$ such that for any given vertex $v \in V(G)$ and any given color $c \in L(v)$, $c$ appears at $v$ in exactly $\frac{1}{k+1}|D|$ colorings in $D$. 

When $t = 1$, $G$ is a $k$-tree, where $k = \lambda_1 - 1$, and $\lambda_1 \leq 3$. Our $\lambda$-assignment $L$ is simply an assignment of color lists of size $\lambda_1  =k+1$. Therefore, our task is to give $G$ a $\frac{1}{\lambda_1}$-flexible $L$-coloring. If $\lambda_1 = 1$, then $k = 0$ and $G$ is a single vertex, in which case the statement is trivial. Otherwise, if $\lambda_1 = 2$ or $\lambda_1 = 3$, then the statement either follows from Proposition \ref{prop:Tree} or Theorem \ref{thm:2tree}.

Next, suppose $t \geq 2$. 
As $G$ is a $k$-tree, we may order the vertices of $V(G)$ as $v_1, \dots, v_n$ so that the following properties are satisfied:
\begin{itemize}
\item $v_1, \dots, v_k$ form a clique.
\item For any $i \geq k+1$, $v_i$ has exactly $k$ neighbors $v_j$, $j < i$, and these neighbors form a $k$-clique.
\end{itemize}
Given a vertex $v_i \in V(G)$, we say that a vertex $v_j \in N(v_i)$, $j < i$ is a \textit{back-neighbor} of $v_i$.

We will construct a set of $L$-colorings of $G$ as follows. We define the family of vertex subsets 
\[ \mathcal A: = {{\{v_1, \dots, v_{k}\}} \choose {k - \lambda_t + 1}} \cup {{\{v_1, \dots, v_{k}\}} \choose {k - \lambda_t}}.\]

For each vertex subset $A \in \mathcal A$, we will construct a set of $L$-colorings of $G$. We will then obtain $D$ by taking the union of all of these colorings.

For each vertex subset $A \in \mathcal A$, we will recursively construct a set $S_A \subseteq V(G)$. We start by letting $S_A = A$. Next, we consider the vertices $v_{k+1}, \dots, v_n$ in order. For each vertex $v_i$, $i \geq k + 1$, if $v_i$ has exactly $k-\lambda_t$ back-neighbors in $S_A$, then we add $v_i$ to $S_A$; otherwise, we exclude $v_i$ from $S_A$. After we have considered all vertices $v_i \in V(G)$ in this way, our set $S_A \subseteq V(G)$ is successfully constructed. Figure \ref{fig:3tree} shows an example construction of a set $S_A$ when $k = 3$ and $\lambda_t = 1$.

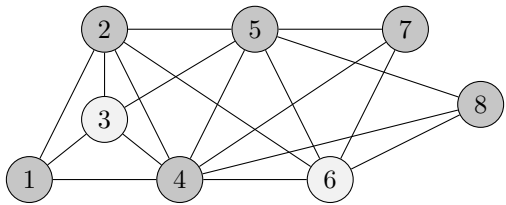
\begin{figure}
  \begin{center}
\begin{tikzpicture}
[scale=2,auto=left,every node/.style={circle,fill=gray!15}]
\node (v1) at (0,0) [draw = black, fill = gray!45] {$1$};
\node (v2) at (0.5,1) [draw = black, fill = gray!45] {$2$};
\node (v3) at (0.5,0.4) [draw = black, fill = gray!10] {$3$};
\node (v4) at (1,0) [draw = black, fill = gray!45] {$4$};
\node (v5) at (1.5,1) [draw = black, fill = gray!45] {$5$};
\node (v6) at (2,0) [draw = black, fill = gray!10] {$6$};
\node (v7) at (3,0.5) [draw = black, fill = gray!45] {$8$};
\node (v8) at (2.5,1) [draw = black, fill = gray!45] {$7$};
    %b1/b3,b1/b4,b3/b4,b3/b2,b4/b2
    %,d2/b1,b2/f2
\foreach \from/\to in {v1/v2,v1/v3,v1/v4,v2/v3,v2/v4,v3/v4,v5/v4,v5/v3,v5/v2,v6/v5,v6/v4,v6/v2,v7/v6,v7/v5,v7/v4,v8/v5,v8/v6,v8/v4}
    \draw (\from) -- (\to);

\end{tikzpicture}
\end{center}
\caption{The figure shows a $3$-tree $G$ with its vertices numbered according to their position in the $3$-tree ordering of $V(G)$. The dark vertices show the set $S_A$ that is obtained from our construction when $r = 1$ and $A = \{1,2\}$. $S_A$ is constructed as follows. As $4$ has exactly two back-neighbors in $S_A$, $4$ is added to $S_A$, and $5$ is added similarly. However, as $6$ has three back-neighbors in $S_A$, $6$ is not added to $S_A$. Vertices $7$ and $8$ are both added to $S_A$, as they both have exactly two back-neighbors in $S_A$. The resulting vertices of $S_A$ form a $2$-tree.
}
\label{fig:3tree}
\end{figure}

\begin{claim}
\label{claim:cliques}
For any $k$-clique $K \subseteq V(G)$, the family of vertex subsets $\{V(K) \cap S_A: A \in \mathcal A\}$ exactly contains every $(k-\lambda_t+1)$-subset and every $(k-\lambda_t)$-subset of $V(K)$. That is, 
\[\{V(K) \cap S_A: A \in \mathcal A\} = {V(K) \choose {k - \lambda_t + 1}} \cup {V(K) \choose {k - \lambda_t}}.\]
\end{claim}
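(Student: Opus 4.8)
The plan is to prove Claim~\ref{claim:cliques} by induction on the position of the $k$-clique $K$ in the $k$-tree ordering, tracking how the family of traces $\{V(K) \cap S_A : A \in \mathcal A\}$ evolves as we move from one $k$-clique to an adjacent one. First I would handle the base case: the initial clique $K_0 = \{v_1, \dots, v_k\}$. By the very definition of $\mathcal A$, the sets $A$ range over all $(k-\lambda_t+1)$-subsets and all $(k-\lambda_t)$-subsets of $V(K_0)$, and since $S_A$ is initialized to $A$ and no vertex of $K_0$ is added later (they are already present), the trace $V(K_0) \cap S_A$ is exactly $A$. Hence the claim holds for $K_0$ by construction.

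For the inductive step, the key structural observation is how $k$-cliques relate in a $k$-tree. When a vertex $v_i$ ($i \geq k+1$) is added, its $k$ back-neighbors form a $k$-clique $K$, and together with $v_i$ they create new $k$-cliques of the form $(K \setminus \{x\}) \cup \{v_i\}$ for each $x \in V(K)$. Every $k$-clique of $G$ is either $K_0$ or arises this way from some earlier clique, so it suffices to show that if the claim holds for $K$, it transfers to each new clique $K' = (K \setminus \{x\}) \cup \{v_i\}$. The central computation is to determine the trace $V(K') \cap S_A$ in terms of $V(K) \cap S_A$: membership of $v_i$ in $S_A$ is decided exactly by whether $v_i$ has precisely $k - \lambda_t$ back-neighbors in $S_A$, i.e.\ whether $|V(K) \cap S_A| = k - \lambda_t$, while the other $k-1$ vertices of $K'$ are shared with $K$ and their membership is inherited.

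The heart of the argument is therefore a bijective/counting lemma: assuming $\{V(K) \cap S_A : A \in \mathcal A\}$ realizes (with the correct multiplicities, each exactly once) all $(k-\lambda_t+1)$- and $(k-\lambda_t)$-subsets of $V(K)$, I would verify that the induced family $\{V(K') \cap S_A : A \in \mathcal A\}$ realizes all $(k-\lambda_t+1)$- and $(k-\lambda_t)$-subsets of $V(K')$. The case split runs on the size $s = |V(K) \cap S_A|$: if $s = k-\lambda_t$, then $v_i \in S_A$, so the new trace gains $v_i$ but may lose $x$; if $s = k - \lambda_t + 1$, then $v_i \notin S_A$, so $v_i$ is absent from the new trace. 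Careful bookkeeping of whether $x \in S_A$ in each case should show that subsets of $V(K')$ of the two target sizes are each hit exactly once. I expect this bookkeeping — confirming that the map on traces is a bijection onto the prescribed family, with no subset of $V(K')$ produced twice and none omitted — to be the main obstacle, since it requires matching up $(k-\lambda_t+1)$- and $(k-\lambda_t)$-subsets across the swap of $x$ for $v_i$ and depends delicately on the membership of both $x$ and $v_i$ in $S_A$.

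A clean way to organize the verification is to partition $\mathcal A$ (or rather the induced traces on $K$) according to $s$ and to $x \in S_A$ or $x \notin S_A$, and to check that each of the four resulting classes maps to exactly the right collection of subsets of $V(K')$. Summing the counts should reproduce $\binom{k}{k-\lambda_t+1} + \binom{k}{k-\lambda_t}$ many subsets of $V(K')$, matching $\binom{k}{k-\lambda_t+1} + \binom{k}{k-\lambda_t}$, which is exactly the total number of $(k-\lambda_t+1)$- and $(k-\lambda_t)$-subsets of a $k$-set, confirming both completeness and the absence of repetition. This finishes the induction and hence the claim.
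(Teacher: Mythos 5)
Your proposal is correct and takes essentially the same route as the paper's proof: induction along the $k$-tree ordering with the initial clique as base case, using the observation that $v_i \in S_A$ if and only if $|N_b(v_i) \cap S_A| = k - \lambda_t$ to set up a bijective correspondence between traces on the back-neighbor clique and the admissible subsets of each new clique $(N_b(v_i) \setminus \{a\}) \cup \{v_i\}$, and concluding via the count ${k \choose {k-\lambda_t}} + {k \choose {k-\lambda_t+1}}$ that each subset is hit exactly once. The only cosmetic difference is direction: you run the four-way case analysis forward from the source trace, whereas the paper exhibits, for each target subset $B$ of the new clique, its unique preimage $B'$.
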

\noindent\textit{Proof of Claim~\ref{claim:cliques}.}~
The claim clearly holds for the clique induced by $v_1, \dots, v_k$, so it is enough to show that the claim still holds each time we add a new vertex $v_i$, $i \geq k+1$.

Suppose we have just added a vertex $v_i$. Let $N_b(v_i)$ denote the back-neighbors of $v_i$---that is, the neighbors $v_j \in N(v_i)$ for which $j < i$. The set $\{v_i\} \cup N_b(v_i)$ induces a $k+1$ clique, and hence $G[\{v_i\} \cup N_b(v_i)]$ contains $k+1$ $k$-cliques as subgraphs. We assume that the claim already holds for $G[N_b(v_i)]$, so we only need to show that for each $a \in N_b(v_i)$, the claim holds for the clique $K = G[\{v_i\} \cup N_b(v_i) \setminus \{a\}]$.

We will show that for any such subset $B \subseteq V(K)$ of size $k-\lambda_t$ or $k-\lambda_t+1$, there exists a subset $B' \subseteq N_b(v_i)$ of size $k-\lambda_t$ or $k-\lambda_t+1$ such that when $N_b(v_i) \cap S_A = B'$, our rule for constructing $S_A$ extends $B' \cap K$ to $B$ in $K$.

Given a subset $B \subseteq V(K)$ of size $k-\lambda_t$ or $k-\lambda_t+1$:
\begin{itemize}
\item If $|B| = k-\lambda_t$ and $v_i \in B$, then $S_A \cap V(K) = B$ if and only if $B' = B \setminus \{v_i\} \cup \{a\}$.
\item If $|B| = k-\lambda_t+1$ and $v_i \in B$, then $S_A \cap V(K) = B$ if and only if $B' = B \setminus \{v_i\}$.
\item If $|B| = k-\lambda_t$ and $v_i \not \in B$, then $S_A \cap V(K) = B$ if and only if $B' = B \cup \{a\}$.
\item If $|B| = k-\lambda_t+1$ and $v_i \not \in B$, then $S_A \cap V(K) = B$ if and only if $B' = B$.
\end{itemize}
Hence, for each $(k-\lambda_t+1)$ or $(k-\lambda_t)$-subset $B \subseteq V(K)$, there exists a subset $B' \subseteq N_b(v_i)$ of size $k-\lambda_t$ or $k-\lambda_t+1$ such that when $N_b(v_i) \cap S_A = B'$, our rule for constructing $S_A$ extends $B' \cap K$ to $B$ in $K$. Thus we see that for any $k$-clique $K$ of $G$ and for any $(k-\lambda_t+1)$ or $(k-\lambda_t)$-subset $B \subseteq V(K)$, $B$ appears in the set $\{V(K) \cap S_A: A \in \mathcal A\}$, and it must follow that each $B$ appears exactly once. \hfill$\diamondsuit$\medskip

Now, we fix a vertex subset $A \in \mathcal{A}$ and a corresponding set $S_A$, and we partition $G$ into graphs $G_1:= G[S_A]$ and $G_2:=G[V(G) \setminus S_A]$. We claim that $G_1$ is a $(k-\lambda_t)$-tree. Indeed, consider a vertex $v_i \in V(G_1)$. If $i \leq k$ and $|A| = k - \lambda_t$, then $v_i$ belongs to the $(k-\lambda_t)$-clique induced by $A$, which has no back-neighbors. If $i \leq k$ and $|A| = k-\lambda_t+1$, then either $v_i$ belongs to a $(k-\lambda_t)$-clique contained in $A$ with no back-neighbors, or $v_i$ is the highest indexed vertex of $A$, in which case the back-neighbors of $v_i$ induce a clique $A \setminus \{v_i\}$. Otherwise, if $i > k$, then $v_i$ was added to $S_A$ because $v_i$ had exactly $k-\lambda_t$ back-neighbors in $S_A$. Furthermore, as $N_b(v_i)$ induces a clique in $G$, it follows that these $k-\lambda_t$ back-neighbors of $v_i$ form a $(k-\lambda_t)$-clique. Therefore, $G_1$ is a $(k-\lambda_t)$-tree. By a similar argument, $G_2$ is an $(\lambda_t-1)$-tree.

Next, with $S_A$ fixed, we construct a distribution of colorings on $G$. First, we use the induction hypothesis to choose a set $D'$ of $L'$-colorings on $G_1$, where $L'(v) = L(v) \cap (C_1 \cup \dots \cup C_{t-1})$ for each $v \in V(G_1)$.
As $G_2$ is a $(\lambda_t-1)$-tree, for any fixed coloring $\phi \in D'$, we may extend $\phi$ to a set of $\lambda_t!$ colorings on $G$ by giving $G_2$ $\lambda_t!$ list colorings by Proposition \ref{prop:Tree} or Theorem \ref{thm:2tree}. This gives us a total of $\lambda_t!|D'|$ colorings of $G$ for our fixed set $S_A$. Hence, putting together the colorings we obtain from each set $S_A$, we obtain a set $D$ containing $\left ({{k -1} \choose {\lambda_t-1}} + {{k - 1} \choose \lambda_t} \right )\lambda_t!|D'|$ colorings of $G$.

For a vertex $v \in V(G)$, and a color $c \in L(v)$, we count the number of colorings of $D$ in which $c$ appears at $v$. We fix a $k$-clique $K$ to which $v$ belongs. If $c \not \in C_t$, then $c$ is used at $v$ only in those colorings computed from the subsets $S_A$ for which $v \in S_A \cap V(K)$. By Claim \ref{claim:cliques}, there are exactly ${{k -1} \choose {\lambda_t-1}} + {{k - 1} \choose \lambda_t}$ sets $S_A$ for which $v \in S_A \cap V(K)$, and by the induction hypothesis, for each such set $S_A$, we compute exactly $\frac{\lambda_t! |D'|}{k - \lambda_t + 1}$ colorings in which $c$ is used at $v$. Therefore, the number of colorings of $D$ in which $c$ is used at $v$ is equal to $\frac{\lambda_t! |D'|}{k - \lambda_t + 1} \left ({{k -1} \choose {\lambda_t-1}} + {{k - 1} \choose \lambda_t} \right ) = |D'| \cdot \frac{k!}{(k - \lambda_t + 1)!}$. 

Next, for a color $c \in L(v)$ with $c \in C_t$, $c$ is used at $v$ only in those colorings of $D$ computed from subsets $S_A$ for which $v \not \in S_A \cap V(K)$. There are exactly ${k - 1 \choose {\lambda_t-2}} + {k - 1 \choose {\lambda_t-1}}$ such sets $A$ for which $v \not \in S_A \cap V(K)$ (with ${{k - 1} \choose {-1}} = 0)$, and for each such set $S_A$, we compute exactly $(\lambda_t-1)!|D'|$ colorings in which $c$ is used at $v$. Therefore, the number of colorings in which $c$ is used at $v$ is exactly $(\lambda_t-1)!|D'| \left ({k - 1 \choose {\lambda_t-2}} + {k - 1 \choose {\lambda_t-1}} \right ) = |D'| \cdot \frac{k!}{(k - \lambda_t + 1)!}$. Therefore, for each vertex $v \in V(G)$ and each color $c \in L(v)$, $c$ is used at $v$ in exactly $|D'| \cdot \frac{k!}{(k - \lambda_t + 1)!}$ colorings. 

Finally, we show that $G$ has a flexible $L$-coloring. Suppose we choose a random coloring $\phi \in D$ of $G$. For a vertex $v \in V(G)$ and a color $c \in L(v)$, $\phi(v) = c$ with probability 
\[\frac{|D'| \cdot \frac{k!}{(k - \lambda_t + 1)!}}{\lambda_t! |D'| \left ({k \choose {\lambda_t-1}} + {k \choose \lambda_t} \right )} = \frac{1}{k+1}.\]
Hence, by Lemma \ref{lem:Probability}, $G$ is $\frac{1}{k+1}$-flexible with respect to the list assignment $L$.
\end{proof}
}

\sv{The proof is in the full version of the paper.}
We conclude the section by noting that by improving Theorem \ref{thm:2tree}, one can also improve Theorem \ref{thm:lambda_ch}. In particular, suppose we could prove for some $k_0 \geq 3$ and all $k \leq k_0$ that for a $k$-tree $G$ and a list assignment $L$ of $k+1$ colors at each $v \in V(G)$, that there exists a set $\Phi$ of $(k+1)!$ colorings on $G$ such that each color of each list $L(v)$ appears at $v$ exactly $k!$ times in $\Phi$. (This statement for $k = 2$ is exactly the statement of Theorem \ref{thm:2tree}.) Then, we could relax the requirement of Theorem \ref{thm:lambda_ch} to allow parts of $\lambda$ of size at most $k_0$. However, even proving this statement for $k = 3$ seems like a difficult problem.

\section{Graphs of Bounded Treedepth}\label{s:td}
\sv{\toappendix{\section{Additions to Section~\ref{s:td}}\label{sub:td}}}

In this section, we will consider graphs of bounded treedepth. For a rooted tree $T$, we define the \emph{height} of $T$ as the number of vertices in the longest path from the root of $T$ to a leaf of $T$. Then, the \emph{treedepth} \td$(G)$ of a graph $G$ is defined as the minimum height of a rooted tree $T$ for which $G\subseteq \text{Closure}(T)$, where the \emph{closure} of a rooted tree $T$, written $\text{Closure}(T)$, is a graph on $V(T)$ in which each vertex is adjacent to all of its ancestors in $T$ and all of its descendants in $T$.

If \td$(G)= k$, then $G$ is $(k-1)$-degenerate, as each leaf of the corresponding tree has at most $k-1$ ancestors and no descendants.
It follows that such a graph is $k$-choosable, and the complete graph on $k$ vertices shows us that this is best possible.
We will show that graphs of treedepth $k$ are not only $k$-choosable, but weighted $\varepsilon$-flexibly $k$-choosable as well, with $\epsilon = \frac{1}{k}$ in the unweighted case, and $\epsilon = \frac{1}{k^2}$ in the weighted case.
Note that $K_k$ also shows that our value of $\epsilon=\frac{1}{k}$ that we obtain for the unweighted case is the best possible.

\lv{Before we show our main proof, we develop a variant of Lemma~\ref{lem:Probability} that is suited to uniquely weighted requests.
The difference is that this modification allows us to build a different distribution for each request.}
\sv{Before we show our main proof, we develop a variant of Lemma~\ref{lem:Probability} that is suited to uniquely weighted requests and allows us to build a different distribution for each request. With this new lemma, }
\lv{Moreover,}
we can weaken the assumption on $\phi$ where we only require $\Pr[\varphi(v)=c]\ge\varepsilon$ for $c$ such that $w(c,v)\neq 0$.
As we are working with uniquely weighted requests, for each $v\in V(G)$ there is at most one color $c$ that we need to use at $v$ with positive probability.
We often refer to this color $c$ as \emph{the requested color} at $v$.
\sv{The proof of Lemma~\ref{lem:Probability2} is very similar to the original proof of Lemma~\ref{lem:Probability}.
We include the proof in the full version of the paper.}

\begin{lemma}
\label{lem:Probability2}
Let $G$ be a graph with list assignment $L\in \mathcal L$.
Let $R\subseteq V(G)$ be a set of vertices with uniquely weighted requests given by a  weight function $w$.
Suppose that for any weighted request $w$ on $G$, there exists a probability distribution on $L$-colorings $\varphi$ of $G$ such that for every $v\in V(G)$, $c \in L(v)$ such that $w(v,c)\neq 0$,
\[%
\Pr[\varphi(v)=c]\ge\varepsilon.
\]%
Then $G$ is $\varepsilon$-flexibly $\mathcal L$-choosable.
\end{lemma}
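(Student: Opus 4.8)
The plan is to run the same first-moment argument that underlies Lemma~\ref{lem:Probability}, but applied to one request at a time, so that the request-dependent distribution supplied by the hypothesis can be used. Fix a list assignment $L\in\mathcal L$ and an arbitrary uniquely weighted request on $G$, that is, a weight function $w$ whose support at each vertex is a single color. It suffices to show that this single request is $\varepsilon$-satisfiable; since $L$ and the request are arbitrary, and since every (unweighted) request is itself a uniquely weighted request, $\varepsilon$-flexible $\mathcal L$-choosability follows immediately from the definitions.

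First I would invoke the hypothesis on the fixed $w$ to obtain a probability distribution on $L$-colorings $\varphi$ with $\Pr[\varphi(v)=c]\ge\varepsilon$ for every pair $(v,c)$ with $w(v,c)\neq 0$. Writing $c_v$ for the unique requested color at each $v$ in $\mathrm{dom}(w)$ (the only color contributing to the weight at $v$), I would then compute the expected satisfied weight by linearity of expectation:
\[
\mathbb{E}\!\left[\sum_{v\in V(G)} w(v,\varphi(v))\right]
=\sum_{v\in V(G)}\sum_{c\in L(v)} w(v,c)\,\Pr[\varphi(v)=c]
=\sum_{v} w(v,c_v)\,\Pr[\varphi(v)=c_v].
\]
Here the key point, and the only place the \emph{uniquely} weighted hypothesis is used, is that all but one term of the inner sum vanishes, so the expectation collapses to a sum over requested colors, each of which enjoys the bound $\Pr[\varphi(v)=c_v]\ge\varepsilon$. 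Hence the expected satisfied weight is at least $\varepsilon\sum_v w(v,c_v)=\varepsilon\sum_{v,c} w(v,c)$, i.e.\ at least $\varepsilon$ times the total requested weight.

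Finally I would apply the first-moment principle: since the expected satisfied weight is at least $\varepsilon$ times the total weight, at least one coloring $\varphi$ in the support of the distribution achieves satisfied weight at least this much, and this $\varphi$ witnesses that $w$ is $\varepsilon$-satisfiable. In the purely unweighted formulation one sets $w(v,r(v))=1$ on $\mathrm{dom}(r)$ and $w(v,\cdot)=0$ elsewhere, and the identical computation gives $\mathbb{E}[\,|\{v\in\mathrm{dom}(r):\varphi(v)=r(v)\}|\,]\ge\varepsilon|\mathrm{dom}(r)|$, again producing a coloring satisfying at least $\varepsilon|\mathrm{dom}(r)|$ requests.

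I do not expect a genuine obstacle: the argument is a routine expectation computation, identical in spirit to the proof of Lemma~\ref{lem:Probability}. The only things to be careful about are bookkeeping — ensuring the hypothesis is applied to the correct, \emph{request-specific} distribution (this is the whole content of the modification, namely that the distribution may depend on $w$) — and confirming that the uniquely weighted assumption is precisely what lets the probability bound, guaranteed only at requested colors, suffice to lower-bound the full expected weight.
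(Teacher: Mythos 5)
Your proposal is correct and coincides with the paper's own proof: both fix the request, use the request-specific distribution from the hypothesis, compute the expected satisfied weight by linearity of expectation (with the uniquely weighted assumption making all unrequested terms vanish so the $\Pr[\varphi(v)=c]\ge\varepsilon$ bound covers the whole sum), and conclude by the first-moment principle. No gaps.
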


\lv{
The proof is very similar to the original proof of Lemma~\ref{lem:Probability}.
We include it for completeness.
}

\toappendix{
\begin{proof}[Proof of Lemma~\ref{lem:Probability2}]
  Let $G$, $L$, and $w$ be as described in the statement and fixed.
  Let $\phi$ be  chosen at random based on the given probability distribution.
By linearity of expectation:
\[\mathbf{E}\Biggl[\sum_{v\in V(G)} w(v,\phi(v)) \Biggr]=\sum_{\substack{v\in V(G) \\ c\in L(v) \\ w(v,c)\neq 0}}    \Pr[\phi(v)=c]\cdot w(v,c) \ge\varepsilon \cdot \sum_{\substack{v\in V(G) \\c\in L(v)}} w(v,c), 
\]
and thus there exists the required $L$-coloring $\phi$ satisfying at least an $\epsilon$ fraction of the request.
\end{proof}
}

\begin{theorem}\label{thm:td}
Let $G$ be a graph of treedepth $k$. Then $G$ is $\frac{1}{k}$-flexibly $k$-choosable.
In fact, the same holds even when uniquely weighted requsts are considered.
Therefore, $G$ is weighted $\frac{1}{k^2}$-flexibly $k$-choosable.
\end{theorem}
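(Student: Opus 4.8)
The plan is to reduce, via Lemma~\ref{lem:Probability2}, to the purely probabilistic task of exhibiting, for every uniquely weighted request, a distribution on proper $L$-colorings $\varphi$ with $\Pr[\varphi(v)=r(v)]\ge\frac1k$ at every requested vertex $v$ (I write $r(v)$ for the requested color). This immediately delivers the unweighted $\frac1k$-flexibility, and the weighted $\frac1{k^2}$ bound then follows from Observation~\ref{ob:unique}, since $\max_{v}|L(v)|=k$. I would construct such a distribution by induction on the height of a treedepth decomposition, peeling off the root of the tree one vertex at a time.

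To make the induction go through I would prove a strengthened statement: for every rooted forest $T$ of height at most $k$, every graph $H\subseteq\mathrm{Closure}(T)$, every list assignment with $|L(v)|\ge k$ for all $v$, and every uniquely weighted request, there is a distribution on proper $L$-colorings with $\Pr[\varphi(v)=r(v)]\ge\frac1k$ at each requested $v$. The forest formulation is needed because deleting a root disconnects the graph, and the slack $|L(v)|\ge k$ (rather than equality) is needed because deleting a root shrinks the lists of its neighbors. The base case $k=1$ is immediate, as $H$ is then edgeless and each vertex may simply be given its requested color. For the inductive step, let $\rho$ be a root of a tree in $T$; its $H$-neighbors are all descendants of $\rho$ in $T$.

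For the step I would color $\rho$ using a distribution $\mu$ on $L(\rho)$ chosen so that $\mu(c)\le\frac1k$ for \emph{every} color $c$ and $\mu(r(\rho))\ge\frac1k$ when $\rho$ is requested; such a $\mu$ exists precisely because $|L(\rho)|\ge k$. I would then delete $\rho$, remove the sampled color of $\rho$ from the list of each neighbor of $\rho$, and apply the inductive hypothesis independently to each component of $H-\rho$ (each has height at most $k-1$ and lists of size at least $k-1$). The resulting coloring is proper: edges inside a component are handled by the recursion, and every edge at $\rho$ is safe because $\rho$'s color was removed from its neighbors' lists. For a requested descendant $v$, its requested color is deleted from $v$'s list only if $v$ is adjacent to $\rho$ and $\rho$ takes that color, an event of probability at most $\mu(r(v))\le\frac1k$; conditioned on survival, the recursion gives $\Pr[\varphi(v)=r(v)]\ge\frac1{k-1}$, so $\Pr[\varphi(v)=r(v)]\ge\left(1-\tfrac1k\right)\cdot\tfrac1{k-1}=\tfrac1k$. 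The root itself is satisfied with probability $\mu(r(\rho))\ge\frac1k$, completing the induction.

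The main obstacle to anticipate is that one cannot simply satisfy all requests on a single level class (an independent set in $\mathrm{Closure}(T)$) and extend: precoloring such a set can render the rest of the graph uncolorable---for instance, precoloring the two leaves of a path-through-a-root, or the leaves of a star, with different colors can leave the center with no available color. The resolution is the \emph{two-sided} constraint on $\mu$: demanding $\mu(c)\le\frac1k$ uniformly over all colors (not merely on $r(\rho)$) is exactly what reserves enough probability mass for each descendant to recover its own request, while $\mu(r(\rho))\ge\frac1k$ handles the root, and both constraints are simultaneously satisfiable only because the list size is $k$. The two remaining points that require care are verifying the two probabilistic inequalities above and checking that the reduced lists still meet the hypotheses of the strengthened statement; both are routine once the inductive set-up is fixed.
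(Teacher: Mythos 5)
Your proposal is correct and takes essentially the same approach as the paper: induction on treedepth, randomly coloring the root so that every color is used with probability at most $\frac{1}{k}$, recursing on the components of the graph minus the root, and concluding via $\left(1-\frac{1}{k}\right)\cdot\frac{1}{k-1}=\frac{1}{k}$ together with Lemma~\ref{lem:Probability2} and Observation~\ref{ob:unique}. The only difference is bookkeeping: where you keep lists of size at least $k$ and use a non-uniform capped distribution $\mu$ at the root, the paper trims all lists to size exactly $k-1$ after the root step (taking care not to delete requested colors) and colors the root uniformly---the two devices are interchangeable.
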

\begin{proof}
We will prove that $G$ is weighted $\frac{1}{k}$-flexibly $k$-choosable with respect to some arbitrary assignment of uniquely weighted requests. We will inductively construct a coloring distribution on $G$ and then apply Lemma~\ref{lem:Probability2}.
\lv{%

}%
We induct on $k$. When $k = 1$, the statement is trivial.

Suppose that $k > 1$.
Let $G$ be a subgraph of the closure of a tree $T$ of height $k$ and root $v$. 
First, we color $v$ with a color $c \in L(v)$ uniformly at random.
Then, we delete $c$ from all other lists in $G$. 
For any vertex $u \in V(G)$ whose list still has $k$ colors, we arbitrarily delete another color from $L(u)$, taking care not to delete a requested color, if one exists, at $u$.
Now, we obtain a coloring distribution on the remaining vertices of $G$ by the induction hypothesis, which is possible, as each component of $G \setminus v$ is a graph of treedepth $k-1$ with lists of size $k-1$.
For a vertex $w \neq v$ at which a color $a \in L(w)$ is requested, the probability that $a$ is assigned to $w$ is at least the probability that $a$ is not deleted from $L(w)$ multiplied by the probability that $w$ is assigned the color $a$ by the induction hypothesis.
The probability that $a$ is not deleted from $L(w)$ is at least $\frac{k-1}{k}$, as the requested color $a$ can only be deleted from $L(w)$ by using $a$ at $v$. 
The probability that $a$ is used at $L(w)$ by the induction hypothesis is $\frac{1}{k-1}$. 
Therefore, $a$ is used at $w$ with probability at least $\frac{k-1}{k} \cdot \frac{1}{k-1} = \frac{1}{k}$. 
Furthermore, the probability that the requested color $c \in L(v)$ is used at $v$, if such a request exists, is exactly $\frac{1}{k}$. Hence, by  
Lemma~\ref{lem:Probability2}, the proof is complete.
The weighted flexibility statement then follows from Observation~\ref{ob:unique}.
\end{proof}

\section{Flexible Degeneracy Orderings}\label{s:degeneracy}
\sv{\toappendix{\section{Additions to Section~\ref{s:degeneracy}}\label{sub:degeneracy}}}
\lv{Given a graph $G$ and an integer $k$, we define a \emph{$k$-degeneracy order} on $V(G)$ as an ordering $D$ of $V(G)$ such that for each vertex $v \in V(G)$, at most $k$ neighbors of $v$ appear before $v$ in $D$.}
\sv{Recall that a}\lv{A} graph $G$ with a $k$-degeneracy ordering is called $k$-degenerate. 
One of the earliest appearances of graph degeneracy is in a paper by Erd\H{o}s and Hajnal \cite{ErdosColoring}, in which the authors define the \textit{coloring number} $\col(G)$ of a graph $G$ as one more than the minimum integer $k$ for which $G$ is $k$-degenerate. The coloring number of $G$ satisfies
$\ch(G) \leq \col(G)$, and hence an upper bound on a graph's coloring number implies an upper bound on a graph's choosability. \sv{Similarly, the concept of \emph{flexible degeneracy}, defined in the introduction, extends the notion of coloring number to the setting of flexibility.}

\lv{To extend the notion of coloring number to the setting of flexibility,
we say that a graph $G$ is $\epsilon$-flexibly $k$-degenerate if for any subset $R \subseteq V(G)$ of ``requested'' vertices, there exists a $k$-degeneracy ordering $D$ of $V(G)$ such that for at least $\epsilon|R|$ vertices $r \in R$, all neighbors of $r$ appear after $r$ in $D$. Flexible degeneracy is related to flexible choosability by the fact that a graph that is $\epsilon$-flexibly $k$-degenerate is $\epsilon$-flexibly $(k+1)$-choosable.}

 \lv{However, we see immediately that if a graph $G$ has a single vertex $w$ of degree at most $k$, then in any $k$-degenerate ordering of $V(G)$, $w$ must appear as the last vertex. Therefore, if we allow $R = \{w\}$, then $G$ is not $\epsilon$-flexibly $k$-degenerate for any value $\epsilon > 0$.}
 \sv{Suppose we wish to determine if a $k$-degenerate graph is $\epsilon$-flexibly $k$-degenerate for some $\epsilon > 0$. If $G$ has a single vertex $w$ of degree at most $k$, then in any $k$-degenerate ordering of $V(G)$, $w$ must appear as the last vertex. Therefore, if we allow our request $R$ to contain the single vertex $w$, then we see that $G$ is not $\epsilon$-flexibly $k$-degenerate for any value $\epsilon > 0$.}
Thus, in order to avoid this small problem, we give the following definition:

\begin{definition}
\label{def:flexible_degeneracy}
Let $G$ be a graph. For a constant $\epsilon > 0$ and an integer $k \geq 1$, we say that $G$ is \emph{almost $\epsilon$-flexibly $k$-degenerate} if the following holds. Let $R \subseteq V(G)$ be a vertex subset, and if $G$ contains a single vertex $w$ of degree at most $k$, let $R \neq \{w\}$. Then there exists an ordering $D$ on $V(G)$ such that 
\begin{itemize}
    \item For each vertex $v \in V(G)$, at most $k$ neighbors of $v$ appear before $v$ in $D$.
    \item There exist at least $\epsilon |R|$ vertices $r \in R$ for which no neighbor of $r$ appears before $r$ in $D$.
\end{itemize}
\end{definition}

In this section, we will investigate flexible degeneracy in non-regular graphs of maximum degree $\Delta$. We will show that for each $\Delta \geq 3$, there exists a value $\epsilon = \epsilon(\Delta) > 0$ such that if a non-regular graph $G$ of maximum degree $\Delta$ is 3-connected, then $G$ is almost $\epsilon$-flexibly $(\Delta - 1)$-degenerate.

 Given a non-regular graph $G$ of maximum degree $\Delta$ and a set $R$ of requested vertices, the problem of finding a flexible degeneracy ordering on $G$ with respect to $R$ is closely related to the problem of finding a spanning tree on $G$ whose leaves intersect a positive fraction of the vertices in $R$. We formalize this observation with the following lemma.

\begin{lemma}
\label{lem:leaves_may_be_satisfied}
Let $G$ be a non-regular graph of maximum degree $\Delta$ with a vertex $w \in V(G)$ of degree less than $\Delta$. Let $T$ be a spanning tree of $G$, and let $\mathcal I$ be an independent set in $G$ such that $w \not \in \mathcal I$ and every vertex in $\mathcal I$ is a leaf of $T$. Then there exists a $(\Delta - 1)$-degeneracy order $D$ on $V(G)$ in which each vertex $v \in \mathcal I$ appears in $D$ before each neighbor of $v$.
\end{lemma}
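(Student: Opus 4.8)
The plan is to construct the degeneracy ordering $D$ by placing the vertices in three consecutive blocks. First I would place every vertex of $\mathcal I$ at the very front of $D$ in arbitrary order; since $\mathcal I$ is an independent set, no vertex of $\mathcal I$ has a neighbor appearing before it, so the second bullet of Definition~\ref{def:flexible_degeneracy} will be satisfied for all of $\mathcal I$ automatically, and since these vertices are first, each has zero earlier neighbors, vacuously obeying the back-degree bound. The real content is in ordering the remaining vertices $V(G)\setminus\mathcal I$ so that each one sees at most $\Delta-1$ earlier neighbors.

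The key idea for the back of the ordering is to root the spanning tree $T$ at the low-degree vertex $w$ and process $V(G)\setminus\mathcal I$ in order of \emph{decreasing} distance from $w$ in $T$ (equivalently, a reverse BFS/DFS order so that every non-root vertex is placed before its $T$-parent). I would argue the degeneracy bound vertex by vertex. Consider a vertex $v\in V(G)\setminus\mathcal I$ placed in this second block. Its neighbors appearing earlier in $D$ are exactly: all of its neighbors lying in $\mathcal I$, plus those neighbors in $V(G)\setminus\mathcal I$ that are farther from $w$ in $T$ than $v$. The crucial observation is that $v$'s $T$-parent is strictly closer to $w$, hence is processed \emph{after} $v$; so at least one tree-neighbor of $v$ (its parent) lands after $v$ in $D$. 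Since $\deg_G(v)\le\Delta$ and at least one neighbor (the parent) appears later, $v$ has at most $\Delta-1$ earlier neighbors, as required. For the root $w$ itself, which is processed last among the non-$\mathcal I$ vertices, I would use the hypothesis that $\deg_G(w)<\Delta$, i.e. $\deg_G(w)\le\Delta-1$, so even if every neighbor of $w$ precedes it, the bound $\Delta-1$ holds; this is precisely why the lemma requires $w\notin\mathcal I$ and $w$ to be the low-degree vertex.

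The main obstacle I anticipate is making the parent/child argument fully rigorous when a vertex $v\in\mathcal I$ sits at the front, because then $v$'s tree-parent is \emph{not} processed after $v$ — but this causes no problem: vertices of $\mathcal I$ are leaves of $T$, so in the rooted tree they have no children, and being leaves means the only tree-constraint on a later vertex $u$ from $v$ is that $v$ is $u$'s child; since $v\in\mathcal I$ is placed first, it contributes to the earlier-neighbor count of $u$ only as one of $u$'s $\le\Delta$ total neighbors, and $u$'s own parent still appears after $u$. So the ``each non-root, non-$\mathcal I$ vertex has its parent later'' claim must be checked carefully: the parent of such a $u$ is itself either in $\mathcal I$ (impossible, since $\mathcal I$-vertices are leaves and hence never parents) or in $V(G)\setminus\mathcal I$ and strictly closer to $w$, hence later in $D$. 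I would therefore emphasize in the write-up that because $\mathcal I\subseteq$ leaves of $T$, no vertex of $\mathcal I$ is a $T$-parent, which guarantees every $u\in V(G)\setminus\mathcal I$ other than $w$ has its $T$-parent in the second block and strictly after it. Combining the three cases (front block $\mathcal I$, middle block with a guaranteed later parent, and the final root $w$ with $\deg_G(w)\le\Delta-1$) yields the desired $(\Delta-1)$-degeneracy order with every vertex of $\mathcal I$ appearing before all its neighbors.
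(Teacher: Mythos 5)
Your proof is correct and is essentially the paper's argument in a different guise: the paper builds a walk from $w$ that covers $V(G)\setminus\mathcal I$ before $\mathcal I$ and reverses the first-visit order, while you reverse a distance order of $T$ rooted at $w$ with $\mathcal I$ placed up front---in both cases the degeneracy bound comes from the same mechanism, namely that every vertex other than the low-degree root $w$ has a guaranteed later neighbor (the one through which it connects toward $w$), with leafness plus independence of $\mathcal I$ ensuring those vertices can all precede their neighbors. Your explicit check that no vertex of $\mathcal I$ can be a $T$-parent is the same fact the paper invokes as ``$T\setminus\mathcal I$ spans $G\setminus\mathcal I$.''
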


\toappendix{
\sv{\begin{proof}[Proof of Lemma~\ref{lem:leaves_may_be_satisfied}]}
\lv{\begin{proof}}
We construct a walk $W$ on $G$. We let $W$ begin at $w$. We then let $W$ visit every vertex of $V(G) \setminus \mathcal I$, and then we finally let $W$ visit each vertex of $\mathcal I$. This is possible, as $T \setminus \mathcal I$ spans $G \setminus \mathcal I$.

From $W$, we may obtain a degeneracy order $D$ on $V(G)$ as follows. First, we let $D$ be a sequence containing the single vertex $w$. Then, we consider the walk $W$ one step at a time, and each time $W$ visits a new vertex $v \in V(G)$, we add $v$ to the beginning of the sequence $D$. As $W$ visits all vertices of $V(G)$, $D$ gives a degeneracy order on $V(G)$.

We claim that $D$ is a $(\Delta - 1)$-degeneracy order of $V(G)$. To show this, we consider a vertex $v \in V(G)$. The number of neighbors of $v$ that come before $v$ in $D$ is equal to the number of neighbors of $v$ that $W$ visits after $v$. If $v = w$, then $v$ has at most $\Delta - 1$ neighbors, so the number of $v$-neighbors appearing before $v$ in $D$ is clearly at most $\Delta - 1$. If $v \neq w$, then $v$ must have been reached by $W$ by passing through some neighbor of $v$. Therefore, at least one neighbor of $v$ comes after $v$ in the sequence $D$. Then, as the degree of $v$ is at most $\Delta$, at most $\Delta - 1$ neighbors of $v$ come before $v$ in $D$. Therefore, $D$ is a $(\Delta - 1)$-degeneracy order of $V(G)$. Furthermore, as $\mathcal I$ is an independent set, for each vertex $v \in \mathcal I$, $W$ visits $v$ after visiting each neighbor of $v$, so $v$ appears in $D$ before each of its neighbors. This completes the proof.
\end{proof}
}

Lemma \ref{lem:leaves_may_be_satisfied} tells us that given a non-regular graph $G$ of bounded degree, one way to find a flexible degeneracy order on $G$ is to find a spanning tree $T$ on $G$ in which many requested vertices are leaves in $T$. 
The following definition describes essentially how readily a graph $G$ may accommodate an arbitrary set of requested vertices as leaves of a spanning tree on $G$.

\begin{definition}
\label{def:game_connectivity}
Let $G$ be a graph. We define the \textit{game connectivity} $\kappa_g(G)$ of $G$ as follows. Given a spanning tree $T$ of $G$, we let $\Lambda (T)$ represent the set of leaves in $T$. Then, given a vertex subset $R \subseteq V(G)$, we define $l(R)$ to be the maximum value of $\frac{|R \cap \Lambda (T)|}{|R|}$, maximized over all spanning trees $T$ of $G$. Finally, we define
\[%
\kappa_g(G) = \min_{R \subseteq V(G)} l(R).
\]%
\end{definition}

\lv{
We may think of game connectivity in terms of a \emph{Landscaper-Arborist} one-round game, which we define as follows. The game is played on a finite graph $G$ with two players: a Landscaper and an Arborist. 
First, Landscaper chooses a vertex subset $R \subseteq V(G)$. Then, Arborist chooses a spanning tree $T$ on $G$ using as many vertices of $R$ as leaves in $T$ as possible. Arborist then receives a score of $\frac{|R \cap \Lambda (T)| }{ |R|}$. Landscaper's goal is to minimize the score of Arborist, and Arborist's goal is to maximize his score.
}
Previous research considers the problem of finding a spanning tree in a graph with a large fraction of vertices as leaves \cite{Bonsma} \cite{Griggs}, as well as the problem of finding a spanning tree with leaves at prescribed vertices \cite{Egawa}.
\lv{The Landscaper-Arborist game hence is a natural combination of these two ideas. }
\sv{Game connectivity hence is a natural combination of these two ideas.}
Furthermore, similarly to Lemma \ref{lem:leaves_may_be_satisfied}, a degeneracy order on a graph may be obtained from a spanning tree of bounded degree. Hence, we observe that a tree of bounded degree whose leaves make up a large fraction of its vertices gives a notion of weakly flexible degeneracy. 
\lv{The Landscaper-Arborist game may equivalently be described as a \emph{Chooser-Remover game}, played with a Chooser and a Remover, in which Chooser chooses a vertex subset $R \subseteq V(G)$, and then Remover chooses a subset $R' \subseteq R$ such that each vertex of $R'$ has a neighbor in $V(G) \setminus R'$ and such that $G \setminus R'$ is a connected graph. By this equivalent definition, $\kappa_g(G)$ indeed gives a certain game theoretic notion of vertex connectivity.
}Note that game connectivity can also be expressed in the language of connected dominating sets. 
The next lemma shows that if the game connectivity of a non-regular $G$ of maximum degree $\Delta$ is bounded below, then $G$ is almost flexibly $(\Delta-1)$-degenerate.

\begin{lemma}
Let $G$ be a non-regular graph of maximum degree $\Delta \geq 3$. Then $G$ is almost $\frac{1}{2 \Delta} \kappa_g(G)$-flexibly $(\Delta - 1)$-degenerate.
\label{lem:game_connectivity}
\end{lemma}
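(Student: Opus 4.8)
The plan is to reduce everything to Lemma~\ref{lem:leaves_may_be_satisfied}: for any request set $R$ allowed by Definition~\ref{def:flexible_degeneracy}, it suffices to exhibit a spanning tree $T$, a vertex $w$ of degree less than $\Delta$, and an independent set $\mathcal I \subseteq R \setminus \{w\}$ all of whose members are leaves of $T$, with $|\mathcal I| \geq \frac{1}{2\Delta}\kappa_g(G)\,|R|$. Lemma~\ref{lem:leaves_may_be_satisfied} then turns this data into a $(\Delta-1)$-degeneracy order in which every vertex of $\mathcal I$ precedes all of its neighbours, which is exactly the conclusion required by the definition of almost $\frac{1}{2\Delta}\kappa_g(G)$-flexible $(\Delta-1)$-degeneracy. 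So the whole argument is about finding such an $\mathcal I$.

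First I would dispose of the trivial case $\kappa_g(G)=0$: the flexibility proportion is then $0$, and any $(\Delta-1)$-degeneracy order works vacuously, one existing because a non-regular connected graph of maximum degree $\Delta$ is $(\Delta-1)$-degenerate. So assume $\kappa_g(G)>0$. The key preliminary observation is that applying the definition of $\kappa_g$ to singleton requests $R=\{r\}$ gives $l(\{r\})\geq \kappa_g(G)>0$, which forces $l(\{r\})=1$; hence every vertex $r$ is a leaf of some spanning tree, equivalently $G$ has no cut vertex. This lets me realize any single chosen request vertex as a leaf of a spanning tree, which is what will rescue the small-request case.

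For the main case I start from the definition of game connectivity and pick a spanning tree $T$ with $|L|\geq \kappa_g(G)|R|$, where $L = R \cap \Lambda(T)$. The crucial structural point is that $G[L]$ has maximum degree at most $\Delta-1$: each leaf of $T$ has its unique tree-edge running to a non-leaf (two tree-adjacent leaves would force the spanning tree to be a single edge, impossible since $|V(G)|\geq \Delta+1\geq 4$), so each leaf has at most $\Delta-1$ neighbours lying in $L$. A greedy argument then produces an independent set $\mathcal I_0 \subseteq L$ with $|\mathcal I_0|\geq |L|/\Delta \geq \kappa_g(G)|R|/\Delta$. Fixing any $w$ with $\deg(w)<\Delta$ and setting $\mathcal I = \mathcal I_0 \setminus \{w\}$, if $\kappa_g(G)|R|\geq 2\Delta$ then $|\mathcal I|\geq \kappa_g(G)|R|/\Delta - 1 \geq \kappa_g(G)|R|/(2\Delta)$, and Lemma~\ref{lem:leaves_may_be_satisfied} finishes this case. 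Note that the improved bound $|L|/\Delta$ (rather than the crude $|L|/(\Delta+1)$) is what makes the threshold land exactly at $2\Delta$.

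The remaining case $\kappa_g(G)|R|<2\Delta$ is the main obstacle, since here the required count $\frac{1}{2\Delta}\kappa_g(G)|R|$ is strictly less than $1$ and removing $w$ from a tiny independent set could wipe it out entirely. The point is that now it suffices to satisfy a single request: I would choose a request vertex $r$ together with a low-degree vertex $w\neq r$, realize $r$ as a leaf of a spanning tree via the no-cut-vertex observation, and take $\mathcal I=\{r\}$. The only configuration in which no such pair $(r,w)$ exists is when $R$ equals the singleton consisting of the unique vertex of degree less than $\Delta$ — and this is precisely the configuration excluded by the ``almost'' clause of Definition~\ref{def:flexible_degeneracy}. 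Thus the ``almost'' hypothesis is invoked exactly once, to supply a non-$w$ request vertex in the small case, and combining the two cases yields the claimed flexibility proportion of $\frac{1}{2\Delta}\kappa_g(G)$.
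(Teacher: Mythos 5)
Your proposal is correct, and it follows the same pipeline as the paper's proof---apply the definition of $\kappa_g(G)$ to obtain a spanning tree with many requested leaves, thin those leaves to an independent set losing a factor of $\frac{1}{\Delta}$, and invoke Lemma~\ref{lem:leaves_may_be_satisfied}---but your bookkeeping for the exceptional vertex $w$ and the factor of $2$ is genuinely different. The paper handles $w$ upfront and multiplicatively: it sets $R = R_0 \setminus \{w\}$, observing that the ``almost'' clause guarantees either $|R_0| \geq 2$ or that $w$ can be chosen outside $R_0$, so $|R| \geq \frac{1}{2}|R_0|$; it then applies game connectivity to $R$ and extracts an independent subset using $\chi(G) \leq \Delta$ (i.e.\ Brooks' theorem, since $G$ is non-regular), with no case analysis at all. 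You instead pay for $w$ additively ($|\mathcal I_0 \setminus \{w\}| \geq |\mathcal I_0| - 1$) and split at the threshold $\kappa_g(G)|R| \geq 2\Delta$, which forces you to handle small requests separately via the observation that $\kappa_g(G) > 0$ implies $l(\{r\}) = 1$ for every vertex $r$ (equivalently, $G$ has no cut vertex), so a single request can always be realized as a spanning-tree leaf. Your route buys two things: it avoids Brooks' theorem entirely, since your elementary observation that $G[R \cap \Lambda(T)]$ has maximum degree at most $\Delta - 1$ (each leaf's tree edge goes to a non-leaf) already yields the $\frac{1}{\Delta}$ independent-set extraction; and it isolates precisely where the ``almost'' hypothesis enters (supplying a pair $(r,w)$ with $r \in R$, $w \neq r$, $\deg(w) < \Delta$ in the small case). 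The paper's route buys brevity and uniformity. Two trivial loose ends in your write-up, neither a gap: you should note the vacuous case $R = \emptyset$ in the small-request branch (any spanning tree with $\mathcal I = \emptyset$ works in Lemma~\ref{lem:leaves_may_be_satisfied}), and your justification that $T$ is never a single edge only needs $|V(G)| \geq 3$, which follows directly from non-regularity rather than from $|V(G)| \geq \Delta + 1$.
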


\toappendix{
\sv{\begin{proof}[Proof of Lemma~\ref{lem:game_connectivity}]}
\lv{\begin{proof}}
By the hypothesis, $G$ has a vertex $w$ of degree at most $\Delta - 1$. We let $R_0 \subseteq V(G)$. As stated before, we will assume by the definition of almost flexible degeneracy that if every vertex of $V(G) \setminus \{w\}$ is of degree $\Delta$, then $R_0 \neq \{w\}$. Therefore, either $R_0$ contains at least two vertices, or the label $w$ may be chosen to represent vertex of degree at most $\Delta - 1$ that does not belong to $R_0$. In either case, we may define $R = R_0 \setminus \{w\} $, and $|R| \geq \frac{1}{2}|R_0|$ holds.

We may choose some subset $R' \subseteq R$ of vertices and a spanning tree $T$ on $G$ such that each vertex of $R'$ is a leaf in $T$. By the definition of game-connectivity, we may choose $R'$ such that $|R'| \geq \kappa_g(G)|R| \geq \frac{1}{2}\kappa_g(G)|R_0|$. Then, as $\chi(G) \leq \Delta $, we may choose an independent subset of $R'$ of size at least $\frac{1}{2 \Delta}\kappa_g(G)|R_0|$, and then the lemma follows from Lemma \ref{lem:leaves_may_be_satisfied}.
\end{proof}
}

Lemma \ref{lem:game_connectivity} shows that certain classes of graphs with robust game connectivity have flexible degeneracy orderings. However, calculating $\kappa_g(G)$ for an arbitrary graph $G$ does not appear to be an easy problem\sv{.}\lv{,
and naively calculating the value $l(R)$ from Definition \ref{def:game_connectivity} for each vertex subset $R \subseteq V(G)$ would require exponential time.}
Therefore, it will be useful to find general lower bounds for $\kappa_g(G)$ in graphs of bounded degree. 
However, the example in Figure \ref{fig:diamonds} shows that the game connectivity of a graph of bounded degree may be arbitrarily small, even when the graph is regular and of arbitrary degree.
\lv{We will see that by also requiring 3-connectivity in addition to an upper bound on vertex degree, we may obtain a lower bound on a graph's game connectivity.}
\sv{By also requiring 3-connectivity in addition to a bound on vertex degree, we will be able to obtain a lower bound on a graph's game connectivity.} %

\begin{figure}
  \begin{center}
\begin{tikzpicture}
[scale=1.7,auto=left,every node/.style={circle,fill=gray!15}]
\node (a1) at (1.6,0.4) [draw = black] {};
\node (a2) at (1.6,-0.4) [draw = black, fill = black] {};
\node (a3) at (1.8,0) [draw = black] {};
\node (a4) at (1.4,0) [draw = black] {};

\node (b1) at (-1.6,0.4) [fill = white] {};
\node (b2) at (-1.6,-0.4) [fill = white] {};

\node(dots) at (-1.6,0) [fill = white] {$\cdots$};

\node(c1) at (0.49,1.6) [draw = black] {};
\node(c2) at (1.16,1.2) [draw = black, fill = black] {};%1.16,1.2
\node(c3) at (0.7,1.2) [draw = black] {};%0.825,1.41
\node(c4) at (0.95,1.6) [draw = black] {};%1.495,1.62

\node(d1) at (-0.49,1.6) [draw = black, fill = black] {};
\node(d2) at (-1.16,1.2) [draw = black] {};%1.16,1.2
\node(d3) at (-0.7,1.2) [draw = black] {};%0.825,1.41
\node(d4) at (-0.95,1.6) [draw = black] {};%1.495,1.62

\node(e1) at (0.49,-1.6) [draw = black, fill = black] {};
\node(e2) at (1.16,-1.2) [draw = black] {};%1.16,1.2
\node(e3) at (0.7,-1.2) [draw = black] {};%0.825,1.41
\node(e4) at (0.95,-1.6) [draw = black] {};%1.495,1.62

\node(f1) at (-0.49,-1.6) [draw = black] {};
\node(f2) at (-1.16,-1.2) [draw = black, fill = black] {};%1.16,1.2
\node(f3) at (-0.7,-1.2) [draw = black] {};%0.825,1.41
\node(f4) at (-0.95,-1.6) [draw = black] {};%1.495,1.62

\foreach \from/\to in {a1/a3,a1/a4,a3/a4,a3/a2,a4/a2,c2/c3,c2/c4,c3/c4,c3/c1,c4/c1,d2/d3,d2/d4,d3/d4,d3/d1,d4/d1,e2/e3,e2/e4,e3/e4,e3/e1,e4/e1,f2/f3,f2/f4,f3/f4,f3/f1,f4/f1,a1/c2,c1/d1,f1/e1,e2/a2, d2/b1,b2/f2}
    \draw (\from) -- (\to);

\end{tikzpicture}
\end{center}
\caption{The graph $G$ in the figure is an arbitrarily large two-connected $3$-regular graph. If a set $R \subseteq V(G)$ is chosen as shown by the dark vertices in the figure, then there does not exist a constant $\epsilon > 0$ such that $\epsilon|R|$ vertices of $R$ may become leaves of some spanning tree of $G$. For any $k \geq 3$, a similar $k$-regular graph without flexible degeneracy may be constructed from a cycle $C$ by replacing each vertex of $C$ by a clique minus an edge.}
\label{fig:diamonds}
\end{figure}
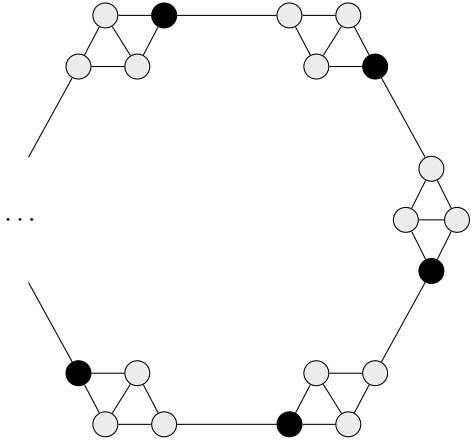

\lv{In the following results, we} \sv{We}
will show that for non-regular 3-connected graphs of maximum degree $\Delta$, there exists a constant $\epsilon= \epsilon(\Delta) > 0$ for which such graphs are almost $\epsilon$-flexibly $(\Delta - 1)$-degenerate.
\sv{ Our proof method may be applied to a more general hypergraph theorem, which we state and prove in the full version. Theorem~\ref{thm:game_connectivity}, and hence Corollary~\ref{cor:degenerate}, follow immediately. 
}%
\lv{Our proof method may be applied to a more general hypergraph problem, which we state in the following lemma. 
}%
\toappendix{
  For an integer $k \geq 0$, we say that a hypergraph $\mathcal H$ is $k$-\emph{edge-connected} if for every nonempty proper vertex subset $U \subsetneq V(\mathcal H)$, there exist at least $k$ edges of $\mathcal H$ containing at least one vertex of $U$ and at least one vertex of $V(\mathcal H) \setminus U$.

\begin{lemma}
\label{lem:hypergraph}
Let $d \geq 2$ be an integer. Then there exists a constant $\epsilon = \epsilon(d)$ such that the following holds.
Let $\mathcal H$ be a $3$-edge-connected hypergraph, and for each edge $e \in E(\mathcal H)$, let $|e| \leq d$. Then $\mathcal H$ has a connected spanning set of at most $(1 - \epsilon)|E(\mathcal H)|$ edges.  Furthermore, we may choose $\epsilon(d)$ to be the solution to the recurrence $\epsilon(2) = \frac{1}{3}$, $  \epsilon(d) = \frac{3\epsilon(\lceil \frac{d}{3} + 1 \rceil)}{d + 3\epsilon(\lceil \frac{d}{3} + 1 \rceil)}$ for $d \geq 3$.
\end{lemma}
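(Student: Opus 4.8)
The plan is to induct on the maximum edge size $d$, with the base case handled by a direct spanning-tree count and the inductive step handled by replacing each large hyperedge with a small connected gadget of smaller edges, recursing, and lifting the resulting sparse connected spanning set back. Throughout I use only that $3$-edge-connectivity of $\mathcal H$ means every cut is crossed by at least three edges, and that a connected spanning set of hyperedges is one covering all vertices and inducing a connected subhypergraph.

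For the base case $d=2$, the hypergraph $\mathcal H$ is an ordinary graph (edges of size $1$ are irrelevant to connectivity, since the cut isolating a single vertex $v$ consists only of the edges of size $2$ through $v$). First I would note that $3$-edge-connectivity forces every vertex to have graph-degree at least $3$, so $\sum_v \deg(v) = \sum_e |e| \geq 3n$, giving $|E(\mathcal H)| \geq \frac{3}{2}n$ and hence $n \leq \frac{2}{3}|E(\mathcal H)|$. A spanning tree is a connected spanning set with exactly $n-1$ edges, so it has at most $\frac{2}{3}|E(\mathcal H)| - 1 \leq (1-\frac{1}{3})|E(\mathcal H)|$ edges, establishing the base case with $\epsilon(2)=\frac{1}{3}$.

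For the inductive step, fix $d \geq 3$ and set $d' = \lceil \frac{d}{3}+1 \rceil$. The idea is to build an auxiliary hypergraph $\mathcal H'$ on the same vertex set $V(\mathcal H)$ by replacing each edge $e \in E(\mathcal H)$ with a connected \emph{gadget} of at most three hyperedges of size at most $d'$ whose union is $e$: partition the (at most $d$) vertices of $e$ into at most three blocks and chain consecutive blocks together, which is possible precisely because $d' \geq \frac{d}{3}+1$. Then $V(\mathcal H')=V(\mathcal H)$ and $|E(\mathcal H')| \leq 3|E(\mathcal H)|$. Two facts need checking. First, $\mathcal H'$ is again $3$-edge-connected: for any nonempty proper $U \subsetneq V(\mathcal H)$, every edge $e$ of $\mathcal H$ crossing $U$ contributes at least one gadget edge crossing $U$ (its gadget is connected and spans $e$), so the three crossing edges of $\mathcal H$ yield at least three crossing edges of $\mathcal H'$. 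Second, all edges of $\mathcal H'$ have size at most $d'$ by construction. Hence the induction hypothesis yields a connected spanning set $F' \subseteq E(\mathcal H')$ with $|F'| \leq (1-\epsilon(d'))|E(\mathcal H')|$, equivalently with at least $\epsilon(d')|E(\mathcal H')| \geq 3\epsilon(d')|E(\mathcal H)|$ gadget edges deleted.

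To finish I would lift $F'$ to $\mathcal H$ by retaining an original edge $e$ exactly when at least one edge of its gadget lies in $F'$, and deleting $e$ only when its entire gadget is deleted. The retained set $F$ is spanning (every vertex is covered by a gadget edge of $F'$, hence by the retained parent edge) and connected (any path passing through a retained gadget can be rerouted through the single original edge $e$, which spans a superset of that gadget's vertices), so $|F|$ equals the number of gadgets meeting $F'$. The main obstacle, and the only delicate point, is the counting: a priori the $\geq 3\epsilon(d')|E(\mathcal H)|$ deleted gadget edges could be dispersed so that no gadget is deleted in full, giving no deletions in $\mathcal H$. The plan here is to choose $F'$ greedily among the connected spanning sets guaranteed by induction, repeatedly rerouting a retained gadget edge around its gadget using the $3$-edge-connectivity of $\mathcal H'$ so as to consolidate deletions into entire gadgets, and then to charge the deleted gadget edges to fully-deleted gadgets. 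The factor $d$ in the denominator enters exactly here: dropping one original edge frees up to $d$ vertices that must be reconnected, so in the worst case only a $\frac{3}{d+3\epsilon(d')}$ proportion of the deletions can be forced into full-gadget form, which is what yields $\epsilon(d)=\frac{3\epsilon(d')}{d+3\epsilon(d')}$. I expect the technical heart to be verifying that this greedy consolidation never gets stuck before reaching the claimed fraction; the remaining items — the spanning/connectivity checks above and the routine observation that the recurrence has a positive solution for every $d$ (immediate by induction, since $\epsilon(d')>0$ forces $\epsilon(d)>0$) — are bookkeeping.
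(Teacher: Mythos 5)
Your base case matches the paper's exactly, but your inductive step has a genuine gap at precisely the point you flag yourself: the consolidation of dispersed deletions into whole gadgets. After splitting each edge into a chained gadget and applying induction to $\mathcal H'$, the guaranteed $\epsilon(d')\,|E(\mathcal H')|$ deleted gadget edges can indeed be spread one-per-gadget, in which case your lifting rule retains every original edge and the construction saves nothing. Your proposed fix --- ``repeatedly rerouting a retained gadget edge around its gadget using the $3$-edge-connectivity of $\mathcal H'$'' --- is not an argument: $3$-edge-connectivity gives no mechanism for exchanging a retained gadget edge for a deleted one elsewhere while keeping $F'$ spanning and connected and without increasing its size, and nothing in the proposal derives the claimed quantitative conclusion that a $\frac{3}{d+3\epsilon(d')}$ proportion of the deletions can be forced into full-gadget form; the heuristic that ``dropping one original edge frees up to $d$ vertices that must be reconnected'' reverse-engineers the recurrence rather than proving it. (There is also a small slip in the counting you do carry out: since $|E(\mathcal H')|\le 3|E(\mathcal H)|$, the inequality $\epsilon(d')|E(\mathcal H')|\ge 3\epsilon(d')|E(\mathcal H)|$ is backwards unless you pad every gadget to exactly three edges.)

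The paper avoids this difficulty entirely by never splitting edges. It greedily selects a set $A_1$ of \emph{original} edges, each of which merges at least $\lceil \frac{d}{3}+2\rceil$ components of $(V,A_1)$, so that when the process stops every remaining edge meets at most $\lceil \frac{d}{3}+1\rceil$ components; contracting the components of $(V,A_1)$ then yields a hypergraph $\mathcal H'$ whose edges are literally the edges of $E\setminus A_1$, now of size at most $\lceil\frac{d}{3}+1\rceil$. Induction applied to $\mathcal H'$ avoids $\epsilon'|E(\mathcal H')|$ edges, and un-contracting lifts these avoided edges back one-to-one, so no consolidation problem ever arises. The factor $d$ in the denominator of the recurrence comes not from any reconnection cost but from the cut around a single vertex: each vertex lies in at least $3$ edges and each edge covers at most $d$ vertices, so $|E|\ge \frac{3}{d}n$, which is what is used in the complementary case where $|A_1|$ is large (there one simply connects the at most $n-(\frac{d}{3}+1)|A_1|$ remaining components with that many edges and counts). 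Until you can actually prove your consolidation step --- and I see no route to it that does not essentially redo the paper's component-merging analysis --- the proposal does not establish the lemma.
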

}

\toappendix{
\lv{\begin{proof}}
\sv{\begin{proof}[Proof of Lemma~\ref{lem:hypergraph}]}
We induct on $d$. We let $V = V(\mathcal H)$ and $E = E(\mathcal H)$. We let $n = |V|$.

For $d = 2$, $\mathcal H$ is a graph of minimum degree $3$ with possible loops. As $\mathcal H$ is $3$-edge-connected, $|E| \geq \frac{3}{2}n$, and there exists a spanning tree $T$ on $V$ consisting of $n-1$ edges. Therefore, we may find a connected spanning set of edges on $\mathcal H$ using fewer than $\frac{2}{3}|E|$ edges, and hence the lemma holds for $d=2$ with $\epsilon(2) = \frac{1}{3}$.

Next, suppose that $d \geq 3$. We will build a set $A \subseteq E$ such that $(V,A)$ is a connected hypergraph. We will construct $A$ as a union of two parts $A_1, A_2$, and we will begin with $A_1 = \emptyset$. Note that initially, $(V,A_1)$ has $n$ components. We will construct $A_1$ through the following process. If there exists an edge $e \in E$ that intersects at least $\lceil \frac{d}{3} + 2 \rceil$ components of $(V,A_1)$, then we add $e$ to $A_1$. We repeat this process until no new edges of $E$ can be added to $A$ in this way. Let $c$ be the number of components of $(V,A_1)$ at this point. As each edge added to $A_1$ reduces the number of components of $(V,A_1)$ by at least $\frac{d}{3} + 1$, it follows that $c \leq n - (\frac{d}{3} + 1)|A_1|.$

We will write $k = \frac{3}{d}$, so that $|E| \geq kn$, and $c \leq n - \frac{1}{k}(1 + k)|A_1|$. We will consider two cases. We will show that in both cases, the lemma holds with $\epsilon = \epsilon(d) = \frac{3 \epsilon'}{d + 3 \epsilon'}$, where $\epsilon' = \epsilon(\lceil \frac{d}{3} + 1 \rceil)$. Thus, we will show inductively that $\epsilon$ is a positive constant depending only on $d$.

For the first case, suppose that $|A_1| = \alpha k n $ for some value $\alpha \geq \frac{k\epsilon' - k + 1}{1 + k \epsilon '}$ if $d \geq 4$ or that $\alpha \geq \frac{1}{4}$ if $d = 3$. Then, as $\mathcal H$ is a connected hypergraph, we may choose a set $A_2$ of $c-1$ edges such that $A = A_1 \cup A_2$ is connected.

We estimate the proportion of edges of $E$ not included in $A$. This proportion is 
\[\epsilon = \frac{|E| - |A_1| - |A_2|}{|E|}.\]
We apply the inequalities $|A_2| < c$ and $|E| \geq kn$.
\[\epsilon >  \frac{kn - |A_1| - c}{kn}.\]
Next, we apply the inequality $c \leq n - \frac{1}{k}(1 + k)|A_1|$.
\[\epsilon > \frac{kn - |A_1| - n + \frac{1}{k}(1 + k)|A_1|}{kn} = \frac{n(k - 1) + \frac{1}{k}|A_1|}{kn}.\]
Next, we apply the equation $|A_1| = \alpha k n$.
\[\epsilon > \frac{(k - 1) + \alpha}{k}.\]
When $d = 3$, we have $k = 1$, which immediately tells us that $\epsilon > \frac{1}{4}$, in which case the lemma holds; otherwise, we assume that $d \geq 4$.
As $\alpha \geq \frac{k\epsilon' - k + 1}{1 + k \epsilon '}$, we have that 
\begin{eqnarray*}
\epsilon &>& \frac{k- 1+\alpha}{k} \geq \frac{k-1}{k} +  \frac{\epsilon' - 1 + \frac{1}{k}}{1 + k \epsilon '} \\
& =& \frac{ (1 + \frac{3}{d}\epsilon')(1 - \frac{d}{3}) + \epsilon' - 1 + \frac{d}{3}}{1 + \frac{3}{d} \epsilon'}
= \frac{3 \epsilon'}{d + 3 \epsilon'}.
\end{eqnarray*}
This completes the first case.

Suppose, on the other hand, that $|A_1| < \frac{k\epsilon' - k + 1}{1 + k \epsilon '} \cdot kn$ if $d \geq 4$, or that $|A_1| < \frac{1}{4}kn$ if $d = 3$. Then, for any edge $e \in E \setminus A_1$, $e$ intersects at most $\lceil \frac{d}{3} + 1 \rceil$ distinct components of $(V,A_1)$. Therefore, if we contract all components of $(V,A_1)$ to single vertices, we obtain a hypergraph $\mathcal H'$ that satisfies the statement of the lemma but with $d$ replaced by $\lceil \frac{d}{3} + 1 \rceil < d$. (We allow the resulting hypergraph $\mathcal H'$ to have edges containing a single vertex, as well as multiple edges.) We know that \[|E(\mathcal H')| = |E| - |A_1| > |E| - \frac{k\epsilon' - k + 1}{1 + k \epsilon '} \cdot kn \geq \frac{k}{1 + k \epsilon '}|E|\] (or $|E(\mathcal H')| > \frac{3}{4} |E|$ when $d = 3$). Furthermore, by the induction hypothesis, we may find a connected set of spanning edges $A'$ on $\mathcal H'$ that avoids at least $\epsilon'|E(\mathcal H')|$ edges. Therefore, we may obtain $\mathcal H$ from $\mathcal H'$ by ``un-contracting'' edges, and we hence may extend $A'$ to a connected spanning set $A \subseteq E$ of edges on $\mathcal H$ that avoids at least $\epsilon' \cdot \frac{k}{1 + k \epsilon '}|E| =\frac{3 \epsilon'}{d + 3 \epsilon'}|E| $ edges (or $\frac{3}{4}\epsilon'|E|$ edges when $d = 3$). This completes the second and final case.
\end{proof}
}

\lv{
With Lemma \ref{lem:hypergraph}, we may give a lower bound for the game connectivity of 3-connected graphs of bounded degree, which will ultimately establish an almost flexible degeneracy result for these graphs.}

\begin{theorem}
\label{thm:game_connectivity}
Let $\Delta \geq 3$ be an integer. Then there exists a value $\epsilon = \epsilon(\Delta) > 0$ for which the following holds. Let $G$ be a $3$-connected graph of maximum degree $\Delta$. Then $\kappa_g(G) \geq \epsilon$. 
\end{theorem}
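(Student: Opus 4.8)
The plan is to reduce the statement to Lemma~\ref{lem:hypergraph} by encoding the ``non-requested'' part of $G$ as the vertices of a hypergraph whose edges are the requested vertices, so that a connected spanning edge set of few edges translates into a spanning tree of $G$ with many leaves in $R$. Throughout I would use the reformulation implicit in Definition~\ref{def:game_connectivity}: to bound $l(R)$ from below it suffices to exhibit, for an arbitrary $R\subseteq V(G)$, a single spanning tree $T$ in which at least $\epsilon|R|$ vertices of $R$ are leaves.

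First I would reduce to the case that $R$ is independent. Since $G$ has maximum degree $\Delta$, a greedy coloring gives $\chi(G)\le\Delta+1$, so $R$ contains an independent subset $R_0$ with $|R_0|\ge |R|/(\Delta+1)$. Any spanning tree making at least $\epsilon(\Delta)|R_0|$ vertices of $R_0$ into leaves also makes that many vertices of $R$ into leaves, yielding $l(R)\ge \epsilon(\Delta)/(\Delta+1)$; as $R$ is arbitrary this proves the theorem with $\epsilon=\epsilon(\Delta)/(\Delta+1)$. With $R_0$ independent, I would build a hypergraph $\mathcal H$ whose vertices are the connected components $C_1,\dots,C_m$ of $G-R_0$ and whose edges are the vertices of $R_0$: for each $r\in R_0$ let $e_r$ be the set of components containing a neighbor of $r$. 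Because $R_0$ is independent, every neighbor of $r$ lies in some component, so $1\le |e_r|\le \deg(r)\le\Delta$, and every edge has size at most $d:=\Delta$. The key claim is that $\mathcal H$ is $3$-edge-connected: for a proper nonempty set $U$ of components, the vertices $r\in R_0$ whose edge meets both $U$ and its complement form a vertex cut of $G$ separating $\bigcup U$ from the remaining components, because $R_0$ is independent and hence no edge of $G$ crosses the partition except through such an $r$; by $3$-connectivity there must be at least three of them. (If $m=1$ the claim is trivial, and in that case every vertex of $R_0$ can be made a leaf directly.)

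Next I would apply Lemma~\ref{lem:hypergraph} with $d=\Delta$ to obtain a connected spanning edge set $A\subseteq E(\mathcal H)$ with $|A|\le (1-\epsilon(\Delta))|R_0|$, where $|E(\mathcal H)|=|R_0|$. Put $K=\{r\in R_0 : e_r\in A\}$ and $S=(V(G)\setminus R_0)\cup K$. The spanning property of $A$ says that every component is met by some kept vertex, and the connectivity of $(V(\mathcal H),A)$ says these kept vertices link all components into one piece; together these translate exactly into $G[S]$ being connected. Setting $R_0'=R_0\setminus K$ gives $|R_0'|\ge \epsilon(\Delta)|R_0|$, and since each vertex of $R_0'$ has all of its neighbors in $V(G)\setminus R_0\subseteq S$, I can take a spanning tree of $G[S]$ and attach each vertex of $R_0'$ as a pendant leaf, obtaining a spanning tree of $G$ in which all of $R_0'$ are leaves. (This is precisely the connected-dominating-set viewpoint mentioned after Definition~\ref{def:game_connectivity}, and feeds directly into Lemma~\ref{lem:leaves_may_be_satisfied} when deriving the degeneracy corollary.)

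The main obstacle I expect is the $3$-edge-connectivity claim together with getting the dictionary between the two ``connected spanning'' notions exactly right: components correspond to hypergraph vertices, requested vertices to hyperedges, and kept connectors to chosen edges, and one must verify that dropping an edge of $\mathcal H$ genuinely corresponds to freeing a requested vertex to be a leaf. Once this correspondence is set up, all the quantitative work is carried out by Lemma~\ref{lem:hypergraph}. A secondary point to check is that passing to the independent set $R_0$ costs only a factor of $\Delta+1$, which is harmless because we only need the existence of some positive constant $\epsilon=\epsilon(\Delta)$.
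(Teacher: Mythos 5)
Your proposal is correct and follows essentially the same route as the paper's proof: pass to an independent subset $R_0$ of $R$ losing a factor $\Delta+1$, form the hypergraph whose vertices are the components of $G-R_0$ and whose hyperedges (of size at most $\Delta$) are the vertices of $R_0$, verify $3$-edge-connectivity from the $3$-connectivity of $G$ via the crossing-vertices-form-a-cut argument, apply Lemma~\ref{lem:hypergraph}, and convert the spared hyperedges back into requested vertices attached as pendant leaves of a spanning tree. Your dictionary between connected spanning edge sets in $\mathcal H$ and connected subgraphs $G[S]$ is spelled out slightly more explicitly than in the paper, but the argument is the same.
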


\begin{proof}
We choose an independent set $R' \subseteq R$ satisfying $|R'| \geq \frac{1}{\Delta + 1}|R|$. We construct a hypergraph $\mathcal H$ from $G$ and $R'$. We let $V(\mathcal H)$ be given by the components of $G \setminus R'$, and we let $E(\mathcal H)$ be given by $R'$. For a vertex $r \in R'$, we say that the corresponding edge $r \in E(\mathcal H)$ contains the vertices corresponding to the components to which $r$ is adjacent in $G \setminus R'$.

We claim that $\mathcal H$ is $3$-edge-connected. Indeed, consider a nonempty proper vertex subset $S \subsetneq V(\mathcal H)$, and consider the subgraph $G' \subseteq G$ containing the components of $S$ along with the vertices of $R'$ all of whose neighbors are in $S$. As $S \neq V(\mathcal H)$, $S$ must avoid some component in $G$. Therefore, as $S$ does not contain all components of $G \setminus R'$, by the $3$-connectivity of $G$, $G'$ must have at least three adjacent vertices of $R'$ that in turn are adjacent to components of $G \setminus R'$ not included in $G'$. These vertices of $R'$ correspond to at least three hyperedges of $\mathcal H$ that contain both a vertex of $S$ and a vertex not belonging to $S$. Furthermore, as $G$ has maximum degree $\Delta$, no vertex of $R'$ is adjacent to more than $\Delta$ components of $G \setminus R'$, and hence no hyperedge of $H$ contains more than $\Delta$ vertices. Thus we see that $\mathcal H$ satisfies the properties of the Lemma \ref{lem:hypergraph} with $\Delta$ as $d$.

By Lemma \ref{lem:hypergraph}, there exists a value $\epsilon = \epsilon(\Delta) > 0$ for which we may find some set of at most $(1 - \epsilon)|E(\mathcal H)| = (1 - \epsilon)|R'|$ hyperedges on $\mathcal H$ that give a connected spanning structure on $\mathcal H$. Therefore, we may choose a set $R^+ \subseteq R'$ of size at most $(1 - \epsilon)|R'|$ vertices such that $G \setminus (R' \setminus R^+)$ is connected. We will write $R'' = R' \setminus R^+$. We then may choose a spanning tree $T$ on $G \setminus R''$, and we may extend $T$ to all of $V(G)$ by letting the vertices of $R''$ be leaves of $T$. In this way, the leaves of our tree $T$ intersect $R'$ in at least $|R''|\geq \epsilon|R'|$ vertices. As $|R'| \geq \frac{1}{\Delta+ 1}|R|$, the leaves of $T$ intersect $R$ in at least $\frac{\epsilon}{\Delta+ 1}|R|$ vertices, and $\frac{\epsilon}{\Delta+ 1}$ is a positive constant dependent only on $\Delta$. This completes the proof.
\end{proof}

\begin{corollary}
\label{cor:degenerate}
Let $\Delta \geq 3$ be an integer. 
Then there exists a value $\epsilon(\Delta) > 0$
for which the following holds. Let $G$ be a $3$-connected non-regular graph of maximum degree $\Delta$.
Then $G$ is almost $\epsilon$-flexibly $(\Delta-1)$-degenerate.
\end{corollary}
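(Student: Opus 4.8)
The plan is to derive this corollary directly by composing the two preceding results, Lemma~\ref{lem:game_connectivity} and Theorem~\ref{thm:game_connectivity}, since both of their hypotheses are satisfied by any $3$-connected non-regular graph $G$ of maximum degree $\Delta \geq 3$.

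First I would invoke Theorem~\ref{thm:game_connectivity}: because $G$ is $3$-connected with maximum degree $\Delta$, there is a constant $\epsilon' = \epsilon'(\Delta) > 0$, depending only on $\Delta$, such that $\kappa_g(G) \geq \epsilon'$. Note that $3$-connectivity is essential in this step---the example in Figure~\ref{fig:diamonds} shows that without it the game connectivity can be made arbitrarily small even for regular graphs of fixed degree, so no such uniform lower bound on $\kappa_g$ could otherwise hold.

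Next I would invoke Lemma~\ref{lem:game_connectivity}: since $G$ is non-regular of maximum degree $\Delta \geq 3$, it is almost $\frac{1}{2\Delta}\kappa_g(G)$-flexibly $(\Delta-1)$-degenerate. Combining this with the lower bound $\kappa_g(G) \geq \epsilon'$ yields that $G$ is almost $\frac{\epsilon'}{2\Delta}$-flexibly $(\Delta-1)$-degenerate. Setting $\epsilon = \epsilon(\Delta) = \frac{\epsilon'(\Delta)}{2\Delta}$, a positive constant depending only on $\Delta$, completes the argument.

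There is essentially no obstacle in this final assembly step; the substantive work lies entirely upstream. Specifically, it resides in Theorem~\ref{thm:game_connectivity}, whose proof reduces the game-connectivity bound to the hypergraph statement of Lemma~\ref{lem:hypergraph}, and in Lemma~\ref{lem:game_connectivity}, which converts a spanning tree with many requested leaves (via Lemma~\ref{lem:leaves_may_be_satisfied}) into the desired degeneracy ordering. The only point requiring minor care here is the bookkeeping of how the constants depend on $\Delta$, to confirm that the resulting $\epsilon$ is a genuine positive constant determined solely by $\Delta$ and not by $G$ itself.
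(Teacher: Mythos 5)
Your proposal is correct and matches the paper's own argument, which likewise deduces Corollary~\ref{cor:degenerate} immediately by combining Theorem~\ref{thm:game_connectivity} (the lower bound $\kappa_g(G)\geq\epsilon'(\Delta)$ for $3$-connected graphs of maximum degree $\Delta$) with Lemma~\ref{lem:game_connectivity} (almost $\frac{1}{2\Delta}\kappa_g(G)$-flexible $(\Delta-1)$-degeneracy for non-regular graphs). Your explicit constant $\epsilon=\frac{\epsilon'(\Delta)}{2\Delta}$ and the observation that it depends only on $\Delta$ are exactly the bookkeeping the paper leaves implicit.
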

\lv{\begin{proof}
The statement follows immediately from Theorem \ref{thm:game_connectivity} and Lemma \ref{lem:game_connectivity}.
\end{proof}
}

\lv{The graph in Figure \ref{fig:diamonds} shows that the 3-connectivity condition of Theorem \ref{thm:game_connectivity} may not be replaced by two-connectivity. We note, however, that the 3-connectivity requirement of Theorem \ref{thm:game_connectivity} may be weakened by instead requiring that every subset $S \subseteq V(G)$ of at least $k$ vertices have at least three vertices in its boundary, for some $k \geq 1$.
To prove that this weakened condition is sufficient, we may use the proof of Theorem \ref{thm:game_connectivity} exactly as it is written, with the exception that we choose $R' \subseteq R$ of size at least $\frac{1}{(\Delta+1)^{k+1}}|R|$ so that the vertices of $R'$ have a mutual distance of at least $k + 2$. In this way, each component of $G \setminus R'$ must have size of at least $k$, and then the rest of the proof may be applied as normal.
This weakened connectivity condition gives a constant $\epsilon$ that depends only on $\Delta$ and $k$. Furthermore, it is not difficult to show that Theorem \ref{thm:game_connectivity} also holds for non-regular 3-connected graphs of bounded degree whose edges have been subdivided at most $t$ times, for some $t \geq 0$. In this case, we obtain a constant $\epsilon$ depending only on $\Delta$ and $t$.}%
\lv{%

}%
\sv{The graph in Figure \ref{fig:diamonds} shows that the 3-connectivity condition of Theorem \ref{thm:game_connectivity} may not be replaced by 2-connectivity. }%
Furthermore, the following example shows that the bounded degree condition of Theorem \ref{thm:game_connectivity} may not be removed. Consider a graph $G$ with an independent subset $R \subseteq V(G)$ satisfying the following properties. 
For each triplet $A \in {R \choose 3}$, let there exist a vertex of $G \setminus R$ whose neighbors of $R$ are exactly those vertices from $A$.
Figure \ref{fig:counterexample} shows such a construction with $|R| = 5$.
It is straightforward to show that when $|R| \geq 4$, $G$ is 3-connected. However, no more than two vertices of $R$ may be removed from $G$ without disconnecting $G$. Therefore, for any spanning tree $T$ on $G$, the leaves of $T$ include at most two vertices of $R$. As $R$ may be arbitrarily large, this example shows that 3-connected graphs $G$ do not satisfy $\kappa_g(G) \geq \epsilon$ for any universal $\epsilon > 0$.

\begin{figure}
  \begin{center}
\begin{tikzpicture}
[scale=1.5,auto=left,every node/.style={circle,fill=gray!15}]
\node (r1) at (0,0) [draw = black, fill = black] {};
\node (r2) at (1,0) [draw = black, fill = black] {};
\node (r3) at (2,0) [draw = black, fill = black] {};
\node (r4) at (3,0) [draw = black, fill = black] {};
\node (r5) at (4,0) [draw = black, fill = black] {};
\node (u1) at (0,1) [draw = black, fill = gray!15] {};
\node (u2) at (1,1) [draw = black, fill = gray!15] {};
\node (u3) at (2,1) [draw = black, fill = gray!15] {};
\node (u4) at (3,1) [draw = black, fill = gray!15] {};
\node (u5) at (4,1) [draw = black, fill = gray!15] {};
\node (d1) at (0,-1) [draw = black, fill = gray!15] {};
\node (d2) at (1,-1) [draw = black, fill = gray!15] {};
\node (d3) at (2,-1) [draw = black, fill = gray!15] {};
\node (d4) at (3,-1) [draw = black, fill = gray!15] {};
\node (d5) at (4,-1) [draw = black, fill = gray!15] {};

\foreach \from/\to in {u1/r1,u1/r2,u1/r3,d1/r1,d1/r2,d1/r4,u2/r1,u2/r2,u2/r5,d2/r1,d2/r3,d2/r4,u3/r1,u3/r3,u3/r5,d3/r1,d3/r4,d3/r5,u4/r2,u4/r3,u4/r4,d4/r2,d4/r3,d4/r5,u5/r2,u5/r4,u5/r5,d5/r3,d5/r4,d5/r5}
    \draw (\from) -- (\to);

\end{tikzpicture}
\end{center}
\caption{The figure shows a 3-connected graph $G$ in which removing any three light vertices disconnects $G$. For each triplet $u,v,w$ of light vertices in $G$, there exists a dark vertex whose neighbors are exactly $u,v,w$.}
\label{fig:counterexample}
\end{figure}
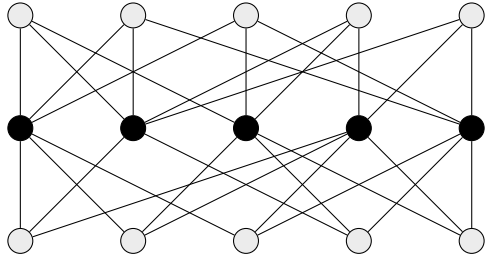

\section{Conclusion}

In Section \ref{s:maxdeg}, we provide a characterization of flexibility in terms of the maximum degree of a graph.
Moreover, we prove a more general theorem (Theorem~\ref{thm:weaker_lists_new}), which somewhat resembles a famous theorem of Erd\H{o}s, Rubin, and Taylor \cite{ErdosDlist}.
It would be very interesting to discover whether a complete characterization of flexible degree choosability exists and, moreover, how closely such a characterization would resemble Erd\H{o}s, Rubin, and Taylor's characterization of degree choosability.
Perhaps, some more structural insight might be needed in order to find such a characterization, as hinted by Figure~\ref{fig:weak_diamong}.

As we present tight bounds for list sizes needed for flexibility in graphs of bounded treedepth and graphs of treewidth 2, a natural question arises:
Is it possible to show similar bounds for graphs of bounded pathwidth?
More specifically, one can focus on the even more restricted class of (unit) interval graphs with bounded clique size.
However, even for such a restricted graph class it seems to be challenging to show a tight bound on the list size needed for flexibility.
In particular, the $k$-path seems to be a challenging example.

Another interesting direction could be a systematic search for graphs that are not flexibly choosable.
In this paper we give quite simple examples (Theorem~\ref{thm:maxdeg2} and Figure~\ref{fig:weak_diamong})
of graphs that are not flexible for the stronger reason that they do not allow a precoloring extension. These examples are not surprising, as many examples exist of graphs that do not allow a precoloring extension~\cite{Tuza1,Tuza3, Marx06, Tuza2002}. It would be interesting to find some constructions that prohibit flexibility while allowing precoloring extension. 

We finally propose an independent study of game connectivity (Definition~\ref{def:game_connectivity}), which might have interesting properties in other contexts.
To the best of our knowledge, an equivalent definition of this concept has not been studied yet in the literature.

\bigskip
\noindent\textbf{Acknowledgements.}~We thank Bojan Mohar for suggesting that we further improve our original bound of $\frac{1}{2\Delta^3}$ in Theorem~\ref{thm:maxdeg3}. 

\raggedright
\bibliographystyle{plainurl}
\bibliography{bib}

\end{document}